\voffset=-1.6cm
\hoffset=-2.6cm
\documentclass[a4paper,12pt,reqno]{amsart} 
\usepackage{amssymb,amsmath,amscd,epsf,xypic,amsxtra}

\setlength{\textwidth}{7in}
\usepackage[pdftex]{graphicx}
\usepackage[all,knot,curve]{xy}

\def\B{\Cal B}
\def\P{\mathbb P}
\def\C{\mathbb C}
\def\h{\mathbb H}
 
\def\n{\mathbb N} 
\def\N{\mathbb N} 
\def\Q{\mathbb Q}
\def\r{{\mathbb R}}
\def\R{\mathbb R} 
\def\T{\mathbb T} 
\def\z{\mathbb Z} 
\def\Z{\mathbb Z} 

\def\I{\mathbb I} 
\def\c{\mathtt c}

\def\A{\mathcal A}

\def\ho{H\"older\ }
\def\bx{\qed}
\def\ep{\varepsilon}
\def\wt{\widetilde}
\def\wh{\widehat}
\def\bbar{\overline}
\def\goto{\rightarrow}

\def\Cal{\mathcal}
\def\cal{\mathcal}

\def\d{\text{d}}
\def\dd{\text{\rm d}}
\def\xx{x}
\def\sbar{\underline }

\setcounter{page}{1}

\renewcommand{\theequation}{\thesection.\arabic{equation}}  

\newtheorem{theo}{Theorem}[section] 

\newtheorem{lem}{Lemma}[section]

\newtheorem{cor}{Corollary}[section]

\newtheorem{prop}{Proposition}[section]

\newtheorem{defi}{Definition}[section]

\newcommand{\eqnsection}{

\renewcommand{\theequation}{\thesection.\arabic{equation}}
    \makeatletter
    \csname  @addtoreset\endcsname{equation}{section}
    \makeatother}
\eqnsection

\theoremstyle{plain}

\theoremstyle{definition}

\theoremstyle{remark}
\newtheorem{rem}{Remark}[section]

\newtheorem*{rem*}{Remark}

\newtheorem*{note*}{Note}
\newtheorem*{prop*}{Proposition}

\newtheorem{exam}{Example}

\newtheorem*{exam*}{Example}

\begin{document}
\title[Asymptotic self--similarity]
{Asymptotic self-similarity  and order-two ergodic theorems  for
renewal flows}

\date{March 19, 2013}

\author{Albert Meads Fisher} 

\address{Albert M. Fisher,
Dept Mat IME-USP,
Caixa Postal 66281,
CEP 05315-970
S\~ao Paulo, Brazil}
\urladdr{http://ime.usp.br/$\sim$afisher}
\email{afisher@ime.usp.br}

\author {Marina Talet}
\address{Marina Talet
C.M.I. Universit\'e d'Aix-Marseille
LATP, CNRS-UMR 6632, 
F-13453 Marseille Cedex 13, 
France}

\email{marina@cmi.univ-mrs.fr}

\thanks{
A. Fisher partially 
supported by CNPq, FAPERGS, FAPESP, COFECUB CAPES-CNRS 661/10, and the Brazil-France
agreement/CNPq-CNRS}

\thanks{
M. Talet partially 
supported by the Brazil-France agreement/CNPq-CNRS and FAPESP}

\define\bx{\qed}
\define\wt{\widetilde}
\define\wh{\widehat}

\define\z{\mathbb Z} 
\define\T{\mathbb T} 
\define\C{\mathbb C}
\define\Q{\mathbb Q}
\define\ep{\varepsilon}
\define\varep{\varepsilon}
\define\ho{H\"older\ }
\define\h{\mathbb H}
\define\n{\mathbb N}
\define\e{\mathbb E}
\define\goto{\rightarrow}
\define\E{\Cal  E}

\define\bbar{\overline}
\define\r{\mathbb R}
\define\s{\Cal S}

\keywords{
renewal
 process, Mittag--Leffler process, almost-sure invariance
principle in log density, scaling flow, Hausdorff measure, order--two ergodic theorem,
order--two density, cocycle, infinite measure ergodic theory}

\maketitle

\begin{abstract}We prove  a {\em log average almost-sure invariance
principle} ({\em log asip}) for renewal processes with positive
i.i.d.~gaps in the domain of attraction of an $\alpha$--stable law
with $0<\alpha<1$.
Dynamically, this means that renewal and Mittag-Leffler paths 
are forward asymptotic in the scaling flow, up to a time average.
This
strengthens the almost-sure invariance
principle in log density we proved in \cite{FisherTalet11b}.
The scaling flow is a Bernoulli flow on a probability space. 
We study a second  flow, the {\em increment
flow},  transverse to the scaling flow, which preserves an infinite
invariant measure constructed using singular cocycles.
A cocycle version of the Hopf Ratio Ergodic Theorem leads to 
  an order--two  ergodic theorem for the
  Mittag--Leffler  increment flow.
Via the  {\em log asip}, this  result then passes to a second
increment flow,
associated to the renewal process. As corollaries, we have
new proofs of
 theorems    of \cite{AaronsonDenkerFisher92} and of
 \cite{ChungErdos51},  motivated by fractal geometry.
\end{abstract}

\section{Introduction}

Let 
$(N_n)$ be a  renewal process 
with i.i.d.~positive integer gaps $(X_i)$ of distribution
function $F$ in the 
 domain of attraction of a completely asymmetric  $\alpha$--stable law
 with distribution function 
 $G_{\alpha}$, for $\alpha\in (0,1)$. Thus, there exists a 
 {\em normalizing sequence}   $(a_n)$ such that the following convergence in law holds:
\begin{equation}
\label{e:stableclt}
\frac{1}{a_n} \sum_{i=1}^n X_i\stackrel{\text{ law}} \longrightarrow G_{\alpha} \; \; 
\;\;\;\;\;\;\text{as $n \to \infty$.}
\end{equation}

For this range of $\alpha$, the  
expected gap  between events is infinite and 
the 
set $\Cal O$ of event occurrences has density zero.
Nevertheless, as we shall see,  $\Cal O$  has quite a bit of structure: it can be thought of as a
fractal integer set, for which one can calculate 
a ``dimension'' and a
``Hausdorff measure''. Moreover, the set  $\Cal O$ 
rescales to a dilation--invariant collection of fractal subsets of the
reals, 
which have  that same 
dimension and   average Hausdorff measure,
at both small and large scales.

\smallskip

 These observations  have interesting ergodic theoretic consequences.
The above renewal process  describes the times of return to a subset of finite
measure of a conservative ergodic infinite measure--preserving map,
the {\em renewal transformation} (which can be represented as a
countable state Markov chain with stationary infinite measure);  the transformation 
   renormalizes under scaling to an infinite measure preserving flow,
the 
{\em increment flow} of the Mittag--Leffler process. 
This flow 
 is transverse to the 
 dynamics of  renormalization, given by  the scaling flow of index
 $\alpha$, which due to the
 self--similarity of the Mittag--Leffler process  preserves a  probability measure. 
The renormalization is expressed in a commutation relation linking
the pair of flows, identical to that shared by 
the
geodesic and horocycle flows on Riemann surfaces of infinite area. As a
consequence of the  renormalization approximation, plus ergodicity of
both flows, we derive
an order--two ergodic theorem for the renewal flow, mirroring results
for the infinite measure horocycle flow. This type of result should be
considered an infinite--measure analogue of the Birkhoff ergodic
theorem statement that ``time average equals space average'', seen
through the lens of Hausdorff measure and fractal geometry \cite{Fisher92}.

\smallskip

But first
let us   be more precise  regarding the 
fractal--like structure of the set $\Cal O$. One can show that 
for a.e.~path $N_n$, the ratio $  (\log N_n)/ \log n$ converges to $\alpha$,  the 
{\em dimension} of the integer set $\Cal O$  as defined in \cite{BedfordFisher92}.

\smallskip

This result, noted by Chung and Erd\"os (in Theorem 7 of \cite{ChungErdos51}) gives
$n^\alpha$ as a first estimate for the growth of $N_n$.
A better approximation is given by $\wh a_n=
1/(1-F(n)),$  a regularly varying sequence of index $\alpha$, as 
 $N_n /\wh a_n$  converges in law to the {\em Mittag--Leffler}
 distribution of index $\alpha$, which has distribution function
${\cal M}_{\alpha}(x)= (1-G_{\alpha}(x^{-\frac{1}{\alpha}}))$; see
Theorem 7 of
\cite{Feller49}.

Writing $Y_i= 1$ when an event occurs and  $Y_i= 0$
otherwise, so $N_n= \sum_{i=1}^nY_i$, and defining the {\em return sequence} $\bbar a_n =\e(N_n)$,
 the expected number of events up to time $n$, one has the following
further result of Chung and Erd\"os (Theorem 6 of \cite{ChungErdos51}):
\begin{equation}\label{e:ChungErdos}
\lim_{k\to \infty} \frac{1}{\log \bbar a(k)} \sum_{n=1}^k \frac{Y_n
  }{\bbar a_n} =1, \text{ a.s.}
\end{equation}
As noted in Proposition 1 of \cite{AaronsonDenkerFisher92}, when $\bbar a_n$ is regularly
varying of index $\alpha\in (0,1)$ (which will be the case here)   then  \eqref{e:ChungErdos}
is equivalent to:
\begin{equation}\label{e:ADF}
\lim_{k\to\infty} \frac{1}{\log k} \sum_{n=1}^k\frac{N_n}{\bbar a_n}
\frac{1}{n}=1, \text{ a.s.}
\end{equation}

The different normalizations $a_n, \wh a_n, \bbar a_n$ can be
unified. First, as shown by L\'evy, a normalizing sequence $a_n$ for  
\eqref{e:stableclt}
is necessarily regularly varying of index  $1/\alpha$.  Now  
as in
\cite{FisherTalet11a}, one can find an especially nice normalizing
sequence: we construct from $F$ an increasing
$C^1$ function  $a(\cdot) $ with regularly varying derivative so  its
inverse $\wh a\equiv a^{-1}$  is, at integer values, asymptotically  equivalent to Feller's sequence $\wh a_n$,
written throughout $\wh a(n)\sim \wh a_n$ (i.e.~their ratio goes to 1).
There exists, furthermore, a constant ${\mathtt c}\in
(0,1)$ such that $\bbar a_n \sim {\mathtt c}\cdot\wh a(n)$; see
Corollary \ref{c:logaveragerentf}.

\smallskip

Replacing $\bbar a_n$ by $\wh a(n)$
in the above averages exchanges $1$ for  ${\mathtt c}$ as the limiting
value, and equation \eqref{e:ADF} then 
 admits a fractal geometric
 interpretation: the limit gives  the  
``Hausdorff measure''   of the integer set $\Cal O$ for the
  gauge function $\wh a(\cdot)$ (see Proposition \ref{prop1}) with the number $\mathtt c$
equalling
the order--two  density \cite{BedfordFisher92} of the limiting
fractal sets of reals.

To explain this more precisely, we move to the more general setting of biinfinite
renewal
processes with  real gaps.
For $(X_i)_{i\in\z}$  an i.i.d.~sequence with distribution supported on $(0,+\infty)$,
 defining $(S_n)_{n\in\z}$ by $S_0=0$, 
\[
S_n= 
\left\{ \begin{array}{ll} 
&\sum_{i=0}^{n-1}X_i \text{ for } n>0  \\
-&\sum_{i=n}^{-1}X_i \text{ for } n<0\\
\end{array} \right.
\]
and
$\bbar S(t)= S_{[t]}$, then for $\wh {\bbar S}  (t) = \inf\{s: \bbar S(s)>t\}$ the generalized inverse, 
we call $\bbar N= \wh{\bbar S}-1$  the {\em two--sided renewal
process} with gaps $(X_i)$.  Thus, $\bbar N(0)= 0$ and for
$t\geq 0$, 
$\bbar N(t)$ is the total number of events which occur up to and
including time
$t$, excluding the initial event at time  $0$. For the special case of integer gaps, 
then 
$\bbar N(0)=0$, and
\begin{equation}
  \label{eq:N_n}
   \bbar N(n)= 
\left\{ \begin{array}{ll} 
&\sum_{i=1}^{n}Y_i \;\; \;\;\;\;\text{ for } n>0  \\
-&\sum_{i=n+1}^{0}Y_i \;\;\; \text{ for } n<0.\\
\end{array} \right.
\end{equation}
This agrees with the usual definition of integer gap renewal process $N_n$ for
$n\geq 0$, so we use the above equation to extend to all $n\in\Z$,
setting $N_n\equiv \bbar N(n)$.
Both the one-- and two--sided
processes appear  in this paper.  

As in
\eqref{e:stableclt}, we assume that the distribution of $X_i$ is in the domain of
attraction of 
$G_{\alpha}$.

\smallskip

We recall the definition of the Mittag--Leffler process; see also \cite{FisherTalet11b}.
Let $D= D_{\r^+}$ denote {\em Skorokhod path space}: the collection of
all c\`adl\`ag (right--continuous with left limits) real--valued functions on
$\R^+$, equipped with
 the Skorokhod $J_1$--topology.
For $Z$  a completely asymmetric  stable process of index  $\alpha\in
(0,1)$, represented on 
$D$  with  Borel probability measure $\nu$, a.e.~$Z$ is increasing and 
the Mittag--Leffler paths  $\wh Z(t)$ of parameter $\alpha$ are the
generalized inverses 
of the stable paths:
\begin{equation}\label{e:geninv}
\wh Z(t)= \inf\{s:\, Z(s)>t\}.
\end{equation}

Denoting by  $\wh \nu$  the corresponding measure   for the
Mittag--Leffler process, 
the {\em scaling flow}  on $D$ of index $\alpha$
 defined by
 \begin{equation}
   \label{eq:scaling flow}
(\wh\tau_t \wh Z)(x)= \frac{\wh Z(e^t x)}{e^{\alpha t}}  
 \end{equation}
is ergodic, in fact  is a Bernoulli flow of infinite
entropy, see Proposition \ref{p:nextcommreln}. 

Our first result states:
\begin{theo}\label{t:main2} 
{\em (A log average   almost-sure invariance
principle for renewal processes, $\alpha\in(0,1)$)}
Let $(X_i)_{i\geq 0}$ be an i.i.d.~sequence of a.s.~positive random variables with common 
distribution function $F$ in the domain of attraction of an $\alpha$--stable
law. Then there exists a $C^1$, increasing, regularly
varying function $a(\cdot)$ of index  $1/\alpha$  with
regularly varying derivative,  such that $a(n)$ gives a
normalizing sequence of \eqref{e:stableclt}, and such that, 
for the regulary varying function of index one
$h(\cdot)\equiv a^{\alpha}(\cdot)$, 
then 
$\bbar N$ and  a Mittag--Leffler process $\wh Z$  of index $\alpha$
can be redefined to live on the same probability space so as to
satisfy:

\begin{equation}\label{main}
\lim_{T\to \infty} \frac{1}{\log T} \int_1^T \frac{|h({\bbar N}(t))-
  \wh Z (t) |}{t^{\alpha}}\; 
\frac{\dd t}{t} =\lim_{T\to \infty} \frac{1}{\log T} \int_1^T 
\frac{||h\circ{\bbar N}- \wh Z ||_{[0,t]}^{\infty}}{t^{\alpha}}\; \frac{\dd t}{t} =0\;\;\;\; a.s.
\end{equation}

\noindent
where $||f-g ||_{[0,t]}^{\infty}=\sup_{0\leq s\leq t}|f(s)-g(s)|$. 

\smallskip

Equivalently, in dynamical terms, 
for $\wh \tau_t$ the scaling flow of index  $\alpha$,
there exists a joining of  the laws of the Mittag--Leffler and renewal processes $\wh \nu$ and $\wh \mu $, such that 
for a.e.~pair $(\wh Z, \bbar N)$, 
\[
\lim_{T\to \infty}\frac{1}{T}\int_0^T d_1^u(\wh\tau_t(\wh Z),\wh\tau_t(h\circ\bbar N))\,\dd t= 0,\]
\noindent
with $d_1^u (f,g)=||f-g||_{[0,1]}^{\infty}$; we say that $h \circ \overline N$ is in the {\em Ces\'aro--average} 
$d_1^u$--stable manifold $W^s_{CES}(\wh Z)$.
\end{theo}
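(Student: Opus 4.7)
The plan is to bootstrap the almost-sure invariance principle in log density established in \cite{FisherTalet11b} to the log-average statement \eqref{main}. That earlier result provides, on a common probability space, a joining of $\bbar N$ and a Mittag--Leffler process $\wh Z$ of index $\alpha$ (obtained as the generalized inverse \eqref{e:geninv} of an $\alpha$-stable subordinator $Z$ Skorokhod-embedded against the partial sums $S_n$, using the smooth regularly varying norming $a(\cdot)$ of \cite{FisherTalet11a}) such that, setting $E(t)=||h\circ\bbar N-\wh Z||^\infty_{[0,t]}$ with $h=a^{\alpha}$ regularly varying of index one, for each $\varepsilon>0$
\[
\lim_{T\to\infty}\frac{1}{\log T}\int_1^T \mathbf{1}_{\{E(t)>\varepsilon\,t^{\alpha}\}}\,\frac{\dd t}{t}=0\quad\text{a.s.}
\]
The task is therefore to upgrade this log-density convergence to $L^1$ convergence against the log-average measure $\dd t/t$.

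The key reduction is standard: if $f_t\ge 0$ tends to zero in log density and $\limsup_T(\log T)^{-1}\int_1^T f_t^p\,\dd t/t<\infty$ a.s.\ for some $p>1$, then $f_t$ also tends to zero in log average. I would apply this to $f_t=E(t)/t^{\alpha}$. Since $\bbar N$ and $\wh Z$ are non-decreasing, the supremum defining $E(t)$ is comparable (in any $L^q$) to $\wh Z(t)+h(\bbar N(t))$, so it suffices to bound $(\wh Z(t)/t^{\alpha})^p$ and $(h(\bbar N(t))/t^{\alpha})^p$ in log average, path by path. For the Mittag--Leffler piece, $\alpha$-self-similarity gives $\wh Z(t)/t^{\alpha}\stackrel{\mathrm{law}}{=}\wh Z(1)$, whose law is Mittag--Leffler with all moments finite, and ergodicity of the scaling flow on $(D,\wh\nu)$ (Proposition \ref{p:nextcommreln}) yields the pathwise log-average limit $\mathbb E[\wh Z(1)^p]<\infty$ a.s. For the renewal piece, the analogous convergence $h(\bbar N(t))/t^{\alpha}\Rightarrow \wh Z(1)$ (Theorem 7 of \cite{Feller49}), combined with classical tail estimates for renewal processes in the stable domain of attraction and the regular variation of $h$, yields uniform-in-$t$ $L^p$ bounds, from which a pathwise log-average $L^p$ bound follows by a direct moment calculation.

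With log-average $L^p$ boundedness of $E(t)/t^{\alpha}$ in hand, the density-to-average reduction delivers the uniform (second) equality in \eqref{main}; the pointwise first equality is weaker and follows at once from $|h(\bbar N(t))-\wh Z(t)|\le E(t)$. The dynamical reformulation then reduces to the change of variable $u=e^t$ in the Cesàro integral, which gives
\[
\frac{1}{T}\int_0^T d_1^u(\wh\tau_t\wh Z,\wh\tau_t(h\circ\bbar N))\,\dd t=\frac{1}{T}\int_1^{e^T}\frac{E(u)}{u^{\alpha}}\,\frac{\dd u}{u},
\]
which is exactly the log-average on $[1,e^T]$ (since $\log(e^T)=T$) just shown to tend to zero, placing $h\circ\bbar N$ in the Cesàro $d_1^u$-stable manifold $W^s_{CES}(\wh Z)$. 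The hard part will be the uniform-in-$t$ moment control on the renewal side, $h(\bbar N(t))/t^{\alpha}$, together with its conversion into a pathwise log-average bound; this is where the monotonicity of $\bbar N$, the fine regular variation of $a(\cdot)$ built in \cite{FisherTalet11a}, and the ergodicity of the scaling action must all be combined carefully.
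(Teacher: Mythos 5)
Your overall reduction scheme is the same as the paper's: use the \emph{asip}(log) of \cite{FisherTalet11b} to get a set $\mathcal B$ of zero log density on whose complement the difference is $o(t^\alpha)$, then apply H\"older to control the integral over $\mathcal B$, reducing to a log-average $L^p$ bound on $|h(\bbar N(t))-\wh Z(t)|/t^\alpha$, which by convexity splits into the Mittag--Leffler and renewal pieces. Your treatment of the $\wh Z$ piece (Birkhoff for the ergodic scaling flow) is exactly what the paper does.

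The gap is in the renewal piece, which you dispatch with ``uniform-in-$t$ $L^p$ bounds, from which a pathwise log-average $L^p$ bound follows by a direct moment calculation.'' This step does not follow: uniform moment bounds $\sup_t\e[(h(\bbar N(t))/t^\alpha)^p]<\infty$ give only boundedness of the \emph{expectation} of the log average, not a.s.\ boundedness of the pathwise $\limsup$ --- Fatou controls a $\liminf$, not the $\limsup$ you need. You also cannot appeal to ergodicity of a scaling flow on the renewal side, since the renewal paths carry no scaling-flow-invariant probability measure (only Mittag--Leffler paths do, via $\wh\nu$). The paper's route to \eqref{djoudjou} is considerably heavier: first decompose on the random partition $([S_n,S_{n+1}))$, bound $N(T)\leq AT$ a.s.\ by Borel--Cantelli, and use regular variation of $h$ to reduce to the a.s.\ boundedness of $\frac{1}{\log n}\sum_{k\leq n}\frac{1}{k}(a(k)/S_k)^{\alpha p}$ (Lemma \ref{NtoS}); then prove that via the a.s.~CLT of \cite{BerkesDehling93}, an Ibragimov--Lifshits subsequence argument along $n_m=[e^{\gamma^m}]$, and a delicate variance computation (Lemma \ref{correlations}) exploiting the covariance structure of the indicators $\chi_{\{S_k\le ta(k)\}}$ across $k$. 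That variance/subsequence machinery is what converts distributional moment control into an a.s.\ statement, and it is precisely what your ``direct moment calculation'' elides. As you concede, this is ``the hard part''; but your sketch does not indicate a mechanism for producing the a.s.\ bound, so the proposal as written is incomplete at the decisive step.
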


We mention the importance of finding  a representative
 for $a(\cdot)$ which is $C^1$ increasing
 with regularly varying derivative:
  such a function preserves log averages, see
 Proposition 2.5 of \cite{FisherTalet11a}, and that plays a key role in the proofs.

An important element in the proof of Theorem \ref{t:main2} is
a weaker statement proved in Theorem 1.1 of \cite{FisherTalet11b},  an {\em almost--sure invariance principle in log density, {\it asip} (log)}: 
that 
for
  a.e.~pair $(\bbar N, \wh Z)$ we have:
\begin{equation}\label{hNbar}
||h \circ \overline N -\wh Z||^\infty_{[0,T]} =o(T^{\alpha})\, \; (\log),\end{equation}
\noindent 
where $(\log)$ means off a set ${B}\subseteq \r^+$ of logarithmic density ($\equiv \lim_{T\to\infty} \frac{1}{\log T} \int_1^T \chi_{{B}} (t) \d t/t$) equal to zero, for $\chi_{{B}}$  the indicator function of the set $B$.

\medskip

\smallskip
Now the Birkhoff Ergodic Theorem for the scaling flow $\wh\tau_t$ of the
Mittag--Leffler process, together with  a  logarithmic change of
variables,  tells us that $\wh \nu$--a.s.
\begin{equation}\label{e:logaverageML}
\lim_{T\to\infty} \frac{1}{\log T} \int_1^T \frac{\wh Z(t)}{t^\alpha}\frac{1}{t}\, \d t=
\lim_{T\to\infty} \frac{1}{T} \int_0^T \varphi(\wh \tau_t(\wh Z)) \, \d t=\e(\varphi)=\e({\wh Z}(1)),
\end{equation}
where $\varphi(f)= f(1)$ defines an $L^1$- observable
on  the probability space $(D, \wh\nu)$.

\smallskip

Combining this with the following further result from  \cite{FisherTalet11b}: 
 for the inverse
function $\wh a\equiv
a^{-1}$, 
\begin{equation}\label{hN}
\biggl|\biggl|\frac{\bbar N(e^t \cdot)}{\wh a(e^t)}- \wh \tau_t( \wh Z)\biggr|\biggr|^\infty_{[0,1]} =
o(1)\,  \;\;\;\;
\text{ a.s. } (\log),
\end{equation}
which was shown to follow from \eqref{hNbar},
we  then   prove:

\begin{cor}\label{c:ChungErd}
Under the assumptions and notation of Theorem \ref{t:main2}, $\wh \mu$--a.s.
\begin{equation}\label{flipu}
\lim_{T\to \infty} \frac{1}{\log T} \int_1^T \frac{\bbar N(t)}{\wh a(t)} \frac{\dd t}{t}=\lim_{T\to \infty} \frac{1}{\log T} \int_1^T \frac{h({\bbar N}(t))}{t^{\alpha}}\; \frac{\dd t}{t} =\e(\wh Z(1)).
\end{equation}
\end{cor}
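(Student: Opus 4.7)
The plan is to prove both equalities in \eqref{flipu} at once by showing that all three expressions converge a.s.\ to $\e(\wh Z(1))$.

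For the second equality, I would use the pointwise triangle bound
\[
\left|\frac{h(\bbar N(t))}{t^\alpha}-\frac{\wh Z(t)}{t^\alpha}\right|\leq \frac{|h(\bbar N(t))-\wh Z(t)|}{t^\alpha},
\]
integrate against $\dd t/(t\log T)$ on $[1,T]$, and apply \eqref{main} of Theorem \ref{t:main2}. This shows the log average of $h(\bbar N(t))/t^\alpha$ has the same a.s.\ limit as that of $\wh Z(t)/t^\alpha$, which equals $\e(\wh Z(1))$ by \eqref{e:logaverageML}, i.e.\ the Birkhoff ergodic theorem for the scaling flow applied to the $L^1$ observable $\varphi(f)=f(1)$.

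For the first equality, after changing variables $t=e^s$ we rewrite the LHS as $\frac{1}{\log T}\int_0^{\log T}\bbar N(e^s)/\wh a(e^s)\,\dd s$. By \eqref{hN} specialised to $x=1$, off a set $B\subseteq \R^+$ of logarithmic density zero one has $\bbar N(e^s)/\wh a(e^s)=\varphi(\wh\tau_s\wh Z)+o(1)$. The Birkhoff average of $\varphi$ along the ergodic scaling flow (Proposition \ref{p:nextcommreln}) converges a.s.\ to $\e(\wh Z(1))$, and the pointwise $o(1)$ kills the contribution of $[0,\log T]\setminus B$ to the log average. The first equality then reduces to showing the integral over $B$ is $o(\log T)$.

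The main obstacle is precisely this last point: although $B$ has log density zero, the integrand $\bbar N(e^s)/\wh a(e^s)$ is not \emph{a priori} bounded on $B$. The plan is a truncation / uniform integrability argument leveraging the already--proved second equality. Since $h=a^\alpha$ is regularly varying of index one with $h(\wh a(t))=t^\alpha$, the uniform convergence theorem for regularly varying functions yields $h(\bbar N(t))/t^\alpha\sim \bbar N(t)/\wh a(t)$ whenever the ratio $\bbar N(t)/\wh a(t)$ stays in a compact subset of $(0,\infty)$. Combined with the finite log--average limit $\e(\wh Z(1))$ for $h(\bbar N(t))/t^\alpha$ established above, and with the finiteness of all moments of the Mittag--Leffler distribution of $\wh Z(1)$, this supplies the tail control needed; truncating the integrand at level $M$, handling the bounded part via density zero of $B$, and letting $M\to\infty$ via dominated convergence then completes the argument.
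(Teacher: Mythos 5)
Your treatment of the second equality is correct and matches the paper: you dominate $|h(\bbar N(t))/t^\alpha-\wh Z(t)/t^\alpha|$ pointwise, integrate, and invoke \eqref{main} together with the Birkhoff theorem for the scaling flow \eqref{e:logaverageML}. The strategy for the first equality --- start from \eqref{hN}, split into the complement of $B$ (where the estimate is pointwise $o(1)$) and the exceptional set $B$ of log density zero --- is also the one the paper uses. You correctly pinpoint the real obstacle: controlling $\int_{B}\bbar N(e^s)/\wh a(e^s)\,\dd s$ when the integrand may be large on $B$.

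The gap is that the tools you cite do not actually close this step. The finiteness of the limit $\e(\wh Z(1))$ for the log average of $h(\bbar N(t))/t^\alpha$ is a first--order statement and gives no uniform integrability on its own; the finiteness of all moments of the Mittag--Leffler law controls $\wh Z$, not $\bbar N(t)/\wh a(t)$; and the uniform convergence theorem for regularly varying functions, which you invoke to transfer from $h(\bbar N(t))/t^\alpha$ to $\bbar N(t)/\wh a(t)$, is only valid when the ratio $\bbar N(t)/\wh a(t)$ is confined to a compact set of $(0,\infty)$, which is exactly what you cannot assume on $B$. The dominated convergence step at the end thus has no dominating function. What is actually needed is the $p$--th moment bound
\[
\limsup_{T\to\infty}\frac{1}{\log T}\int_1^T \left(\frac{\bbar N(t)}{\wh a(t)}\right)^p\frac{\dd t}{t}\leq c\quad\text{a.s., for some }p>1,
\]
after which H\"older's inequality (precisely as in \eqref{holder}) kills the contribution of $B$. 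The paper obtains this from the already--proved bound \eqref{djoudjou} on $(h(\bbar N(t))/t^\alpha)^p$ by applying Potter's Theorem \eqref{potter} to $\wh a$: Potter gives a two--sided bound $(\bbar N(t)/\wh a(t))^p\leq A\,(h(\bbar N(t))/t^\alpha)^{p_+}+A\,(h(\bbar N(t))/t^\alpha)^{p_-}$ with $p_\pm=p(1\pm\delta)$ that is valid for \emph{all} values of the ratio, not just on compacts, which is precisely what the uniform convergence theorem cannot do. Without the Potter step your truncation argument has no way to handle the large--ratio regime.
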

 To explain the connection of the theorem and corollary  with fractal geometry, we note that the paths  $\wh Z(t)$ are
nondecreasing and continuous, with a nowhere dense set $C_{\wh Z}$
of points of increase. By a result
of Hawkes, 
for the gauge function
\begin{equation}\label{e:Hawkesgauge}
\psi(t)= t^\alpha(\log\log \frac{1}{t})^{1-\alpha},
\text{ then for  }
c_\alpha= \frac{\alpha ^{1-\alpha}(1-\alpha) ^\alpha}{\Gamma(3-\alpha)},
\end{equation}
$\wh Z(t)/c_\alpha$ is the
distribution function of 
 the Hausdorff measure $H_\psi$ restricted to the set $C_{\wh Z}$; see \S \ref{s:range}.
 That is, 
a.e.~Mittag--Leffler path $\wh Z(t)$  is the 
$c_\alpha H_\psi$--Cantor function for the  $\alpha$--dimensional
fractal set $C_{\wh Z}$, and  the content of Theorem \ref{t:main2}
is a precise description of the convergence of the discrete set of events of
the renewal process to this fractal set of reals.

\smallskip

An application of the Ergodic Theorem then allows us  to relate the
limiting constant of 
\eqref{flipu} to small--scale fractal geometry:

\begin{prop}\label{prop1} For $\psi $ as in \eqref{e:Hawkesgauge}, 
the constant $\c\equiv \e(\wh Z(1))= \int f(1) \d\wh \nu(f)$ is 
equal to $c_\alpha $ times 
 the one--sided order--two density of 
the set of points of increase of the process $\wh Z$ with respect to $H_\psi$: for $\wh \nu$--a.e.~$\wh Z$, for $H_\psi$-a.e~point $x\in C_{\wh Z}$, 
\[
\c=  c_\alpha \cdot
\lim_{T\to\infty}\frac{1}{T}\int_0^T\frac{H_\psi(C_{\wh Z}\cap[x,
  e^{-s}])}{e^{-s\alpha}}\dd s = \frac{\sin\pi \alpha}{\pi \alpha}
.\] 
\end{prop}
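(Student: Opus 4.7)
The plan is to combine the Hawkes Hausdorff--measure identification of $H_\psi|_{C_{\wh Z}}$, the strong Markov property of the stable subordinator, and the Birkhoff ergodic theorem for the scaling flow run in reverse time. By Hawkes (as recalled in \S\ref{s:range}), for $x\in C_{\wh Z}$ and $r>0$,
\[
H_\psi\bigl(C_{\wh Z}\cap[x,x+r]\bigr) = c_\alpha^{-1}\bigl(\wh Z(x+r)-\wh Z(x)\bigr).
\]
Writing $\wh Z_x(r) \equiv \wh Z(x+r)-\wh Z(x)$ and observing that $\wh Z_x(e^{-s})/e^{-s\alpha} = (\wh \tau_{-s}\wh Z_x)(1)$, the one--sided order--two density at $x$ is therefore $c_\alpha^{-1}$ times the Ces\`aro average $\tfrac{1}{T}\int_0^T (\wh\tau_{-s}\wh Z_x)(1)\,\dd s$ along the backward scaling flow.

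The crucial step is then the Palm statement that, for $\wh\nu$--a.e.~$\wh Z$ and $H_\psi$--a.e.~$x\in C_{\wh Z}$, the shifted path $\wh Z_x$ is itself distributed as $\wh\nu$. The strong Markov property of the stable subordinator $Z$ at the hitting time $\sigma_x=\wh Z(x)$ shows that $Z(\sigma_x+\cdot)-x$ is an independent copy of $Z$ (for $H_\psi$--typical $x$, hence not a jump endpoint), so its generalised inverse $\wh Z_x$ is an independent copy of $\wh Z$; Hawkes's formula is exactly what makes integration of $x$ against $H_\psi|_{C_{\wh Z}}$ coincide with the canonical Palm/Campbell disintegration under which this independent copy has law $\wh\nu$. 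Granted this, Birkhoff's theorem applied to the ergodic scaling flow (reversed in time) with the $L^1(\wh\nu)$--observable $\varphi(f)=f(1)$ gives
\[
\lim_{T\to\infty}\frac{1}{T}\int_0^T(\wh\tau_{-s}\wh Z_x)(1)\,\dd s = \int f(1)\,\dd \wh\nu(f) = \c,
\]
which is the first equality $\c = c_\alpha\cdot(\text{one-sided order--two density})$.

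The final numerical identity $\c = \sin(\pi\alpha)/(\pi\alpha)$ is a direct Mittag--Leffler moment computation. Writing $\c = \e(\wh Z(1))=\int_0^\infty \P(Z(t)<1)\,\dd t$ and using self--similarity $Z(t)\stackrel{d}{=}t^{1/\alpha}Z(1)$, the integral reduces to the negative $\alpha$--moment $\e(Z(1)^{-\alpha})$ up to a factor of $\alpha$; this moment is evaluated via the Laplace transform of $Z(1)$ and a change of variables into a Gamma integral, after which the reflection formula $\Gamma(\alpha)\Gamma(1-\alpha)=\pi/\sin(\pi\alpha)$ delivers the stated value, consistently with the normalisation of $Z$ built into Hawkes's constant $c_\alpha$.

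The main obstacle is the Palm bookkeeping in the middle paragraph: one must verify that Hawkes's Hausdorff--measure identification is exactly the normalisation that turns ``$H_\psi$--a.e.~$x\in C_{\wh Z}$'' into ``$\wh Z_x\sim \wh\nu$''. In other words, one needs that the Hausdorff measure on the range of the stable subordinator is the natural occupation measure for which strong--Markov regeneration is stationary under the rescaling. Once this identification is pinned down, the ergodic theorem step and the moment computation are both routine.
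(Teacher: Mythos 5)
Your overall strategy is the right one, but the middle step you yourself flag as ``the main obstacle''---the Palm bookkeeping that converts ``for a fixed level $x$, the post-$\sigma_x$ process is a fresh copy of $Z$'' into ``for $H_\psi$-a.e.~$x\in C_{\wh Z}$, $\wh Z_x\sim\wh\nu$''---is exactly where your argument is incomplete, and it is the only genuinely nontrivial part of the proof. The strong Markov property gives a statement for each \emph{fixed, deterministic} level $x$; transferring it to a.e.~point of the \emph{random} set $C_{\wh Z}$ under the \emph{random} measure $H_\psi|_{C_{\wh Z}}$ is precisely what must be justified, and you leave it as an assertion.

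The paper fills this gap by a much more elementary Fubini argument that avoids Palm calculus altogether. It first establishes, via Birkhoff for the reversed scaling flow $\wh\tau_{-s}$ with observable $\varphi(\wh f)=\wh f(1)$ and Lemma~\ref{p:identifyingconstant}(ii), that the right order-two density at $x=0$ equals $c_\alpha^{-1}\c$ for $\wh\nu$-a.e.~$\wh Z$; let $D_1$ be the corresponding full-$\nu$-measure set of stable paths $Z$. It then passes to the product space $[0,T]\times D$ and considers $A_T=\{(t,Z):\eta_t Z\in D_1\}$. Because the stable process has stationary increments, $\eta_t$ preserves $\nu$, so every $t$-slice of $A_T$ has full $\nu$-measure; Fubini then gives that for $\nu$-a.e.~$Z$, Lebesgue-a.e.~$t\in[0,T]$ has $\eta_tZ\in D_1$. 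The final observation---which plays the role of your ``Palm/Campbell identification'' and is worth making explicit---is that for $\nu$-a.e.~$Z$, Lebesgue measure on the time axis pushes forward by $Z$ to $H_\psi|_{C_{\wh Z}\cap[0,T]}$ (this is Hawkes together with Lemma~\ref{p:identifyingconstant}(ii)). Thus ``Lebesgue-a.e.~$t$'' becomes ``$H_\psi$-a.e.~$x$,'' which is the statement you want. Note this uses only stationarity of increments (invariance of $\nu$ under $\eta_t$), not the full strong Markov property, and requires no regeneration argument. Your sketch, if completed, would need essentially the same Fubini step dressed in Palm-theoretic language; the paper's route is both shorter and requires less machinery.

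On the final numerical evaluation: the paper simply cites Bingham--Goldie--Teugels for $\e(\wh Z(1))=1/(\Gamma(1-\alpha)\Gamma(1+\alpha))$ and then applies the reflection formula. Your Laplace-transform computation of $\e(Z(1)^{-\alpha})$ is a legitimate alternative, but be careful with the normalization: with $\e(e^{-\lambda Z(1)})=e^{-c\lambda^\alpha}$ one gets $\e(\wh Z(1))=1/(c\,\Gamma(1+\alpha))$, so you must verify that the constant $c$ built into the paper's $G_{\alpha,1}$ (via $\check c_\alpha$ in \eqref{c}) indeed reduces to $\Gamma(1-\alpha)$ after the identities $\Gamma(3-\alpha)=(2-\alpha)(1-\alpha)\Gamma(1-\alpha)$; as you yourself note, the answer must be consistent with Hawkes's constant, and the bookkeeping is not immediate. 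The cited reference sidesteps this.
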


Now we turn to the analogy with the geodesic and
stable horocycle flows on the unit tangent bundle of an infinite area  Riemann surface
uniformized by a finitely generated Fuchsian group. These flows are ergodic, and preserve, respectively, a 
probability measure, the Patterson-Sullivan measure
\cite{Patterson76} \cite{Sullivan79}, and a  related
infinite measure, first studied by Kenny \cite{Kenny83}; 
see  
\cite{Fisher04-2}. Moreover, they satisfy the commutation relation
 \begin{equation}
  \label{eq:comm reln}
g_t\circ  h_s= h_{e^{-t}s}\circ g_t.
\end{equation}
Here,
the scaling flow $\wh\tau_t$ on Mittag--Leffler paths will play
the role of the geodesic flow, 
with the part of the 
horocycle flow  taken by the {\em increment flow}  $\bbar\eta_t: f(x)\mapsto f(x+t)- f(t)$
 on the two--sided Skorokhod path space $D= D_\r$; the scaling flow
 acts on the two--sided paths again by equation \eqref{eq:scaling
   flow}, with the pair   $\wh\tau_t, $
 $\bbar\eta_s$ 
obeying the same relation as for $g_t$, $h_s$.

From the renewal process 
we  define the {\em renewal flow}, also  
realized as
an increment flow on $D$ but with
 a different infinite invariant measure. 
The  commutation relation 
 can be thought of as stating that  the geodesic flow 
 renormalizes the horocycle flow to itself, and that $\wh\tau_t $
 renormalizes the Mittag--Leffler increment flow to itself.
Then Theorem  \ref{t:main2}  says, essentially, that
 the renewal increment flow renormalizes to the Mittag--Leffler 
increment flow in the limit, by applying the scaling flow to the
joined pair 
of paths.

To state  our next results, which describe the ergodic theoretic consequences,
we  recall these definitions;  see
\cite{Aaronson97}. 
A measure preserving flow $\tau_t$ of a possibly infinite measure space 
$(X, \mu)$ is {\em ergodic} iff any {\em invariant}  set $A$ has 
$\mu A= 0 $ or $\mu A^c= 0$; the  flow is {\em recurrent}
iff almost every  point returns to a subset of  positive measure $A$
for arbitrarily large times, an equivalent notion being  that the flow is  {\em conservative}.

The invariant measures  for
both the Mittag--Leffler and renewal increment flows are
constructed via {\em cocycles} over the flows, see \S\S \ref{ss:cocycles}
and \ref{ss:incrementML}. These  are both conservative ergodic infinite measure flows
(Propositions \ref{p:nextcommreln} and
\ref{p:renewal flow}).
 Our conclusions 
are also most naturally  stated and proved using cocycles.
In the statement,
the {\em integral} of a  cocycle $\Phi$ over the flow $(D_\R, \bbar \nu, \bbar \eta_t)$ is 
$\I(\Phi)\equiv \frac{1}{t}\int_D \Phi(x,t)\d \bbar \nu(x)$; this does not depend on $t$,
 and agrees with the usual 
notion of  integral of a function in the special case that
the cocycle is generated by a  function (Proposition \ref{p:expectedvalue}).

We prove:
\begin{theo}(Order--two ergodic theorems for Mittag--Leffler and renewal
increment flows)\label{t:logaveragetheorem}

\smallskip
\item{(i)}
Let $\Phi(x,t)$ be a cocycle over the increment flow $(D_\R, \bbar \nu, \bbar \eta_t)$ on Mittag--Leffler
 paths 
which is measurable,  of
local bounded variation in $t$, and with $\I(\Phi)$
finite.
Then for  $\bbar \nu$--a.e.~ Mittag-Leffler path $\wh Z$, and for  $\c$ as above, 
\begin{equation}\label{e:MLlogavgthm}
\lim_{T\to \infty} \frac{1}{\log T} \int_1^T \frac{\Phi(\wh Z,t)}{t^\alpha}\; \frac{\dd t}{t}=\c \; \I(\Phi) .
\end{equation}

\smallskip

\item{(ii)}  Assuming the gap distributions of Theorem \ref{t:main2},
let $\Phi(x,t)$ be a cocycle over the increment flow on renewal paths
$(D_\R, \bbar \mu, \bbar\eta_t)$ which is measurable, of local bounded
variation in $t$, and with
$\I(\Phi)$
finite.
Then 
  for $\bbar \mu$--a.e.~renewal path $\bbar N$, and for $\wh a(\cdot)$ and $\c$ as above, we have:
\begin{equation}\label{e:logAv}
\lim_{T\to \infty} \frac{1}{\log T} \int_1^T \frac{\Phi(\bbar N,t)}{\wh a(t)}\; \frac{\dd t}{t}=\c \; \I(\Phi).
\end{equation}
\end{theo}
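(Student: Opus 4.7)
The plan is to establish both parts by decomposing the integrand as a product of a \emph{Hopf-type ratio} (which converges along orbits of the infinite-measure increment flow) and a \emph{reference log-Birkhoff factor} (whose log-average evaluates to $\c$). In both parts I take as reference the path cocycle $\Psi(x,t)\equiv x(t)$, which does satisfy the cocycle identity $\Psi(x,t+s)=\Psi(x,s)+\Psi(\bbar\eta_s x,t)$; and I assume, consistent with the construction of $\bbar\nu$ and $\bbar\mu$ from singular cocycles (sketched in the introduction), that the infinite invariant measures are normalized so that $\I(\Psi)=1$.

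\smallskip

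For part (i), first apply the cocycle version of the Hopf Ratio Ergodic Theorem (announced in the abstract) to $\Phi$ and $\Psi$ over the conservative ergodic infinite-measure flow $(D_\R,\bbar\nu,\bbar\eta_t)$: for $\bbar\nu$-a.e.~$\wh Z$,
\[
\frac{\Phi(\wh Z,t)}{\wh Z(t)}\;\longrightarrow\;\frac{\I(\Phi)}{\I(\Psi)}=\I(\Phi)\quad\text{as }t\to\infty.
\]
Then factor the integrand as
\[
\frac{\Phi(\wh Z,t)}{t^\alpha}\;=\;\frac{\Phi(\wh Z,t)}{\wh Z(t)}\cdot\frac{\wh Z(t)}{t^\alpha}.
\]
The log-average of the second factor equals $\e(\wh Z(1))=\c$ by \eqref{e:logaverageML}, which is Birkhoff applied to $\varphi(f)=f(1)$ on the Bernoulli probability-preserving scaling flow $(D,\wh\nu,\wh\tau_s)$. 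Combined with the Hopf-ratio convergence, the log-average of the product converges to $\I(\Phi)\cdot\c$; the remaining work is to verify that the error $o(1)\cdot\wh Z(t)/t^\alpha$ coming from the Hopf ratio contributes nothing, which follows by splitting the integration range at $t$ large enough that the ratio is within $\varepsilon$ and using that the log-average of $\wh Z(t)/t^\alpha$ is $\bbar\nu$-a.s.\ finite.

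\smallskip

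Part (ii) proceeds in parallel on the renewal side with the same reference cocycle $\Psi(\bbar N,t)=\bbar N(t)$: the cocycle Hopf ratio on $(D_\R,\bbar\mu,\bbar\eta_t)$ yields $\Phi(\bbar N,t)/\bbar N(t)\to\I(\Phi)$, and the reference log-Birkhoff factor is evaluated by Corollary \ref{c:ChungErd}, which supplies
\[
\frac{1}{\log T}\int_1^T\frac{\bbar N(t)}{\wh a(t)}\frac{\dd t}{t}\;\longrightarrow\;\c.
\]
Corollary \ref{c:ChungErd} is precisely where the renormalization of renewal paths to Mittag--Leffler paths, i.e.\ the log-asip of Theorem \ref{t:main2}, enters the argument --- it replaces the scaling-flow Birkhoff step used in part (i).

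\smallskip

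\textbf{Main obstacle.} The delicate input is the cocycle version of Hopf's Ratio Theorem for cocycles of \emph{local bounded variation}: classical Hopf handles Birkhoff sums of integrable functions (so cocycles absolutely continuous in $t$), whereas the reference cocycle $\Psi(x,t)=x(t)$ is genuinely singular --- its derivative is (up to the constant $c_\alpha$) the Hawkes Hausdorff measure on the range of $\wh Z$. Establishing the ratio theorem for signed-measure-valued cocycles, and checking that $\I(\Phi)$ is indeed independent of $t$ and compatible with the function-generated case, is the principal hurdle. A secondary technical point is the Ces\`aro-multiplication error step, and the verification that the normalization $\I(\Psi)=1$ arising from the construction of $\bbar\nu$ transfers cleanly to $\bbar\mu$ via the log-asip so that the same constant $\c$ appears on the right-hand side of both \eqref{e:MLlogavgthm} and \eqref{e:logAv}.
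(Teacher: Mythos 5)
Your approach is essentially the same as the paper's. The paper also proves the theorem by reducing, via the cocycle Hopf Ratio Theorem (Theorem~\ref{t:CocycleBirkHopf}), to the single reference cocycle $\Psi(x,t)=x(t)$, for which the log--average is evaluated by applying Birkhoff's theorem to the scaling flow (respectively, by Corollary~\ref{c:ChungErd} on the renewal side). Phrasing it as ``enough to prove for one cocycle'' versus ``factor the integrand as $\frac{\Phi}{\Psi}\cdot\frac{\Psi}{t^\alpha}$'' is exactly the same argument. Your identification of the cocycle Hopf theorem as the delicate input, and of the normalizations $\I(\Psi)=1$ with respect to $\bbar\nu$ and $\bbar\mu$, is correct (these are Propositions~\ref{p:expectedvalofcocycles} and~\ref{c:returntimes}).

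There is, however, one genuine gap you have not closed: the transfer of the reference log--average statement from the \emph{finite} measure to the \emph{infinite} measure. The Hopf ratio convergence $\Phi/\Psi\to\I(\Phi)$ legitimately holds $\bbar\nu$-a.e. (that is what the cocycle Hopf theorem gives). But the Birkhoff identity
\[
\frac{1}{\log T}\int_1^T\frac{\wh Z(t)}{t^\alpha}\,\frac{\dd t}{t}\longrightarrow \c
\]
is only a $\wh\nu$-a.e.~statement (it is Birkhoff for the probability-preserving scaling flow $(\wh D_{0>},\wh\nu,\wh\tau_t)$). The measures $\wh\nu$ and $\bbar\nu$ live on the same path space but are not equivalent --- one is a probability, the other is $\sigma$-finite infinite --- so ``$\wh\nu$-a.e.'' does not automatically upgrade to ``$\bbar\nu$-a.e.'' In your Ces\`aro error step you invoke that the log--average of $\wh Z(t)/t^\alpha$ is ``$\bbar\nu$-a.s.~finite,'' but what you actually need is that it \emph{converges to} $\c$ for $\bbar\nu$-a.e.~$\wh Z$, and this requires an argument. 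The paper supplies it: the log--average statement is $\bbar\eta_t$-orbit invariant (a short regular-variation computation shows $\wh f(t)/t^\alpha$ and $(\bbar\eta_s\wh f)(t)/t^\alpha$ share the same log--average), and the flows $\wh\eta_t$ and $\bbar\eta_t$ share a common cross--section with identical cross--section measures; these two facts together let a $\wh\nu$-a.e.~orbit-invariant statement pass to a $\bbar\nu$-a.e.~one. The same gap occurs in your part~(ii): Corollary~\ref{c:ChungErd} is a $\wh\mu$-a.e.~statement, and you need it $\bbar\mu$-a.e.; the paper closes this via Proposition~\ref{p:stable manifolds}(iii) plus the shared cross--section. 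Supplying this orbit-invariance/cross-section step would complete your proof.
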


One of the steps in the proof is to show, in Proposition \ref{p:stable
  manifolds}, that the increment flow
satisfies, for both the 
Mittag--Leffler and renewal measures, a property akin to the defining
property of the classical
horocycle flow: that  the stable manifolds of the geodesic
flow are preserved. In our setting the stable manifolds are weakened
to 
 the  Ces\`aro--average stable manifolds, leading to this  precise statement:
for $\wh \nu$--a.e.~$\wh Z$, for all $t\in \R$, $\eta_t \wh Z$ is  in
$W^s_{CES}(\wh Z)$, while
 with respect to 
the joining of   Theorem \ref{t:main2}, for  a.e.~pair 
$(\wh Z, \bbar N)$,  for all $t\in \R$, $\eta_t (h\circ \bbar N)$ is  in $W^s_{CES}(\wh Z)$.

\medskip

Specializing next to the case of integer gaps with which we began the
paper, we consider a countable-state Markov chain. The ergodic theory
model is the following:  we give
the {\em alphabet}
$\A$ (this is the collection of states)
  the discrete topology
and the biinfinite path space
$\Pi= \Pi_{-\infty}^{+\infty} \A$ the product topology, with $\B$  the
Borel $\sigma$--algebra of $\Pi$. 
The left shift map 
$\sigma$ acts on   $\Pi$ by
$(\sigma(\xx ))_i= x_{i+1}$, thus
 sending  $\xx= (\dots x_{-1} .x_0x_1\dots)$ to $ (\dots
x_{-1} x_0.x_1\dots)$. 
We  are given a row--stochastic matrix $P= (P_{ab})$, i.e.~such that
$\sum_{b\in\A}
P_{ab}=1$ for each $a\in\A$, 
and an invariant row vector 
$\wh\pi= (\wh\pi_a)_{a\in\A}$, so  $\wh\pi=\wh\pi P$, with $\rho$  the
shift--invariant 
Markov measure determined by $P$
and $\wh\pi$. Then 
$(\Pi, \B, \rho,\sigma)$ is a measure--preserving
transformation (of infinite measure iff $\wh\pi$ has
infinite mass), with  $x=(x_i)_{i\in\Z}$ 
  a path of the time biinfinite
stationary Markov chain. 

Choosing a state ${\mathtt a}\in\A$, let us assume that
$\rho$ is normalized so that for
$A= \{x:\, x_0= {\mathtt a}\}$, 
$\rho(A)=1$. For $i\in \Z$ define $Y_i=
\chi_A(\sigma^i(x))$, so $Y_i=1$  when the event ${\mathtt a}$
occurs.  Determining $N_n$ by equation \eqref{eq:N_n}, with $N_n$
replacing $\bbar N(n)$,
this defines  the {\em occupation
time process} of the state ${\mathtt a}$.

We show:
\begin{cor}\label{c:logaveragerentf} 
Given a stationary  infinite measure conservative ergodic  Markov chain 
$(x_i)_{i\in \Z}$ taking values in a countable alphabet $\A$, so
$x= (x_i)_{i\in \Z}\in \Pi= \Pi_{-\infty}^\infty\A$,  assume that
for some state ${\mathtt a}\in\A$, the shift--invariant
measure $\rho$ on $\Pi$ is normalized so that $\rho[x_0={\mathtt a}]=1$.
Then  for $ N_n$ the occupation time process of ${\mathtt a}$, 
 these conditions are equivalent:

\smallskip

\item{$(a)$} the  return sequence $ {\bbar a_n}\equiv  \e(N_n)$ is regularly varying
  of index  $\alpha\in (0,1)$;

\item{$(b)$} the distribution function $F$ of return times  to  state ${\mathtt a}$ is in
    the domain of attraction of $G_{\alpha}$.

\smallskip

\noindent
Suppose that $(a)$ (or $(b)$) holds. Then letting  $\sigma$  denote the left shift map on $(\Pi,\rho)$, we have that 
for any $\varphi\in L^1(\Pi, \rho)$, 
writing $S_n\varphi= \sum_{i=0}^{n-1}\varphi(\sigma^ix) \text{ for } n>0 ,$
for $\rho$--a.e.~$x$, 
\begin{eqnarray}\label{e:secondrenewallogavg}
(i)\; \; &\lim_{k\to \infty}& \frac{1}{\log k} \sum_1^k \frac{S_n\varphi (x)}{\wh
  a(n)} \frac{1}{n}=\c \; \int_\Pi\varphi\dd\rho,\\
\label{e:ChungErdosb}
(ii)\;\; &\lim_{k\to \infty}& \frac{1}{\log \wh a(k)} \sum_{n=1}^k \frac{\varphi
  \circ \sigma^n(x)}{\wh a(n)} =\c \; \int_\Pi\varphi\dd\rho,
\end{eqnarray}
for $\wh a(\cdot)$ and  $\c$  as above.

\noindent
$(iii)$ Moreover, $\bbar a_n\sim {\mathtt c}\cdot\wh a(n)$  hence 
\eqref{e:secondrenewallogavg},
\eqref{e:ChungErdosb} are equivalent to results of
\cite{AaronsonDenkerFisher92} 
for this case, with \eqref{e:ChungErdosb} implying a theorem of \cite{ChungErdos51}.
\end{cor}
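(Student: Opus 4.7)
The plan is to deduce everything from the continuous-time Corollary \ref{c:ChungErd} for the renewal flow, together with Hopf's ratio ergodic theorem for the Markov shift, after handling the Tauberian and arithmetic equivalences separately. For the equivalence $(a) \Leftrightarrow (b)$ I would invoke the classical Tauberian theorem (Garsia--Lamperti, Feller Vol.~II): with $\bbar a_n = \sum_{k=1}^n u_k$ the renewal function and $F$ the i.i.d.~gap distribution, $\bbar a_n$ is regularly varying of index $\alpha \in (0,1)$ iff $1 - F$ is regularly varying of index $-\alpha$, the latter being the domain-of-attraction condition for $G_\alpha$. The asymptotic $\bbar a_n \sim \c \cdot \wh a(n)$ then follows from the Mittag--Leffler weak limit $N_n/\wh a(n)\Rightarrow {\cal M}_\alpha$ together with uniform integrability in $n$ of $N_n/\wh a(n)$, available from standard renewal moment estimates; since $\e({\cal M}_\alpha)=\e(\wh Z(1))=\c$ by Proposition \ref{prop1}, this gives $(iii)$.

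For $(i)$, I reduce to the indicator case $\varphi = \chi_A$ with $A = \{x_0 = \mathtt a\}$. Applied to $(\Pi, \rho, \sigma)$ with reference set $A$, the Hopf ratio ergodic theorem gives, $\rho$-a.e.,
\[
\frac{S_n \varphi}{N_n} \;=\; \frac{S_n \varphi}{S_n \chi_A} \;\longrightarrow\; \int_\Pi \varphi \, \dd\rho,
\]
since $\rho(A)=1$ (treat $\varphi=\varphi_+-\varphi_-$ to pass from positive to general $L^1$). Writing the partial log average as
\[
\frac{1}{\log k} \sum_{n=1}^{k} \frac{S_n \varphi}{\wh a(n)}\frac{1}{n} \;=\; \frac{1}{\log k}\sum_{n=1}^{k} \frac{N_n}{\wh a(n)}\left(\frac{S_n \varphi}{N_n}\right)\frac{1}{n},
\]
a Toeplitz-style splitting reduces $(i)$ to the case $\varphi = \chi_A$: given $\epsilon > 0$, pick $M$ so $|S_n\varphi/N_n - \int\varphi\,\dd\rho| < \epsilon$ for $n \geq M$; the initial $M$ terms contribute $O(1/\log k)$, while the tail is sandwiched between $(\int\varphi\,\dd\rho \pm \epsilon)$ times the indicator log average, which converges to $\c$ by the next step.

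The indicator case asserts $\frac{1}{\log k} \sum_{n=1}^{k} \frac{N_n}{\wh a(n)}\frac{1}{n} \to \c$, which is the discrete-time version of Corollary \ref{c:ChungErd}. The passage from the continuous integral $\int_1^T \frac{\bbar N(t)}{\wh a(t)}\frac{\dd t}{t}$ to the Riemann sum is routine: $\bbar N$ is monotone c\`adl\`ag, $\wh a$ is $C^1$ and regularly varying, and $1/t$ is smooth, so on each $[n,n+1]$ the integrand differs from its sampled value by a quantity summable to $o(\log k)$. Statement $(ii)$ then follows from $(i)$ by the summation-by-parts equivalence of Proposition~1 of \cite{AaronsonDenkerFisher92}, using $\log \wh a(k) \sim \alpha \log k$ to convert a Ces\`aro-log sum of $S_n\varphi$ into one of $\varphi \circ \sigma^n$. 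Finally, $(iii)$ identifies these limits with those of \cite{AaronsonDenkerFisher92} (on replacing $\wh a(n)$ by $\bbar a_n \sim \c\,\wh a(n)$ the factor $\c$ is absorbed), and the case $\varphi = \chi_A$ in $(ii)$ specializes to Theorem~6 of \cite{ChungErdos51}. The main obstacle is controlling the unbounded factor $S_n\varphi/N_n$ before Hopf convergence kicks in: this is exactly why the Toeplitz splitting relies on the a~priori boundedness in $k$ of the indicator log average, which is itself the content of Corollary \ref{c:ChungErd}.
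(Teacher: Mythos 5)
Your argument is essentially correct and follows the same overall route as the paper: reduce to Corollary~\ref{c:ChungErd} for the indicator cocycle, propagate to general $\varphi$ via the Hopf ratio theorem, convert between $(i)$ and $(ii)$ with Proposition~1 of \cite{AaronsonDenkerFisher92}, and handle $(a)\Leftrightarrow(b)$ and $(iii)$ with Karamata--Tauberian and Mittag--Leffler facts. The one presentational difference is that you bypass the cocycle machinery: you apply the classical Hopf theorem to $S_n\varphi/S_n\chi_A$ (note $S_n\chi_A$ and $N_n$ differ by $Y_0-Y_n$, which is harmless) and then a Toeplitz sandwich to splice the bounded ratio into the indicator's log average, whereas the paper runs the same idea through Theorem~\ref{t:logaveragetheorem}$(ii)$ and its cocycle Hopf theorem (Theorem~\ref{t:CocycleBirkHopf}). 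Both buy the same thing; yours is marginally more self-contained at this step, the paper's is the natural corollary of its abstract framework and also applies to cocycles not generated by functions.

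Where you are sketchier than the paper is in $(iii)$. You state the weak limit as $N_n/\wh a(n)\Rightarrow{\cal M}_\alpha$, but Feller's theorem gives this with the sequence $\wh a_n=1/(1-F(n))$, not with the smooth inverse $\wh a(\cdot)$ constructed from $a(\cdot)$; identifying $\wh a(n)\sim\wh a_n$ is a genuine, if small, step. The paper proves it by passing through the stable limit $\P(S(n)\leq xa(n))=\P(N(xa(n))\geq n)$ and using uniform convergence of $\wh a(y/x)/\wh a(y)\to x^{-\alpha}$. Also, the paper does not need your uniform-integrability claim at all: it reads $\bbar a_n\sim\c\,\wh a_n$ directly off p.~361 of \cite{BinghamGoldieTeugels87} and then reduces to the asymptotic $\wh a(n)\sim\wh a_n$. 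Your UI route would work (moments of $N_n/\wh a_n$ do converge), but as written it leaves the $\wh a(n)$ vs.~$\wh a_n$ identification unaddressed, which is exactly the piece that requires the regular-variation argument.
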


In particular, the corollary applies to the {\em renewal
  transformation}, the infinite measure transformation built from a 
renewal process with integer gaps; one of the models for this map is a
countable state Markov chain.
See  \S \ref{ss:integergaps}.

\medskip

The outline of the paper is as follows. In \S
\ref{1} we prove Theorem \ref{t:main2}. This  follows from a key
result,  Lemma \ref{NtoS}, proved in \S \ref{ss:proof of main}.
Its proof hinges upon two preparatory lemmas regarding normalized
partial sums of i.i.d.~variables in the domain of attraction of the
stable law $G_\alpha$, stated and proved in \S
\ref{l:tolemmas},
as well as an idea of Ibragimov and Lifshits. In \S \ref{ss:follows
  from lema} we show that  Lemma \ref{NtoS} yields
Theorem \ref{t:main2}. 
 Lastly, the proof of Corollary  \ref{c:ChungErd} is presented
at the end of \S \ref{ss:follows
  from lema}.

Then we turn to the ergodic theory:  the construction
of the flows and measures,  
and the proof of Theorem \ref{t:logaveragetheorem}. To carry this out,  in \S
\ref{s:cocycles} we  develop some suitable machinery in 
abstract ergodic theory regarding cocycles and dual flows,, and prove a cocycle version of the Hopf ratio
ergodic theorem. In \S \ref{s:dual flows} we study the scaling and
increment flows and their duals. In \S \ref{s:measuredualincrement} we
use cocycles to construct our invariant measures. In \S \ref{ss:integergaps} we
describe several models for the renewal transformation and  explain
the connection with the renewal flow: it can be seen as 
the suspension flow over the transformation. In \S \ref{s:log average ergodic}
we first prove a general order--two ergodic theorem, valid for self--similar processes which are dual to 
processes with stationary increments. We then show in \S \ref{ss:stable manifolds} that the increment
flow behaves like a horocycle flow, and in \S \ref{ss:proofofthm} we
finish the proofs of 
Theorem \ref{t:logaveragetheorem} and  Corollary \ref{c:logaveragerentf}.
The identification of the constant $\c$ in terms of fractal geometry is given in Lemma 
\ref{p:identifyingconstant}, with  Proposition \ref{prop1}  proved in \S \ref{s:range}.

\smallskip

\section{Proof of  the log average {\em asip}, Theorem \ref{t:main2}} 
\label{1}

We recall from \S 2 of \cite{FisherTalet11a} some background and
notation which will be used throughout the 
paper; see also 
\cite{Feller71} pp 312-315 and 570. For  $\alpha\in (0,1)\cup (1,2]$
and $\xi\in [-1,1]$, 
   a random variable $X$ has {\em stable law}
$G_{\alpha, \xi, \kappa,\theta} $ if  its characteristic function
(i.e.~Fourier transform) is
\begin{equation}
  \label{eq:charfunctionstable}
   \e(e^{iwX})=  \exp\left(i\theta w+\kappa\cdot \, \frac{\Gamma(3-\alpha)}{\alpha(\alpha-1)}\, 
|w|^\alpha \bigl( \cos\frac{\pi \alpha}{2} - \frac{w}{|w|}i \xi
 \sin 
\frac{\pi \alpha}{2}\bigr) \right).
\end{equation}

There is a unique 
process $Z$ with stationary independent increments and with 
$G_{\alpha,\xi}\equiv G_{\alpha, \xi, 1,0}$ the law for $Z(1)$; $Z$ is self--similar of index
$1/\alpha,$ thus the law of $Z(t)$
is $G_{\alpha,\xi, t,0}.$ Now $G_{\alpha,\xi}$ has support on
$(0,+\infty)$ if and only if $\alpha\in (0,1)$ and $\xi=+1$, in which case it
is a (positive)
{\em completely asymmetric} law. The corresponding 
 process  $Z$ 
 is known as the {\em stable subordinator} of index $\alpha$.

\smallskip

 A distribution function $F$ supported on $(0,\infty)$  belongs to the domain of 
attraction (see \eqref{e:stableclt}) of $G_\alpha\equiv G_{\alpha,1}$
with $\alpha\in (0,1)$ if and only if $1-F$ is regularly varying of
index $-\alpha$, that is $1-F(x)=x^{-\alpha}\, l (x)$
with $l(\cdot)$ some slowly varying function  (i.e.~for all $x>0$, $
l(tx)/l(t)\to 1$ as $t\to\infty$). In this paper regular variation
always means at $+\infty$ unless indicated otherwise.

\smallskip

 The proof of  Thm.~\ref{t:main2} is carried out in several steps. We begin with  some preparatory material.

\subsection{Two lemmas}\label{l:tolemmas}
\begin{lem}\label{convmoments} Let $(X_i)_{i\geq 0}$ be an
  i.i.d.~sequence of a.s.~positive variables with common distribution function $F$ in
  the domain of attraction of $G_{\alpha}\equiv G_{\alpha,1}$ for
  $\alpha\in (0,1)$. So by \eqref{e:stableclt} there exists a regularly
  varying sequence  $a_k$ such that $S_k/a_k$ converges in law to $Z(1)$, with $\alpha-$stable distribution.  

\smallskip

\item{$(i)$} (de Acosta and Gin\'e, see \cite{deAcostaGine79}, page 225) For any $0\leq \beta<\alpha$, we have
\[
\lim_{k\to\infty} \e\left(\left(\frac{S_k}{a_k}\right)^{\beta}\right) =\e(Z^{\beta} (1)) <\infty.
\]

\smallskip

\item{$(ii)$}  For any $p>0$, we have
\begin{equation}\label{lulu}
\lim_{k\to\infty} \e\left(\left(\frac{a_k}{S_k}\right)^p \right) =\e\left(\frac{1}{Z^p (1)}\right)=\e(\wh Z^{p/\alpha} (1))<\infty.
\end{equation}
Equivalently, 
\begin{equation}\label{l:key}
\lim_{k\to\infty}\int_0^1\P(S_k \leq ta_k) \,\frac{\dd t}{t^{1+p}} =
\int_0^{1}\P(Z(1) \leq t) \, \frac{\dd t}{t^{1+p}}<\infty.
\end{equation}
\end{lem}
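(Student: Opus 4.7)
The plan is to reduce the moment convergence in \eqref{lulu} to a convergence of Laplace transforms. For any a.s.~positive random variable $W$ and $p>0$, Fubini applied to $W^{-p}=\Gamma(p)^{-1}\int_0^\infty\lambda^{p-1}e^{-\lambda W}\dd\lambda$ yields
\[
\e(W^{-p})=\frac{1}{\Gamma(p)}\int_0^\infty \lambda^{p-1}\e(e^{-\lambda W})\dd\lambda.
\]
Taking $W=S_k/a_k$ and $W=Z(1)$ and setting $\phi(s)=\e(e^{-sX_1})$, the first equality in \eqref{lulu} becomes
\[
\int_0^\infty\lambda^{p-1}\phi(\lambda/a_k)^k\dd\lambda\;\longrightarrow\;\int_0^\infty\lambda^{p-1}e^{-c\lambda^\alpha}\dd\lambda,
\]
where $c>0$ comes from the Tauberian equivalence $1-\phi(s)\sim cs^\alpha\ell(1/s)$ together with the normalisation $k\ell(a_k)/a_k^\alpha\to 1$ defining $a_k$.

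The pointwise convergence $\phi(\lambda/a_k)^k\to e^{-c\lambda^\alpha}$ is standard domain-of-attraction theory, so the technical heart of the argument is producing a dominating function; this is where I expect the main difficulty to lie. The plan is to fix $c'\in(0,c)$ and to show that for all $k$ large and all $\lambda>0$,
\[
\phi(\lambda/a_k)^k\le e^{-c'\lambda^\alpha},
\]
after which the integrability of $\lambda^{p-1}e^{-c'\lambda^\alpha}$ on $(0,\infty)$ for every $p>0$ closes the dominated-convergence argument. For $\lambda/a_k$ in a neighbourhood of zero, the sharp lower estimate $-\log\phi(s)\ge(1-\ep)cs^\alpha\ell(1/s)$ combined with Potter's bounds for the slowly varying $\ell$ yields $-k\log\phi(\lambda/a_k)\ge c'\lambda^\alpha$ uniformly in $\lambda$. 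For $\lambda/a_k$ bounded away from zero, say $\lambda/a_k\ge s_0$, the monotone bound $\phi(\lambda/a_k)\le\phi(s_0)<1$ gives geometric decay of $\phi(\lambda/a_k)^k$ in $k$, which on the corresponding region $\lambda\ge s_0 a_k$ converts, via the regular variation of $a_k$ of index $1/\alpha$, into the required exponential decay in $\lambda^\alpha$.

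The finiteness in \eqref{lulu} is then immediate from $\int_0^\infty\lambda^{p-1}e^{-c\lambda^\alpha}\dd\lambda<\infty$. The identification $\e(1/Z^p(1))=\e(\wh Z^{p/\alpha}(1))$ follows from the inverse relation \eqref{e:geninv} and the self-similarity $Z(s)\stackrel{d}{=}s^{1/\alpha}Z(1)$: for $s>0$,
\[
\P(\wh Z(1)>s)=\P(Z(s)<1)=\P(Z(1)<s^{-1/\alpha}),
\]
so $\wh Z(1)\stackrel{d}{=}Z(1)^{-\alpha}$ and hence $\wh Z(1)^{p/\alpha}\stackrel{d}{=}Z(1)^{-p}$.

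Finally, the equivalence with \eqref{l:key} is a change of variables: starting from $\e(W^{-p})=p\int_0^\infty\P(W\le t)t^{-1-p}\dd t$ applied to $W=S_k/a_k$ and $W=Z(1)$, the tail piece $\int_1^\infty$ is uniformly bounded by $p\int_1^\infty t^{-1-p}\dd t<\infty$ and converges to its limit by $S_k/a_k\Rightarrow Z(1)$ together with dominated convergence. Hence convergence of the full moments is equivalent to convergence of the $\int_0^1$ pieces asserted in \eqref{l:key}, which carries the nontrivial small-deviation content of the statement.
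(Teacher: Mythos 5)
You take a genuinely different route: the paper works directly with $\e\bigl((a_k/S_k)^{p'}\bigr)=p'\int_0^\infty\P(S_k\le ta_k)\,t^{-1-p'}\dd t$ for some $p'>p$, bounds $\P(S_k\le ta_k)\le F(ta_k)^k$ via the maximum, and then uses the tail regular variation of $1-F$ together with Potter's inequality to obtain $\exp(-c/t^{\alpha-\delta})$ on $(0,1)$, hence uniform integrability; you instead pass through Laplace transforms. Your pointwise limit, the identification $\e(Z(1)^{-p})=\e(\wh Z(1)^{p/\alpha})$ via inversion and self-similarity, and the equivalence with \eqref{l:key} are all fine and run parallel to the paper's corresponding steps. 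There is, however, a genuine gap in the domination step.

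On the region $\lambda\ge s_0 a_k$ you invoke only $\phi(\lambda/a_k)\le\phi(s_0)<1$, giving $\phi(\lambda/a_k)^k\le\phi(s_0)^k$, a quantity \emph{constant in $\lambda$}. That cannot be controlled by $e^{-c'\lambda^\alpha}$ on this region: the condition $\lambda\ge s_0 a_k$ is a \emph{lower} bound on $\lambda$ for each fixed $k$, so $\lambda$ remains unbounded above there, while $e^{-c'\lambda^\alpha}\to 0$. The remark that geometric decay in $k$ "converts, via the regular variation of $a_k$, into exponential decay in $\lambda^\alpha$" would require $\lambda\ge s_0 a_k$ to force $k$ to be large, but it does the opposite (it caps $k$ from above at roughly $(\lambda/s_0)^\alpha$). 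Worse, the asserted domination $\phi(\lambda/a_k)^k\le e^{-c'\lambda^\alpha}$ for all $\lambda>0$ and all $k$ large is false in general: the behaviour of $\phi(s)$ as $s\to\infty$ is governed by $F$ near $0$, about which the domain-of-attraction hypothesis (a tail condition at $+\infty$) says nothing. If, say, $F(x)$ decays only logarithmically as $x\downarrow 0$, then $\phi(s)$ decays only logarithmically as $s\to\infty$, and $\phi(\lambda/a_k)^k$ decays polylogarithmically in $\lambda$, which no stretched exponential $e^{-c'\lambda^\alpha}$ can dominate. Controlling this large-$\lambda$ (equivalently, small $S_k/a_k$) regime is precisely the nontrivial content of part $(ii)$, and the Laplace route needs an actual substitute there for the paper's $F(ta_k)^k$ estimate rather than the constant bound $\phi(s_0)^k$.
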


We shall also need the following

\begin{lem}\label{correlations} Under the assumptions of Theorem \ref{t:main2}, we set
\begin{equation}\label{xi_k}
\xi_k (t)= \chi_{S_k\leq t a(k)} - \P(S_k \leq  t a(k)),\;\; t\geq 0.
\end{equation} 
Let $\epsilon(\cdot)$ be a positive function which goes to 0 at
infinity. Denoting by 
$\wh a (\cdot)$ the inverse of $a(\cdot)$, set  $l_k= \wh a(\epsilon(k)a(k))$ and let $l$ and $k$ be two positive integers such that $l\leq l_k< k$.  Then we have 
\[
\e(\xi_k (t)\xi_l(t))\leq c\; \P\biggl(\frac{S_l}{a(l)}\leq t\biggr) \;
\biggl(\frac{a(l)}{t \varep(k)a(k)}\biggr)^{\alpha^{-}} + \; \P\biggl(t(1-\varep(k)) \leq \frac{S_k}{a(k)}  \leq t(1+\varep (k))\biggr),
\]
for $l$ large, with $c$  some positive constant and  $0<\alpha^{-}<\alpha$. 
\end{lem}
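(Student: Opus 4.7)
My plan is to exploit the independent--increment structure of the walk. I write $S_k=S_l+T_{k,l}$ with $T_{k,l}:=\sum_{i=l}^{k-1}X_i$ independent of $S_l$ and distributed as $S_{k-l}$, and set $\lambda:=a(l)/a(k)$, $W:=T_{k,l}/a(k)$, $\wt Z_l:=S_l/a(l)$, $p_k:=\P(S_k\leq ta(k))$, $q_l:=\P(\wt Z_l\leq t)$. The hypothesis $l\leq l_k=\wh a(\varep(k)a(k))$ forces $\lambda\leq\varep(k)$; this is the small parameter that makes the whole approximation work.

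Conditioning on $S_l$, using independence of $S_l$ and $T_{k,l}$, and then adding and subtracting $\P(W\leq t)$ inside the integrand, I obtain
\begin{equation*}
\e(\xi_k(t)\xi_l(t))=-\int_0^t\P(t-\lambda u<W\leq t)\,\dd F_{\wt Z_l}(u)+q_l\bigl[\P(W\leq t)-p_k\bigr].
\end{equation*}
Both summands on the right are nonnegative (the second because $S_l\geq 0$), so by the triangle inequality it suffices to bound each.

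For the first summand, $u\in[0,t]$ forces $\lambda u\leq\varep(k)t$, so the integral is bounded by $q_l\,\P(t(1-\varep(k))<W\leq t)$. Since $\lambda\wt Z_l\leq\varep(k)t$ on $\{\wt Z_l\leq t\}$, the event $\{t(1-\varep(k))<W\leq t,\ \wt Z_l\leq t\}$ is contained in $\{t(1-\varep(k))<S_k/a(k)\leq t(1+\varep(k))\}$; independence of $W$ and $\wt Z_l$ then gives
\begin{equation*}
q_l\,\P(t(1-\varep(k))<W\leq t)\leq\P\bigl(t(1-\varep(k))\leq S_k/a(k)\leq t(1+\varep(k))\bigr),
\end{equation*}
producing the second term of the lemma. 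For the second summand, independence yields $\P(W\leq t)-p_k=\int_0^\infty\P(t-\lambda v<W\leq t)\,\dd F_{\wt Z_l}(v)$; I split at $M:=\varep(k)t/\lambda=t\varep(k)a(k)/a(l)$. On $\{v\leq M\}$ the integrand is again $\leq\P(t(1-\varep(k))<W\leq t)$, yielding another copy of the preceding bound. On $\{v>M\}$ I use the trivial bound $\leq 1$ together with Markov's inequality to obtain
\begin{equation*}
q_l\,\P(\wt Z_l>M)\leq q_l\,\e(\wt Z_l^{\alpha^-})/M^{\alpha^-}\leq c\,q_l\,\bigl(a(l)/(t\varep(k)a(k))\bigr)^{\alpha^-},
\end{equation*}
which is precisely the first term of the lemma.

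The only delicate step is the uniform moment bound $\sup_{l\geq l_0}\e(\wt Z_l^{\alpha^-})<\infty$ for some $0<\alpha^-<\alpha$; this is exactly Lemma \ref{convmoments}$(i)$ (de Acosta--Gin\'e), whose finite limit $\e(Z(1)^{\alpha^-})$ furnishes the required uniform bound for all $l$ large. Everything else is mechanical manipulation of a covariance; the cutoff $M$ is forced by matching $\lambda M=\varep(k)t$, so that the window $(t-\lambda M,t]$ for $W$ coincides with the one already appearing in the first term and no new probability of a tight interval must be introduced.
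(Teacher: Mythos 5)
Your decomposition reaches the stated bound by a route that is genuinely different in its algebraic manipulation but shares all the core analytic ingredients with the paper's argument: conditioning on $S_l$ via the independence of $S_l$ and $S_k-S_l$, exploiting $a(l)\leq a(l_k)=\varep(k)a(k)$ (hence $\lambda\leq\varep(k)$), the tail bound via Markov's inequality plus Lemma~\ref{convmoments}$(i)$, and the localization of $S_k/a(k)$ to the window $[t(1-\varep(k)),t(1+\varep(k))]$. The paper instead splits $\int_0^{ta(k)}$ at $ta(l)$ and $ta(l_k)$, factors out $\P(S_l\leq ta(l))$, and invokes the elementary inequality $xy-x'y'\leq(x-x')+(y-y')$ for probabilities in $[0,1]$; you add and subtract $\P(W\leq t)$ and split the resulting integral at the single cutoff $M=\varep(k)t/\lambda$. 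Both are valid, and yours is arguably a little more transparent.

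There is, however, a sign error you should correct. You assert that both summands on the right of
\begin{equation*}
\e(\xi_k(t)\xi_l(t))=-\int_0^t\P(t-\lambda u<W\leq t)\,\dd F_{\wt Z_l}(u)+q_l\bigl[\P(W\leq t)-p_k\bigr]
\end{equation*}
are nonnegative. The second indeed is, since $p_k=\P(\lambda\wt Z_l+W\leq t)\leq\P(W\leq t)$ because $\wt Z_l\geq 0$; but the first carries a minus sign in front of a nonnegative integral, so it is \emph{nonpositive}. This is actually to your advantage: since you only need an upper bound on $\e(\xi_k(t)\xi_l(t))$, simply drop the first summand to get $\e(\xi_k(t)\xi_l(t))\leq q_l[\P(W\leq t)-p_k]$, and your split at $M$ then yields precisely the two terms of the lemma with the stated coefficients. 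If instead one took your sentence literally and added a bound on the (nonexistent) positive first summand, the middle term of the lemma would pick up a spurious factor of two, since your bound on the first summand reproduces another copy of $\P(t(1-\varep(k))\leq S_k/a(k)\leq t(1+\varep(k)))$. Once the sign is corrected, the argument is complete.
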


We now move on to the proofs of these lemmas. 

\begin{proof}[Proof of Lemma \ref{convmoments}, $(ii)$] Since $F$
  belongs to the domain of attraction of  $G_{\alpha}$, we have
\begin{equation}\label{F}
1-F(x)=x^{-\alpha}\; L(x),
\end{equation}

\noindent 
for $L$ some slowly varying function. Now by  \eqref{e:stableclt}, for all $p>0$  we have that
$(a_k/S_k)^p$ converges in law to $1/Z^p(1)$. From page 32 of
\cite{Billingsley68}, the convergence of moments  $(ii)$ will follow
from the uniform integrability of $\left((a_k/S_k)^p\right)_{k\geq
  k_o}$ for 
$k_0$ large, and  it is  enough to check that for some $p'>p$
\begin{equation}\label{ui}
\sup_{k\geq k_0} \e\left(\left(\frac{a_k}{S_k}\right)^{p'} \right) <\infty.
\end{equation}
For any positive random variable $X$,  using Fubini-Tonelli we have $\e(X)=  \int_0^{\infty} \P(X\geq t)\; \d t,$ so
\begin{equation}\label{pookie}
\e\left(\left(\frac{a_k}{S_k}\right)^{p'} \right) =p'\, \left( \int_0^1+\int_1^{\infty} \right) \P(S_k \leq t a_k)\; \frac{\d t}{t^{1+p'}}.
\end{equation}
The integral over $[1,\infty)$ is bounded. We  find an upper bound for  that over $[0,1]$, writing
\[
\P(S_k \leq ta_k)\leq \P(\max_{1\leq i\leq k} X_i \leq ta_k)= \left(\P(X_1\leq ta_k)\right)^k= \bigl(F(ta_k)\bigr)^k,
\]
\noindent 
as the $X_i$ are i.i.d.~with common distribution function $F$. From \eqref{F}, it follows that $\forall t>0$ and $k$ large enough,
\[
\P(S_k \leq ta_k)\leq \exp\left(k\log \left(1-\frac{L(ta_k)}{t^{\alpha} a^{\alpha}_k}\right)\right)
\leq \exp\left(-k   \; \frac{L(ta_k)}{t^{\alpha} a^{\alpha}_k}
\right),\]
\noindent 
where we have used the fact that $L(u)= o(u^{\alpha})$. Now,  $a^{\alpha}_k\sim k L(a_k)$ and by  Potter's Theorem (see \cite{BinghamGoldieTeugels87}, page 25) for any choice of  $A>1$ and $\delta>0$, 
\begin{equation}\label{potter}
\frac{L(y)}{L(x)} \leq A\, \max\left((y/x)^{\delta}\, ,
  (y/x)^{-\delta} \right), \; {\text{for}}\; x,y \;\; {\text {large\, enough.}}
\end{equation}
\noindent Taking $x=ta_k$ and $y=a_k$, this gives a lower bound  for $L(ta_k)/L(a_k)$.  Thus,
\begin{equation}\label{lebesgue}
\P(S_k \leq ta_k)\leq \exp
\left(-\frac{c}{t^{\alpha-\delta}}\right)\;\; \;\;(t\in(0,1)),
\end{equation}
\noindent for some $c>0$, and so the
integral over $[0,1]$ in \eqref{pookie} is finite. This finishes the
proof of \eqref{ui}.

\smallskip

From the definition of $\wh Z$ (see
\eqref{e:geninv}), the fact that
$Z$ is the generalized inverse of $\wh Z$, the continuity
of $\wh Z$, and finally the self--similarity of $Z$, we have:
\begin{equation}\label{popo}
\forall t\geq 0,\;\; \P(\wh Z(1)\leq t)=\P(Z(t)\geq 1)=\P(t^{1/\alpha}\, Z(1)\geq 1)=
\P\left(\frac{1}{Z^{\alpha}(1)} \leq t\right).
\end{equation}
Thus $\wh Z(1)$ has the same law as $1/Z^{\alpha}(1)$, and as  $\wh Z(1)$ has finite moments of all order (see \cite{BinghamGoldieTeugels87},  page 337) we are done with the proof of \eqref{lulu}. Lastly, \eqref{lebesgue} together with Lebesgue's Dominated Convergence Theorem implies \eqref{l:key}, completing the proof of $(ii)$. 
\end{proof}

\begin{proof}[Proof of Lemma \ref{correlations}] From the definition of $\xi_{\cdot} (t)$, we get:
\[
\e(\xi_k (t) \xi_l (t))= \P(  S_l \leq t a(l); S_k \leq t a(k))- \P( S_l \leq t a(l))\; \P( S_k \leq t a(k)), 
\]

\noindent
with $\P(A \,; \,B)$ standing for $\P(A\cap B)$. 

\smallskip

Since $l<k$, $S_l$ is independent of $S_k - S_l$; moreover since the $X_i$ are a.s.~positive we have $S_l < S_k$ a.s. Thus $\e(\xi_k (t) \xi_l (t))$ can be rewritten as
\[
\int_0^{t a(l)} \P(S_k -S_l \leq ta(k)-x)\; \d \P_{S_l} (x) - \P( S_l \leq t a(l)) \; \int_0^{ta(k)} \P(S_k -S_l \leq ta(k)-x) \; \d \P_{S_l} (x) 
\]

\noindent 
where $\P_{S_l}$ denotes the law of $S_l$ or also the pushed--forward measure $S_l^{*} (\P)$. Splitting the last integral $\int_0^{ta(k)}$ into $\int_0^{ta(l)} + \int_{ta(l)}^{ta(k)}$, this equals
\[
\P(S_l > ta(l)) \int_0^{t a(l)} \P(S_k -S_l \leq ta(k)-x) \d \P_{S_l} (x) - 
\P( S_l \leq t a(l))\int_{ta(l)}^{ta(k)} \P(S_k -S_l \leq ta(k)-x) \d \P_{S_l} (x).
\]
Now, since  $a(l_k)=\varep(k) a(k)\leq a(k)$ for $k$ large then 
\begin{eqnarray*}
\e(\xi_k (t) \xi_l (t))&\leq&  \P(S_l > ta(l))\int_0^{t a(l)}  \P(S_k
-S_l \leq ta(k)-x) \d \P_{S_l} (x)\\
&-&\P( S_l \leq t a(l))\int_{ta(l)}^{t a(l_k)}\P(S_k -S_l \leq ta(k)-x) \d \P_{S_l} (x).\end{eqnarray*}
Next for $x\leq t a(l_k)=t \varep(k) a(k)$ we have $\P(S_k -S_l \leq ta(k)(1-\varep(k))) \leq \P(S_k -S_l \leq ta(k)-x)$; moreover for any $x>0$, $\P(S_k -S_l \leq ta(k)-x)\leq \P(S_k -S_l \leq ta(k))$. Thus $\e(\xi_k (t) \xi_l (t))$ is less than or equal to $\P({S_l} \leq ta(l))$ times 
\begin{eqnarray*}
& &\P(S_l>t a(l)) \; \P(S_k -S_l \leq ta(k))- \P(ta(l)< S_l \leq t\varep(k) a(k)) \;  \P(S_k -S_l \leq ta(k)(1-\varep(k)))\\
&\stackrel{def}{=}& \P({S_l} \leq ta(l))\,(x y - x' y')
\end{eqnarray*}
with $x,y,x^{'}, y^{'} \in [0,1]$ such that $x^{'}\leq x$ and $y^{'}\leq y$. Since 
$xy-x^{'} y^{'} \leq (x-x^{'}) + (y-y^{'})$, putting all the previous pieces together gives:
\begin{eqnarray*}
\e(\xi_k (t) \xi_l (t))\leq \P(S_l\leq t a(l)) \biggl(\P(S_l > t\varep(k) a(k)) + \P( t(1-\varep(k))a(k) \leq S_k -S_l \leq ta(k)) \biggr)\\
=\P(S_l\leq t a(l)) \P(S_l > t\varep(k) a(k)) + \P\biggl(S_l \leq ta(l);\;  t(1-\varep(k))a(k) \leq S_k -S_l \leq ta(k)\biggr)
\end{eqnarray*}
\noindent as $S_l$ is independent of $S_k-S_l$. Now since $a(l)\leq a(l_k)=\varep(k) a(k)$, this is
\begin{eqnarray*}
&\leq& \P\biggl(\frac{S_l}{a(l)}\leq t\biggr) \; \P\biggl(\frac{S_l}{a(l)} > t
 \frac{a(l_k)}{a(l)}\biggr) + \P\biggl(t(1-\varep(k)) \leq \frac{S_k}{a(k)}  \leq t(1+\varep (k))\biggr)\\
&\leq& c \;\P\biggl(\frac{S_l}{a(l)}\leq t\biggr) \; 
\biggl(\frac{a(l)}{t a(l_k)}\biggr)^{\alpha^{-}} + \; \P\biggl(t(1-\varep(k)) \leq \frac{S_k}{a(k)}  \leq t(1+\varep (k))\biggr),
\end{eqnarray*}
\noindent 
where we have used Markov's inequality, part $(i)$ of
Lemma \ref{convmoments} (with $\beta=\alpha^{-}$ and $a_k\equiv a(k)$)
to guarantee that  $\e({S_l}/{a(l)})^{\alpha^{-}}\leq c$ for some
constant $c>0$. This  finishes the proof of Lemma \ref{correlations}. 
\end{proof}

\subsection{The proof of (\ref{main})}\label{ss:proof of main}

 Using the previous two lemmas,
this will follow from the result we now state and prove. 

\begin{lem}\label{NtoS}  Under the assumptions of  Theorem
  \ref{t:main2}, there exists  some constant 
$c>0$
such that for $n_0$ large enough, we have
 \begin{equation}\label{letruc}
\limsup_{n\to\infty} \frac{1}{\log n} \sum_{n_0\leq  k \leq n} \frac{1}{k} 
\left(\frac{a (k)}{ S_k}\right)^{\alpha p} \leq c,\;\;\;\; a.s.
\end{equation}
\end{lem}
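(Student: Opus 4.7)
The plan is a second-moment (variance) argument in the spirit of the Ibragimov--Lifshits log-average almost-sure limit theorems, combined with a Borel--Cantelli step along a sparse subsequence. Set $U_k := (a(k)/S_k)^{\alpha p}$ and $m_k := \e(U_k)$. Lemma \ref{convmoments}(ii) gives $m_k\to c' := \e(\wh Z(1)^p)$, hence by Ces\`aro summation
\[
\frac{1}{\log n}\sum_{n_0\le k\le n}\frac{m_k}{k}\longrightarrow c'.
\]
It therefore suffices to show that the log-average $V_n := \frac{1}{\log n}\sum_{n_0\le k\le n} U_k/k$ converges almost surely to $c'$; the lemma then follows for any $c>c'$.

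Using the layer-cake identity $U_k = \alpha p\int_0^\infty \chi_{\{S_k\le t\, a(k)\}}\,\dd t/t^{1+\alpha p}$ and Tonelli, we write $V_n = \alpha p\int_0^\infty W_n(t)\,\dd t/t^{1+\alpha p}$, where
\[
W_n(t) := \frac{1}{\log n}\sum_{n_0\le k\le n}\frac{\chi_{\{S_k\le t a(k)\}}}{k}.
\]
I would first establish pointwise a.s.~convergence $W_n(t)\to G_\alpha(t) := \P(Z(1)\le t)$ for each fixed $t>0$ via a variance estimate:
\[
\operatorname{Var}(W_n(t)) \le \frac{2}{(\log n)^2}\sum_{n_0\le l<k\le n}\frac{|\e(\xi_k(t)\xi_l(t))|}{kl} + O\Bigl(\frac{1}{\log n}\Bigr).
\]
Choose $\varep(k) = 1/\log k$ and split the off-diagonal sum at $l = l_k = \wh a(\varep(k) a(k))$: on the ``far'' range $l\le l_k$, apply Lemma \ref{correlations} (each term is $O(\varep(k)^{\alpha^-})$ modulo the local anti-concentration contribution), while on the ``near'' range $l_k < l < k$ use the trivial bound $|\e(\xi_k\xi_l)|\le 1$ together with $\sum_{l_k<l<k}1/l = O(|\log\varep(k)|)$. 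This yields $\operatorname{Var}(W_n(t)) = o(1)$, so Chebyshev combined with Borel--Cantelli along an exponentially spaced subsequence $n_j = \lfloor \exp(j^2)\rfloor$ (on which the variance is summable), followed by monotonicity of $\log n\cdot W_n(\cdot)$ in $n$ to sandwich intermediate indices, delivers $W_n(t)\to G_\alpha(t)$ a.s.~for each $t$ in a countable dense set.

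To finish, use that $W_n(\cdot)$ is monotone nondecreasing in $t$ and $G_\alpha$ is continuous to promote pointwise convergence on a countable dense set to uniform convergence on compacts (Dini/Glivenko--Cantelli style). Dominated convergence then gives
\[
V_n \longrightarrow \alpha p\int_0^\infty G_\alpha(t)\,\frac{\dd t}{t^{1+\alpha p}} = \e\Bigl(\frac{1}{Z(1)^{\alpha p}}\Bigr) = \e(\wh Z(1)^p) = c' \quad \text{a.s.},
\]
invoking \eqref{lulu} (and the identity \eqref{popo}) for the last equality.

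The main obstacle is controlling $W_n(t)$ uniformly enough in $n$ near $t=0$ to apply dominated convergence: the weight $t^{-1-\alpha p}$ is non-integrable against a constant, so one must exploit the super-polynomial decay given by the Potter-type bound \eqref{lebesgue} applied to individual terms $\P(S_k\le t a(k))$. Tuning $\varep(k)\downarrow 0$ slowly enough that the ``near-diagonal'' contribution $|\log\varep(k)|/\log n$ vanishes while the ``far'' factor $\varep(k)^{\alpha^-}$ in Lemma \ref{correlations} remains summable against the singular weight---and verifying that the residual anti-concentration term $\P(t(1-\varep)\le S_k/a(k)\le t(1+\varep))$ also integrates to $o(1)$ against $\dd t/t^{1+\alpha p}$---is the principal technical balancing act.
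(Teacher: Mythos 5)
Your overall toolbox --- second moment bound, Ibragimov--Lifshits sparse subsequence, Borel--Cantelli, monotonicity sandwich --- is exactly what the paper uses, and the split of the off-diagonal covariance at $l_k = \wh a(\varep(k)a(k))$ via Lemma \ref{correlations} is the right mechanism. Where the arguments part ways is in where the $t$-integral sits relative to the second moment. The paper puts it \emph{inside}: it defines the scalar random variable $I_n = \int_0^1 F_n(x)\,\dd x/x^{1+\beta}$ and bounds $\e(I_n)$ and $\text{Var}(I_{n_m})$ directly, so the Borel--Cantelli conclusion is already about the integrated quantity and no further passage to the limit is needed. You instead fix $t$, control $\text{Var}(W_n(t))$, obtain pointwise a.s.\ convergence $W_n(t)\to G_\alpha(t)$, and then integrate against $\dd t/t^{1+\alpha p}$ at the end --- and the dominated convergence step you flag is a genuine gap, not a technicality, and the fix you propose does not close it. The Potter-type bound \eqref{lebesgue} controls $\P(S_k\le ta(k)) = \e(\chi_{\{S_k\le ta(k)\}})$, hence $\e(W_n(t))$, but not the random function $W_n(\cdot)$ itself: pathwise, $\chi_{\{S_k\le ta(k)\}}$ equals $1$ for every $t\ge S_k/a(k)$, and $\min_{n_0\le k\le n}S_k/a(k)$ drifts to $0$ as $n\to\infty$, so there is no $n$-uniform, a.s.\ dominating function for $W_n$ integrable against $t^{-1-\alpha p}\,\dd t$ near $0$. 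Pointwise a.s.\ convergence only gives you Fatou, i.e.\ $\liminf V_n\ge c'$, which is the wrong inequality for the lemma. Worse, the step is circular: unwinding the layer-cake identity, the tail you must control is
\[
\alpha p\int_0^{t_0}W_n(t)\,\frac{\dd t}{t^{1+\alpha p}} \;=\; \frac{1}{\log n}\sum_{\substack{n_0\le k\le n\\ S_k/a(k)<t_0}}\frac{1}{k}\Bigl(\bigl(a(k)/S_k\bigr)^{\alpha p} - t_0^{-\alpha p}\Bigr),
\]
which is (a truncated form of) precisely the quantity \eqref{letruc} asks you to bound; controlling it uniformly in $n$ requires essentially the lemma itself. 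The paper sidesteps this by never extracting a pointwise limit of $F_n(\cdot)$: the variance of the \emph{integrated} quantity $I_n$ is estimated head-on (terms (I)--(VI) of the proof), and the conclusion is only a $\limsup$ bound with a safety factor $2\gamma$, not a.s.\ convergence to a limit. To salvage your scheme you would have to rerun the variance computation on the truncated integral $\int_0^{t_0}$ with estimates uniform in $t_0$ --- at which point you are carrying out the paper's argument in a less convenient order.
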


We begin by proving the lemma then move on to showing how \eqref{main} can be derived from this. 
\begin{proof}[Proof of Lemma \ref{NtoS}]
We write 
\[
P_n \equiv \left(\sum_{n_0 \leq k \leq n} \frac{1}{k}\right)^{-1}\,\sum_{n_0 \leq k \leq n} \frac{1}{k} \delta_{S_k /a(k)},
\]

\noindent 
where $\delta_b$ is Dirac mass at $b$. So proving (\ref{letruc}) is equivalent to showing that 
\begin{equation}\label{principal}
\limsup_{n\to\infty} \int \phi \; \d P_n\; \leq c,\;\;\; a.s.
\end{equation}
where 
\[
\phi (x) = x^{-\beta} (x>0),\;\;\;\; \beta\equiv \alpha p,\;\;\;\; (\alpha<1, \,p>1).
\]

 From the 
a.s.~CLT, see \cite{BerkesDehling93}, page 1643, the sequence of probability measures $(P_n)$ 
converges a.s.~weakly to $Z(1)$. It then follows from page 31 of \cite{Billingsley68} that a.s.~for any {\it bounded} function $\psi$ with  Lebesgue--negligible set of discontinuities, $\int \psi \; \d P_n$ converges to $\e(\psi(Z(1))$, as $n\to\infty$.  

\smallskip

This does not apply to our function $\phi$ as it is not bounded. We circumvent this difficulty by first  expressing $\int \phi\, \d P_n$ as:
\[
\int \phi \; \d P_n =\int_0^{\infty} P_n ((0, \frac{1}{t^{1/\beta}}])
\, \d t=\beta\, \left(\int_0^1+\int_1^{\infty}\right) P_n ((0,x])\, \frac{\d x}{x^{1+\beta}},
\]

\noindent
where we have used a Fubini argument. The integral over $[1,\infty)$
is  bounded so  proving \eqref{principal} is equivalent to checking that 
\[
\limsup_{n\to \infty}\int_0^1 F_n (x) \; \frac{\d x}{x^{1+\beta}} \leq c, \;\;\;\; a.s. 
\]
\noindent 
where
\[
F_n (x) = \frac{1}{\log n} \sum_{n_0 \leq k \leq n} \frac{1}{k} \;
\chi_{\frac{S_k}{a(k)}\leq x} \;\;\biggl( =\frac{1}{\log n}\left(\sum_{n_0 \leq k \leq n} \frac{1}{k}\right)\, P_n((0,x]) \biggr).
\]
Defining $F_{\infty} (x)\equiv\P(Z(1)\leq x)$, we shall prove that a.s.:
\begin{equation}\label{ibragimov}
\limsup_{n\to\infty} \int_0^1 F_n (x) \frac{\d x}{x^{1+\beta}} \leq 2 \int_0^1 F_{\infty} (x) \frac{\d x}{x^{1+\beta}}.
\end{equation}

\smallskip
For simplicity, we set
\[
I_n \equiv  \int_0^1 F_n (x) \frac{\d x}{x^{1+\beta}}\;\; \text{and} \; \; I_{\infty} \equiv  \int_0^1 F_{\infty} (x) \frac{\d x}{x^{1+\beta}};
\]
note that by $(ii)$ of Lemma \ref{convmoments}, $I_{\infty}<\infty.$

\smallskip

Following the proof of Lemma 2 of 
\cite{IbragimovLifshits98}, it is
enough to show that the  $\limsup$ in \eqref{ibragimov} can be taken
along the subsequence $n_m \equiv [e^{{\gamma}^m}]$, for
$\gamma>1$. Indeed, from the definition of $F_n$ the mapping $n\mapsto
\log n \times F_n$ is nondecreasing and hence so is $n\mapsto \log n \times I_n$. Thus for all $n_{m-1} \leq n \leq n_m$:
\[
I_n\leq I_{{n_m}} \; \frac{\log n_m}{\log n}\leq I_{{n_m}} \; \frac{\log n_m}{\log n_{m-1}}
\]

\noindent and the last ratio  is asymptotically equivalent to $\gamma$.

Accordingly,  assuming  that a.s.~$\limsup_{m} I_{n_m} \leq 2 I_{\infty}$, this yields:
\[
\limsup_{n\to \infty} I_n \leq \gamma \limsup_{m\to\infty} I_{n_m}\leq 2\gamma I_{\infty},\;\; a.s.
\]
\noindent 
which delivers \eqref{ibragimov} by letting $\gamma$ decrease to one. 

\smallskip

We now prove that $\limsup_{m} I_{n_m} \leq 2 I_{\infty}$, a.s. By Borel--Cantelli, this would follow from:
\begin{equation}\label{variance}
\limsup_{n\to\infty} \; \e(I_n)\leq I_{\infty}\; \;\text{and}\; \;  \sum_{m} \text{Var}(I_{n_m})<\infty.
\end{equation}
Beginning with $\e(I_n)$, we rewrite it as:
\[
\e(I_n)=\frac{1}{\log n} \sum_{n_0\leq k\leq n} \frac{1}{k} \;
\int_0^1 \, \P(S_k \leq a(k) x)\, \frac{\d x}{x^{1+\beta}}.
\] Using \eqref{l:key}, the above  integral converges to $\int_0^1
\P(Z(1)\leq x) \, x^{-1-\beta}\, \d x$, thus a fortiori its log average does, giving that $
\lim_{n\to \infty} \e(I_n)  = I_{\infty}.$

\medskip

We move on to considering $\text{Var} (I_n)$. By definition, 
\[
\text{Var} (I_n) = \e\left(\int_0^1 (F_n(t) - \e(F_n (t)) \; \frac{\d t}{t^{1+\beta}}\right)^2 = \e\left(\int_0^1 T_n (t) \frac{\d t}{t^{1+\beta}}\right)^2,
\]
\noindent
where 
\begin{equation}\label{T_n}
T_n (t) =  \frac{1}{\log n} \sum_{n_o\leq k\leq n} \frac{\xi_k (t)}{k},
\end{equation}
\noindent with  $\xi_k(t)$ defined as in \eqref{xi_k}. So for any
$\ep>0$ small enough, by Jensen's inequality, we have:
\[
\text{Var}(I_n)=
\frac{1}{\ep^2}\; \e\biggl(\int_0^1 \frac{T_n (t)}{t^{\beta +\ep}}\; {\ep}\; \frac{\d t}{t^{1-\ep}}\biggr)^2\leq \frac{1}{\ep}\; 
\int_0^1 \frac{\e(T_n^2 (t))}{t^{1+2\beta +\ep}} \, \d t.
\]
\noindent 
So as to prove that this last integral is summable along $(n_m)$, we rewrite it as:
\begin{eqnarray*}
& & \int_0^1  \frac{\e(T_n^2 (t))}{t^{1+2\beta +\ep}}  \d t= \int_0^1 \frac{1}{\log^2 n}\left( \sum_{n_0\leq k,l\leq n} \frac{1}{kl}\,\e(\xi_k (t) \xi_l (t)) \right)\; \frac{\d t}{t^{1+2\beta +\ep}}\\
&=&\int_0^1 \frac{1}{\log^2 n}\left( \sum_{n_0\leq k\leq n} \frac{1}{k^2} \, \e(\xi^2_k (t)) \right)\; \frac{\d t}{t^{1+2\beta +\ep}} +2 \int_0^1 \frac{1}{\log^2 n}\left( \sum_{n_0\leq l<k\leq n} \frac{1}{kl} \, \e(\xi_k (t) \xi_l (t)) \right)\; \frac{\d t}{t^{1+2\beta +\ep}}\\
&\stackrel{def}{=}& \text{(I)} +2\; \text{(II)}.
\end{eqnarray*}

\smallskip

Let us begin with $\text{(I)}$. By definition of $\xi_k (t)$, we have that $\e(\xi_k (t)^2)\leq \P(S_k \leq ta(k))$. Accordingly,  
\[
\text{(I)}\leq \frac{1}{\log^2 n} \sum_{n_0\leq k\leq n} \frac{1}{k^2} \int_0^1\P(S_k \leq ta(k)) \frac{\d t}{t^{1+2\beta +\ep}}.
\]
From  \eqref{l:key},  the above integral is  bounded hence so is
the last sum, and therefore  
\[
\text{(I)}= O\left(\frac{1}{\log^2 n}\right),
\]

\noindent 
which is summable  along $(n_m)$ (recall that $n_m = [e^{\gamma^m}]$
with $\gamma>1$).

\medskip
 We next show that the same holds true for $\text{(II)}$. 

\medskip

Setting $l_k\equiv \wh a(\varep(k) a(k))$ with $\varep(t) \equiv
\frac{1}{(\log \log t)^2}$ and $\wh a(\cdot)$ the inverse of $a(.)$,
the sum involved in $\text{(II)}$ splits into two parts according as
to whether $l\leq l_k$ or not; thus,
\[
 (\text{II})=\frac{1}{\log^2 n} \biggl( \sum_{n_0\leq k\leq n }
 \sum_{n_0\leq l\leq l_k} + \sum_{n_0\leq k\leq n} \; \sum_{l_k < l < k} \biggr) \frac{1}{kl} \int_0^1  \e(\xi_k (t) \xi_l (t)) \frac{\d t}{t^{1+2\beta+\ep}}\;\stackrel{def}{=} (\text{III})+(\text{IV}). 
\]

We begin by estimating the covariance $\e(\xi_k (t) \xi_l (t))$ and start with the case where $l_k<l <k.$

\smallskip

Since $\e(\xi_k^2(t))\leq \P(S_k \leq ta(k))$,  the 
Cauchy--Schwarz inequality yields
\[
\e(\xi_k (t) \xi_l (t))\leq \sqrt { \e(\xi_k (t)^2) \e(\xi_l (t)^2)}\leq \sqrt{ 
\P(S_k \leq ta(k))}. 
\]

Hence  
\begin{eqnarray*}
(\text{IV})&\leq& \frac{1}{\log^2 n} \sum_{n_0\leq k\leq n} \frac{1}{k} \biggl(\int_0^1\sqrt{ \P(S_k \leq ta(k))} \frac{\d t}{t^{1+2\beta+\ep}}\biggr)  \sum_{l_k<l< k} \frac{1}{l}\\
&\leq&  \frac{2 }{\log^2 n} \sum_{n_0\leq k\leq n} \frac{1}{k} \; \log\frac{k}{[l_k]} \; \biggl( \int_0^1
\sqrt{\frac{\P(S_k \leq ta(k))}{t^{1+4\beta+2\ep}}} \; \frac{1}{2}\; \frac{\d t}{\sqrt t}\biggr)\\
&\leq& \frac{\sqrt{2 }}{\log^2 n} \sum_{n_0\leq k\leq n} \frac{1}{k} \; \log
\frac{\wh a(a(k))}{[\wh a(\varep(k)a(k))]} \; \sqrt {   \int_0^1
\frac{\P(S_k \leq ta(k))}{t^{1+4\beta+2\ep}} \; 
\frac{\d t}{\sqrt t}}
\end{eqnarray*}

\noindent
by Jensen's inequality. Now using \eqref{l:key},  the integral lying
under the square root above stays  bounded. Next,  since $\wh
a(\cdot)$ is regularly varying of index  $\alpha$,  Potter's Theorem
\eqref{potter} states that picking arbitrary $A>1$ and
$0<\rho<\alpha$, for $x\leq y$ large enough, we have $
\wh a(y)/\wh a(x) \leq A \left(y/x\right)^{\alpha+\rho}.$ Using this
together with an easy computation, by our  choice of $\varep(\cdot)$, we arrive at:
\begin{eqnarray*}
(\text{IV})&=& O\left(\frac{1}{\log^2 n} \sum_{n_0\leq k\leq n} \frac{1}{k} \; \log
\frac{\wh a(a(k))}{\wh a(\varep(k)a(k))}\right)+\, O\left(\frac{1}{\log^2 n}\right)\\
&=& O\left(\frac{1}{\log^2 n} \sum_{n_0\leq k\leq n} \frac{1}{k} \log \log \log k\right)\, +\, O(\frac{1}{\log n})=O\left( \frac{ \log \log \log n}{\log n}\right),
\end{eqnarray*}
which is summable along $(n_m)$. 

\smallskip

Now we turn to $(\text{III})$. Using the estimate of  $\e(\xi_k (t) \xi_l (t))$ for $l\leq l_k$  given in Lemma \ref{correlations}, we write
\[
\text{(III)}\leq \frac{c}{\log^2 n} \sum_{n_0\leq k\leq n} \frac{1}{k{(\varep(k)a(k))}^{\alpha^{-}}} \sum_{n_0\leq l\leq l_k}   \frac{{a(l)}^{\alpha^{-}}}{l} \int_0^1 \P\left(\frac{S_l}{a(l)}\leq t\right) \frac{\d t}{t^{1+2\beta+\ep+{\alpha}^{-}}}
\]
\begin{eqnarray*}
&+& \frac{1}{\log^2 n} \sum_{n_0\leq k\leq n} \frac{1}{k}\int_0^1 \P\biggl(t(1-\varep(k)) \leq \frac{S_k}{a(k)}  \leq t(1+\varep (k))\biggr)\frac{\d t}{t^{1+2\beta+\ep}}   \sum_{n_0\leq l\leq l_k} \frac{1}{l}\\
&\stackrel{def}{=}& \text{(V)}+\text{(VI)}.
\end{eqnarray*}

We start off with $\text{(V)}$.  Since $a(\cdot)$ is regularly varying
of index  $1/\alpha$ then so is $a(l)^{\alpha^{-}}/l$ with index $\frac{\alpha^{-}}{\alpha}-1>-1$. By Karamata's Theorem (\cite{BinghamGoldieTeugels87}, page 26), $n_0$ being large, we have:
\[
\sum_{n_0\leq l\leq l_k} \frac{a(l)^{\alpha^{-}}}{l} \sim \frac{\alpha}{\alpha^{-}} 
a(l_k)^{\alpha^{-}}= \frac{\alpha}{\alpha^{-}} (a(k) \varep(k))^{\alpha^{-}}. 
\]

This together with  \eqref{l:key} yields $
\text{(V)}=O\left( \frac{1}{\log n}\right),$ which is summable along $(n_m)$. 

\medskip

As for $\text{(VI)}$, we write
\begin{eqnarray*}
&\int_0^1&\P\biggl(t(1-\varep(k)) \leq \frac{S_k}{a(k)}  \leq t(1+\varep (k))\biggr)\frac{\d t}{t^{1+2\beta+\ep}}\\
&\leq& \e\left(\int_{\frac{S_k}{(1+\varep(k)) a(k)}}^{\frac{S_k}{(1-\varep(k)) a(k)}}   \frac{\d t}{t^{1+2\beta+\ep}}\right) =O\left( \varep(k) \; \e\biggl(
\biggl(\frac{a(k)}{S_k} \biggr)^
{2\beta+\ep}\biggr)\right)=O(\varep(k)),
\end{eqnarray*}
\noindent 
 by \eqref{lulu}. Now, since $a(\cdot)$ is regularly varying of index 
 $1/\alpha$ and $\varep(\cdot)$ is slowly varying, we have that $\log
 l_k = \log \wh a(\varep (k) a(k)) \sim \alpha \log (\varep(k)
 a(k))\sim \log k$. It follows, by the
 choice of $\varep(\cdot)$ that:
\[
\text{(VI)}=O\left(\frac{1}{\log^2 n} \sum_{n_0\leq k\leq n} \frac{\varep(k) }{k} \log l_k \right)=O\left( \frac{1}{(\log \log n)^2}\right),
\]

\noindent which is summable along $(n_m)$. 

\smallskip

We have finished the proof that 
$\text {Var}(I_n)$ is summable along $(n_m)$; this guarantees
\eqref{variance}, hence \eqref{ibragimov}  and then finally \eqref{principal}. The proof of Lemma \eqref{NtoS} is now complete.
\end{proof}

\subsection{Proving that \eqref{main} follows from Lemma
  \ref{NtoS}}\label{ss:follows from lema}
From \eqref{hNbar}, we have   a fortiori that  $|h({\bbar N}(t))-\wh Z(t)| = o(t^{\alpha})$ a.s.~(log). Hence, a.s.~there exists a set of times ${\cal B}$ of log density zero such that for all $t\notin{\cal B}$, 
$| h({\bbar N}(t))-\wh Z(t)| = o(t^{\alpha})$. As a result,
\[
\lim_{T\to \infty} \frac{1}{\log T} \int_1^T \frac{|h({\bbar N}(t))- \wh Z (t) |}{t^{\alpha}}\; \chi_{t\notin {\cal B}} \frac{\d t}{t}=0,\;\;\; a.s.
\]
\noindent 
where $\chi_A$ is the indicator function of the set $A$. 

\smallskip

So proving that the first limit in (\ref{main}) is a.s.~zero is
equivalent to showing that almost surely,
\begin{equation}\label{main1}
\frac{1}{\log T} \int_1^T \frac{|h({\bbar N}(t))- \wh Z (t)
  |}{t^{\alpha}}\; \chi_{t\in {\cal B}} \frac{\d t}{t}
\longrightarrow 0,\;\;\; \;\;\;T\to\infty.
\end{equation}

From H\"older's inequality, for any $p,q>1$ such that $1/p + 1/q =1$,
the above  is 
\begin{equation}\label{holder}\leq  \left\{\frac{1}{\log T} \int_1^T \left(\frac{|h({\bbar N}(t))- \wh Z (t) |}{t^{\alpha}}\right)^p\; \frac{\d t}{t} \right\}^{1/p} \times \left\{\frac{1}{\log T} \int_1^T  \chi_{t\in {\cal B}} \frac{\d t}{t} \right\}^{1/q}.
\end{equation}
As $T\to\infty$, the last term  approaches the log density of $\cal B$ to the power $1/q$, which is zero. So proving that the first term in \eqref{holder} is eventually bounded would  yield (\ref{main1}). 

\smallskip

Now, since $p>1$, the mapping $x\mapsto |x|^p$ is convex, and so for
all $x,y\geq 0$ we have that $|x-y|^p \leq 2^{p-1} (x^p + y^p)$. Applying this for $x=h(\bbar N(t))$ and $y=\wh Z(t)$, we are done so long as   the log averages of $(\wh Z(t) /t^{\alpha})^p$ and $(h(\bbar N(t))/t^{\alpha})^p$ are bounded.

\smallskip

Since the scaling flow $\wh \tau_t$ for $\wh Z$ is ergodic, using  Birkhoff's ergodic theorem, a.s.
\[
\lim_{T\to \infty} \frac{1}{\log T} \int_1^T \left(\frac{\wh Z (t)}{t^{\alpha}}\right)^p\; \frac{\d t}{t}=\lim_{T\to \infty} \frac{1}{T} \int_0^T ( {\wh \tau}_t (\wh Z)(1))^p \d t =\e(\wh Z(1)^p)<\infty.
\]
So it remains to prove that for some positive and finite constant $c$ we have:
\begin{equation}\label{djoudjou}
\limsup_{T\to\infty}\frac{1}{\log T} \int_1^T \left(\frac{h({\bbar N}(t))}{t^{\alpha}}\right)^p\; \frac{\d t}{t} \leq c,\;\;\; a.s.
\end{equation}
The idea is to decompose the above integral on the partition $([S_{n}, S_{n+1}[)$, $n\geq 0$.  Since $\bbar N$ is a renewal process, 
 for any $S_{n} \leq t < S_{n+1}$ we have that $\bbar N(t)=n$. Thus, it is enough to prove that for $n_0$ large enough, 
\begin{equation}\label{owie}
\limsup_{T\to\infty}  \frac{1}{\log T} \sum_{n_0 \leq n \leq [N(T)]}
h^p (n) \int_{S_{n}}^{S_{n+1}} \frac{\d t}{t^{1+p\alpha}} \leq c\;\;\;\;\;\; a.s. 
\end{equation}

For this, writing $S$ for the polygonal interpolation of $S_n$ and $N\equiv S^{-1}$,  we claim that for all $A$ large enough, a.s.~$N(T)\leq AT$, for $T$ large. Indeed,  we can write
\[
\P(N(T)>AT)= \P(S(AT)<T)\leq \P(S_{[AT]}<T)\leq  e^T\, \left(\e (e^{- X_1})   \right)^{[AT]}
\]

\noindent using Markov's inequality. As $X_1>0$ a.s., the Laplace
transform $\e (e^{- X_1})$ is $<1$, so $\P(N(T)>AT)$ is  exponentially
small for $A$ large enough and we are done by the Borel--Cantelli
Lemma.  Thus we are reduced to proving \eqref{owie} with $[AT]$
replacing $[N(T)]$. But
\begin{eqnarray}
& & \limsup_{T\to\infty}   \frac{1}{\log T} \sum_{n_0 \leq n \leq [AT]}
h^p (n) \int_{S_{n}}^{S_{n+1}} \frac{\d t}{t^{1+p\alpha}}  \label{owie1}\\
 &\leq& \limsup_{T\to\infty} \biggl( \frac{1}{\alpha p \log T} \left(
\sum_{n_0 \leq n \leq [AT]} \frac{h^p (n) - h^p (n-1)}{S_n^{\alpha
    p}} \right)+ \frac{1}{\alpha p \log T}\,\frac{h^p
(n_0)}{S_{n_0}^{\alpha p}}\biggr). \label{2nd}
\end{eqnarray}

The last term of \eqref{2nd} a.s.~converges to $0$ as $T\to
\infty$. And, recalling that $h(\cdot)=a^{\alpha}(\cdot)$ is
regularly varying of index  $1$ and with regularly varying derivative,
Karamata's theorem (see \cite{BinghamGoldieTeugels87} page 12-28)
yields $sh'(s) \sim h(s)$, so for $x$ large we have:
\[
h^p (x+1)-h^p (x) = \int_x^{x+1} (h^p)'(s) \d s = \int_x^{x+1}
p\;\frac{h^p (s)}{s}\;\frac{sh'(s)}{h(s)} \d s  \sim p\,\frac{h^p (x)}{x}.  
\] 
Thus the lim sup in \eqref{owie1}  stays a.s.~bounded by Lemma
\ref{NtoS}, and we have just showed that Lemma \ref{NtoS} implies \eqref{djoudjou}, which in turn delivers \eqref{main1} and thus the first equality in \eqref{main}. 

\smallskip

We note that since $h(\cdot)$, $\bbar N$ and $\wh Z$ are
nondecreasing, it is straightforward to check (following the same pattern as before) that \eqref{main1} holds true with $||h\circ\bbar N-\wh Z||_{[0,t]}$ replacing $|h(\bbar N(t))-\wh Z(t)|$. This shows the second identity in \eqref{main}. The proof of  Theorem \ref{t:main2} is now complete. 

\medskip

We move on to showing how Corollary  \ref{c:ChungErd} follows from Theorem \ref{t:main2}.

\begin{proof}[Proof of Corollary  \ref{c:ChungErd}] As noted in the introduction, since ${\wh \tau}_t$ is ergodic for $\wh Z$, Birkhoff's ergodic theorem gives \eqref{e:logaverageML}. Thus,  Theorem \ref{t:main2}  implies that a.s. 
\[
\lim_{T\to \infty} \frac{1}{\log T} \int_1^T \frac{h(\bbar N(t))}{t^{\alpha}} \frac{\d t}{t} = \e(\wh Z(1)).
\]

This is not quite the statement of Corollary \ref{c:ChungErd}. For this we need to run the reasoning we used in the proof of Theorem \ref{t:main2}, starting with \eqref{hN} instead of \eqref{hNbar}: 
\[
\biggl|\frac{\bbar N(t)}{\wh a(t)} -  \frac{\wh Z(t)}{t^{\alpha}}\biggr | \to 0 \; \;\;a.s. \; (\log);
\]
\noindent 
then, just as we proved that \eqref{main1} followed from
\eqref{djoudjou}, all we have to check is that $\forall p>1$ 
\begin{equation}\label{furst}
 \limsup_{T\to \infty} \frac{1}{\log T} \int_1^T \left(\frac{{\bbar N}(t)}{\wh a(t)}\right)^{p}\; \frac{\d t}{t}\leq c,\;\;\; a.s.
\end{equation}
\noindent 
for some positive constant $c$. As $\wh a(\cdot)$ is regularly varying of index  $\alpha$, by Potter's Theorem \eqref{potter},  
\[
\biggl(\frac{\bbar N(t)}{\wh a(t)} \biggr)^p\leq A\, 
 \max\biggl(\biggl(\frac{h(\bbar N(t))}{t^{\alpha}}\biggr)^{p_{+}},\biggl(\frac{h(\bbar N(t))}{t^{\alpha}}\biggr)^{p_{-}}\biggr)\leq A\, \biggl(\frac{h(\bbar N(t))}{t^{\alpha}}\biggr)^{p_{+}} + A\, \biggl(\frac{h(\bbar N(t))}{t^{\alpha}}\biggr)^{p_{-}} ,
\]
\noindent 
for $A>1$, $\delta>0$ small enough, $t$ large enough, $p_{+}=p(1+\delta)$ and $p_{-}= p(1-\delta)$. 

\smallskip

Choosing $\delta$ small enough so that $p_{-}>1$ then applying
\eqref{djoudjou} for $p=p_{-}$ and $p=p_{+}$ yields \eqref{furst}. As
a result, $\bbar N(t)/\wh a(t)$ and $\wh Z$ share the same log
average,  finishing the proof of Corollary \ref{c:ChungErd}. 
\end{proof}

\section{Cocycles, special flows and the ergodic theorem}\label{s:cocycles}
\subsection{Special flows}\label{ss:specialflows}
First we recall the ergodic theory construction of a {special flow}. 
Given an invertible  measure--preserving 
transformation $T$ of a measure space $(B,\Cal A,\mu_B)$ (referred to
as $(B,\Cal A,\mu_B, T)$ or more simply as $(B,\mu_B, T)$) 
and a measurable function 
$r:\, B\to (0,+\infty)$, we form the space
\begin{equation}
  \label{eq:special flow}
X\equiv \{ (x,t):\, x\in B, \, 0\leq t\leq r(x)\}  
\end{equation}
and make the identification $(x, r(x))\sim (Tx, 0)$.
The flow $\tau_t$ is defined by $\tau_t(x,s)= (x, s+t)$;  a point $(x,0)$ in the {\em base} 
$B= \{(x, 0): x\in B\}$  moves upwards at unit speed, flowing until it reaches the top at  $(x, r(x))$, when it jumps back via the identification to $(Tx, 0)$ in the base and continues.

We write $ \mu$ for the measure induced from the product of $\mu_B$ on the base with Lebesgue measure on $\r$.
This measure is preserved by the flow; one calls  $(X,  \mu, \tau_t)$  the {\em special flow} built {\em over} the {\em base map} $(B,\mu_B,T)$ and 
  {\em under} the {\em return--time function} $r$.
The special flow is conservative ergodic if and only if the base map
is. 
If either the base or the flow space  has finite measure,
conservativity holds (by the Poincar\'e recurrence theorem).
In the special case that $\mu_B$ is a probability measure, the measure
of the flow space is
equal to  the expected return time:
\begin{equation}\label{expreturn}
\mu(X)= \e(r)= \int_B r(x)\d \mu_B (x).
\end{equation}

A basic result is the converse to the construction of a special flow, known as the 
Ambrose--Kakutani Theorem. Given a measure--preserving flow $(X, \A,
\mu, \tau_t)$,  a {\em cross--section} 
of the flow
is a subset $B$ of $X$ such that the orbit of a.e.~point  meets $B$
for a nonempty discrete set of times
(here {\em discrete} means a subset of  $\R$ with no accumulation points). The $\sigma$-algebra of measurable 
sets $\Cal A_B$ of the cross--section    consists by definition of the
subsets $A$  such that the {\em  rectangle} $A_{[a,b]}\equiv \{\tau_t(x):\, x\in A, t\in [a,b]\}$ is $\A$--measurable for  
 $a<b\in \R$; $B$ is said to be  a {\em measurable} cross--section if the
 collection of such rectangles
generates $\A$. The 
{\em cross--section measure} 
$\mu_B$ is defined by $\mu_B(A)= \lim_{r\to 0} \frac{1}{r}\mu(A_{[0,r]})$.

\begin{theo}\label{t:Ambrose}
{\em(Ambrose-Kakutani, \cite{AmbroseKakutani42})} Let $(X,\Cal A, \mu,
\tau_t)$ be a conservative  
ergodic flow. Then there exists a measurable cross--section $B\subseteq X$ with 
finite measure $\mu_B$. The
 flow is isomorphic to the special flow over $B$ with 
return--time function $r(x)= \min \{t>0: \tau_t(x)\in B\}$ and with  
first return map  $(B,\A_B,\mu_B, T)$ where $T(x)= \tau_{r(x)} (x)$. 
\ \ \qed\end{theo}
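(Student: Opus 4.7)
The plan is to follow the classical Ambrose--Kakutani strategy, which builds a cross--section through a "flow box" construction followed by a lacunarity refinement, and then reads off the special flow representation.

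First I would produce a \emph{flow box}: a measurable set $A \subseteq X$ of finite positive measure together with a constant $\delta > 0$ such that the map $\Psi : A \times (0,\delta) \to X$, $\Psi(x,t) = \tau_t x$, is injective with measurable image. Starting from any measurable set of finite positive measure (available because conservative ergodic flows on non-atomic spaces are sigma--finite), I would use joint measurability of the flow and a Fubini argument to discard a null set of orbits along which the return times to the initial set accumulate at $0$, obtaining on the remainder a uniform lower gap $\delta$ between successive visits. This gives the injectivity of $\Psi$ on $A\times(0,\delta)$ and the product structure needed for measurability downstream.

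Next I would refine the flow box to a genuine cross--section $B \subseteq A$. Conservativity and ergodicity guarantee that almost every orbit visits $A$ for arbitrarily large positive and negative times; within each orbit the visit times form a discrete set by the flow--box property. A measurable selection then picks out, orbit by orbit, a subset $B \subseteq A$ whose intersection with each orbit is still discrete and biinfinite (for instance by iteratively taking the "next visit separated by at least $\delta$"). The cross--section sigma--algebra $\A_B$ and measure $\mu_B$ are inherited from $\mu$ via the rectangle formula $\mu_B(A') = \lim_{r\to 0} r^{-1}\mu(A'_{[0,r]})$; Fubini applied through the flow box gives the existence of the limit and its finiteness, since $B \subseteq A$ and $\mu(A) < \infty$.

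Finally, with $B$ in hand, the return time $r(x) = \min\{t > 0 : \tau_t x \in B\}$ is measurable (as the infimum over a measurable family) and a.s.~positive and finite, by discreteness of visits and conservativity respectively. The first--return map $T = \tau_{r(\cdot)}$ preserves $\mu_B$, and the tautological map $(x,t) \mapsto \tau_t x$ from $\{(x,t) : 0 \leq t < r(x)\}$ to $X$ is a measurable, measure--preserving bijection modulo the identification $(x,r(x)) \sim (Tx,0)$, intertwining the vertical special flow with $\tau_t$. The main obstacle is the measurable selection step: producing a Borel choice of "discrete visit times" without an a priori topological structure on $X$ requires working in a standard Borel realization and carefully controlling null sets so that $B$ has strictly positive finite $\mu_B$--measure and $r$ is pointwise positive and finite off a null set.
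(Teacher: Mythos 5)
The paper does not prove this theorem; it is cited to Ambrose and Kakutani and stated with a \qed, so the only question is whether your sketch would deliver a valid proof.

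Your skeleton --- flow box, then cross--section, then reading off the special-flow representation --- is the right one, and the later steps (finiteness of $\mu_B$ via Fubini through the flow box, measurability and a.s.~positivity of $r$, $\mu_B$-invariance of the first--return map, and the tautological isomorphism with the special flow) are all routine and sound. The genuine gap is in your construction of the flow box, and this is precisely where all the substance of Ambrose--Kakutani lies. You propose to start from an arbitrary measurable set $A_0$ of finite positive measure and obtain a lacunary set merely by discarding a null set of orbits on which the return times to $A_0$ accumulate at $0$. For a ``thick'' measurable set this simply fails: a positive-measure (typically full-measure) family of orbits has a return-time set $\{t : \tau_t x \in A_0\}$ containing entire intervals, so there is no null set to discard. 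For instance, in the suspension of an ergodic transformation with constant roof function $1$, take $A_0$ to be the part of the tower of height in $[0,\tfrac12]$; every orbit's return-time set to $A_0$ is a union of length-$\tfrac12$ intervals. The real work is to \emph{construct} from scratch a positive-measure set whose orbit intersections are $\delta$-separated, a nontrivial measure-theoretic argument that cannot be reduced to removing a null set from an arbitrary starting set. Note also that once you have such a lacunary $A$ (so that $\Psi : A \times (0,\delta) \to X$ is injective), the set $A$ is already a cross-section for a conservative ergodic flow, so your second ``refinement'' step is superfluous. In short: you correctly flag measurable selection as delicate, but you locate the difficulty in the wrong place --- producing the flow box at all is the central obstruction, not choosing within one already given.
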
 

\subsection{Cocycles}\label{ss:cocycles}
For several different purposes below we shall need the abstract ergodic theory idea of a 
cocycle over a transformation or flow; we recall the definitions.
\begin{defi} A real--valued {\em cocycle} over a transformation $(X,\Cal A,\mu,T)$ is a measurable function
$\Psi: X\times \Z\to \r $ satisfying for a.e.~$x$
\[
\Psi(x,n+m)= \Psi(x,n)+ \Psi(T^n x,m)\] for all $n,m\in\Z.$ 
A cocycle $\Psi$ over a  flow $(X,\Cal A,\mu, \tau_t)$ is a measurable
function $\Psi: X\times \R \to \R$ which satisfies:
\begin{equation}\label{e:flowcocycle}
\Psi(x,t+s)= \Psi(x,s)+ \Psi(\tau_sx,t)
\end{equation}
 for all $t,s\in\r$. (So in particular, $\Psi(x,0)=0$, for a.e.~$x\in X$).
\end{defi}

A measurable function $\psi:X\to \r$ {\em generates} a cocycle $\Psi$ by summing along the orbits,
defining $\Psi(x,0)= 0$ and 
\begin{equation}\label{e:cocyclesum}
\Psi(x,n)= 
\left\{ \begin{array}{ll} 
&\sum_{i=0}^{n-1}\psi(T^ix) \,\, \text{ for } n>0  \\
-&\sum_{i=n}^{-1}\psi(T^ix) \,\, \text{ for } n<0. 
\end{array} \right.
\end{equation}

For flows the sign is automatically handled by the calculus notation: we define
\begin{equation}\label{e:flowcocyclesum}
\Psi(x,t)= \int_0^t \psi(\tau_s(x))\d s.
\end{equation}
Given a measurable cocycle over a transformation, there is a (unique)
function which generates it: $\psi(x)\equiv \Psi(x,1)$. That this can fail for flows is illustrated by:
\begin{exam}\label{e:Cross--sectionTimes}
Consider a  flow $(X,  \mu,\tau_t)$ with cross--section $B\subseteq X$, and count the number of returns of a point $x$ to the cross--section, by
\begin{equation}\label{TGW}
N_B(x,t)= 
\left\{ \begin{array}{ll} 
&\#\{s\in (0,t]: \tau_s(x)\in B\} \,\,\,\;\;\; \,\text{ for } t\geq 0  \\
-&\#\{s\in (-t,0]: \tau_s(x)\in B\} \,\,\,\, \text{ for } t< 0 
\end{array} \right.
\end{equation}
(so  $\forall x$, $N_B(x,0)= 0$.) Since this 
cocycle is discontinuous in $t$, it cannot
be generated by a function.
\end{exam}

\subsection{Integral of a cocycle}\label{ss:expectedvalue}
 
\begin{defi}\label{d: expectedvalue}
 
The {\em integral} of a cocycle $\Psi $  over a measure--preserving
flow  $ (X,\mu,\tau_t)$ is the extended-real number
\begin{equation}\label{canttakeit}
\I(\Psi)\equiv \frac{1}{t} \int_X \Psi(x,t) \dd\mu(x)\end{equation}
for $t\in \R\setminus\{0\}$ (when that is defined, i.e.~when it is
finite, $+\infty$ or $-\infty$); if this is finite we say the cocycle is {\em integrable}. We say a cocycle $\Psi $  over a measure--preserving flow  $ (X,\mu,\tau_t)$ is {\em nonnegative} a.s.~if for a.e.~$x$, $\Psi(x,t)\geq 0$ for all $t\geq 0$. 
\end{defi}

\begin{prop}\label{p:expectedvalue} Let $ (X,\mu,\tau_t)$ be a
  measure--preserving flow. 
\item{(i)} The integral of a cocycle  \eqref{canttakeit} is indeed independent of $t$. 
\item{(ii)}
If the cocycle $\Psi$ is generated by a function $\psi$, then 
$\I(\Psi)= \int_X\psi(x)\dd \mu(x)$.
\item{(iii)}
 Let $B$ be a flow cross--section with return
map $(B, \mu_B, T)$ and return--time function $r$.
Let $\Psi$ be a measurable cocycle over the flow, which is 
either nonnegative or  integrable.
Then 
\begin{equation}\label{esperance}
\I(\Psi)= \int_B\Psi(x, r(x))\dd\mu_B(x).
\end{equation}
\item{(iv)} If a  flow  $ (X,\mu,\tau_t)$ is conservative ergodic, and
  $\Psi$ is a cocycle with $\I(\Psi)>0,$ then for a.e.~$x$, $\Psi(x,t)\to\pm\infty$
  as $t\to\pm\infty$.
\end{prop}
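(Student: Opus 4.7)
For (i), setting $F(t)=\int_X\Psi(x,t)\dd\mu(x)$, the cocycle identity $\Psi(x,s+t)=\Psi(x,s)+\Psi(\tau_s x,t)$ together with $\tau_s$--invariance of $\mu$ yields $F(s+t)=F(s)+F(t)$; the measurable solutions of Cauchy's functional equation are linear, so $F(t)=ct$ and the ratio $F(t)/t=c$ is $t$--independent. For (ii) substitute $\Psi(x,t)=\int_0^t\psi(\tau_s x)\dd s$ into $F(t)$; Fubini combined with flow invariance gives $F(t)=t\int_X\psi\dd\mu$, so $\I(\Psi)=\int_X\psi\dd\mu$.

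For (iii) I would exploit the Ambrose--Kakutani representation (Theorem \ref{t:Ambrose}) $X=\{(b,s):b\in B,\ 0\le s<r(b)\}$ with $\dd\mu=\dd\mu_B\,\dd s$. Set $g_b(u):=\Psi((b,0),u)$; the identification $(b,r(b))\sim(Tb,0)$ together with the cocycle identity forces the recursion $g_b(r(b)+v)=\tilde\Psi(b)+g_{Tb}(v)$, where $\tilde\Psi(b):=\Psi(b,r(b))$, and gives $\Psi((b,s),t)=g_b(s+t)-g_b(s)$. After the substitution $u=s+t$ and a rearrangement of intervals one finds
\[
\int_X\Psi(y,t)\dd\mu=\int_B\Bigl(\int_{r(b)}^{r(b)+t}g_b(u)\dd u-\int_0^t g_b(u)\dd u\Bigr)\dd\mu_B(b).
\]
Inserting the recursion into the first inner integral turns it into $t\,\tilde\Psi(b)+\int_0^t g_{Tb}(v)\dd v$; the $T$--invariance of $\mu_B$ cancels the two $g$--terms after a Fubini swap, leaving precisely $t\int_B\tilde\Psi\dd\mu_B$, which is \eqref{esperance}. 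Nonnegativity (respectively, integrability) of $\Psi$ is exactly what legitimizes the Fubini exchanges and the cancellation of potentially infinite integrals.

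For (iv), pick by Ambrose--Kakutani a cross--section $B$ with $\mu_B(B)<\infty$; ergodicity of the flow passes to the return map $T$ on $(B,\mu_B)$, and $\int_B\tilde\Psi\dd\mu_B=\I(\Psi)>0$ by (iii). Birkhoff's theorem on $(B,\mu_B,T)$ then gives, for $\mu_B$--a.e.~$b$,
\[
\tfrac{1}{n}\Psi(b,r_n(b))=\tfrac{1}{n}\sum_{i=0}^{n-1}\tilde\Psi(T^i b)\longrightarrow\I(\Psi)/\mu_B(B)>0,
\]
with $r_n(b):=\sum_{i=0}^{n-1}r(T^i b)$ the $n$th return time, so $\Psi(b,r_n(b))\to+\infty$ along return times. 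To upgrade to arbitrary $t\to+\infty$, I would bracket $t$ between consecutive $r_n(b)$ and $r_{n+1}(b)$ and control the excursion contribution $|\Psi(T^n b,\,t-r_n(b))|$ by the excursion modulus $M(b):=\sup_{0\le u\le r(b)}|\Psi(b,u)|$, applying Birkhoff to $M$ to get $M(T^n b)=o(n)$. The case $t\to-\infty$ is symmetric via $\Psi(x,-t)=-\Psi(\tau_{-t}x,t)$. The main obstacle is precisely this last step: for nonnegative $\Psi$ monotonicity delivers it for free, but for signed integrable cocycles one needs $M\in L^1(\mu_B)$, which in the applications of this paper is guaranteed by the local bounded variation hypothesis that appears in Theorem \ref{t:logaveragetheorem}.
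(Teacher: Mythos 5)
Your arguments for (i)--(iii) track the paper's closely: Cauchy's functional equation for (i), Fubini plus flow-invariance for (ii), and for (iii) the Fubini decomposition over the Ambrose--Kakutani cross-section combined with the cocycle identity across the return time and $T$-invariance; the paper works directly with $\Psi(\tau_t x, a)$ rather than your auxiliary $g_b$, but the interval rearrangement and the cancellation by $T$-invariance are the same computation.

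Two small points. In (i), Definition \ref{d: expectedvalue} allows $\I(\Psi)\in\{-\infty,+\infty\}$, and the paper's proof gives a self-contained Lebesgue-density argument covering both the finite and the extended-real cases; your appeal to the standard measurable-Cauchy theorem handles the finite case but leaves the $\pm\infty$ case unaddressed. More substantively, you are right to flag (iv): the paper's own proof of (iv) invokes ``the fact that $\Psi$ is nondecreasing,'' which is not among the stated hypotheses of (iv), and as you observe, for a signed integrable cocycle one must control the excursion modulus $M(b)=\sup_{0\le u\le r(b)}|\Psi(b,u)|$. Your proposed fix --- Birkhoff applied on the cross-section to $\Psi(\cdot,r(\cdot))$ so that $\Psi(b,r_n(b))$ grows linearly, plus $M\in L^1(\mu_B)$ to make $M(T^n b)=o(n)$ --- is the right one; note that $M\le\Psi^+(\cdot,r(\cdot))+\Psi^-(\cdot,r(\cdot))$ pointwise, so $M\in L^1(\mu_B)$ follows from (iii) once $\I(\Psi^+)$ and $\I(\Psi^-)$ are finite, which the paper's applications arrange via the local-bounded-variation hypothesis and the Hahn decomposition in Lemma \ref{l:Hahn decomp}.
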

\begin{proof}[Proof of Proposition of \ref{p:expectedvalue}] We start with $(i)$.
Writing $\lambda(t)=   \int_X \Psi(x,t) \d \mu(x)$, since $\Psi$ is
measurable, for a.e.~$x$, $\lambda$ is a measurable function
on $\r$. We suppose first  that for all $t$, $\lambda(t)$ is finite. 
Now by the cocycle property
together with the fact that $\mu$ is preserved by the flow,
 \[
\begin{aligned}
\lambda(s+t)= \int_X \Psi(x,s+t) \d\mu(x)
= \int_X \Psi(x,t) +
 \Psi(\tau_t(x), s)\d\mu(x)=\lambda(t)+\lambda(s),\\
\end{aligned}
\]
so $\lambda(\cdot)$ is a measurable homomorphism on the additive group of the reals.  

Thus for any $q\in\Q$, $\lambda(qt)= q\lambda(t)$. 
We show that $\lambda $ is in fact $\R$--linear. This type of result goes back at least to Banach
(Theorem 4, Chapter 1 of \cite{Banach32}) in a more general context using Baire measurability; it can be viewed as a rigidity theorem. 
We give a simple direct proof for this case.

Defining $s(t)= \lambda(t)/t$, then $s(qt)= s(t)$ for all 
$t\in\r, q\in\Q$. Setting for $a<b$ 
$$
A_{(a,b)}= \{t: s(t)\in (a,b)\},
$$
then there exist $a,b$ such that this has Lebesgue measure $m(A_{(a,b)})>0$.
By the Lebesgue density theorem, $m$--a.e.~point $t$ of this set is a point of density. Now
for such a $t$, the same is true for $qt$ where $q\neq 0$, since 
Lebesgue measure under dilation by $q$ is multiplied by $q$. We claim that for any 
$A_{(c,d)}$ with $b<c$, then $m(A_{(c,d)})= 0$: if not, taking
 $t_0$ (resp. $t_1$) to be a point of density for $A_{(a,b)}$ (resp. $A_{(c,d)}$), then for any $\varep>0$ there is $q$ such that $qt_0$ is $\ep$-close to $t_1$, giving a contradiction. Taking smaller subintervals, we have that $s(t)$ is $m$--a.s.~constant.  Thus $\lambda$ is a.s.~linear over $\r$.

Let $G\subseteq\r$ denote this set of points $t$ where $s(t)=a_0$ and 
$m(\r\setminus G)= 0$. But in fact $G= \r$; suppose that there
is some $w$ with $s(w)= b_0\neq a_0$. Then for all $x= g+w$ with $g\in G$,
$\lambda(x)= \lambda(g)+\lambda(w)= a_0 g+ b_0 w$ which equals $a_0(g+w)$ 
iff $b_0=a_0$. Hence for all $x\in G+w$, $s(x)\neq a_0$, but this translate of $G$ is also a set of full Lebesgue
measure, giving a contradiction.

Now consider the case where for some $t\neq 0$, $\lambda(t)$ is not finite. 
Define $\wt \lambda(t)= \lambda(t)$ if this is finite, $=0$ otherwise.
Then $\wt \lambda$ is a measurable, finite--valued function and is a homomorphism, so by the 
previous argument is linear, hence is identically zero. Thus $\lambda(t)= +\infty$ or $-\infty$ for each $t\neq 0$. Considering $t>0,$ we define $\wh \lambda(t) = t$  if  $\lambda(t)= +\infty$,  $\wh \lambda(t) = -t$  if  $\lambda(t)= -\infty$. This is still a measurable homomorphism, so as before is linear, so only one choice is possible.

We have shown that if $\forall t\neq 0$ $\lambda(t) $ exists as an extended real number, then 
$\I(\Psi) $ is well--defined.

\smallskip

\noindent
Proof of $(ii)$: Using $(i)$, then Fubini's theorem and the fact that $\mu$ is preserved by $\tau_t$ we have 
\[\I(\Psi)=\frac{1}{t} \int_X\Psi(x,t)\d \mu(x)
= \frac{1}{t} \int_X\int_0^t \psi(\tau_s(x))\d s\, \d \mu(x)=
\frac{1}{t} \int_0^t \int_X\psi(\tau_s(x))\d \mu(x)\, \d s=
 \int_X\psi\d\mu.
\]

\noindent
Proof of $(iii)$: By $(i)$, for 
all $a>0$, 
$$
\I(\Psi)\equiv \frac{1}{a} \int_X \Psi(x,a) \, \d \mu(x).$$
We first claim that if  $\Psi$ is either nonnegative   or   integrable, then
\begin{equation}\label{owen}
\frac{1}{a} \int_X \Psi(x,a) \d \mu(x)=
\frac{1}{a} \int_B\biggl(\int_0^{r(x)} \Psi(\tau_t x, a) \d t\biggr) \d \mu_B (x).
\end{equation}
This is just a Fubini argument,  since
locally $\d \mu= \d \mu_B\times \d t$; precisely,  
consider the measure space $B\times[0,+\infty)$ with 
product measure $\mu_B\times m$ where $m$ is Lebesgue measure, and let
$X$ be as in \eqref{eq:special flow};
we define a function $\wh \Psi$ by $\wh \Psi= \Psi$ on 
$X\subseteq B\times[0,+\infty)$,
zero elsewhere; then we apply Tonelli's or Fubini's theorem (\cite{Royden68} pp.~269-270)
depending on whether  $\Psi$ is nonnegative or integrable, to $\wh\Psi$.

Now by the cocycle property, $\Psi(\tau_t x, a)= \Psi(x,t+a)-\Psi(x,t)$ so
\[
\int_0^{r(x)} \Psi(\tau_t x, a) \d t=
\int_{r(x)}^{r(x)+a}\Psi(x, t) \d t- 
\int_0^{a} \Psi(x, t) \d t.
\]

Since for $x\in B$, $Tx= \tau_{r(x)}(x)$, we have again by the cocycle property,
for any $s\geq 0$,
\[
\Psi(x, r(x)+s) = \Psi(x, r(x))+ \Psi(Tx, s)
.\]
Therefore setting $s=t- r(x)$, 
\[
\int_{r(x)}^{r(x)+a}\Psi(x, t) \d t=
\int_0^a\Psi(x,r(x)+s)  \d s=
\int_0^a\Psi(x, r(x))+ \Psi(Tx, s) \d s=
a\Psi(x, r(x))+ \int_0^a \Psi(Tx, s)\d s.\]
So
\[
\frac{1}{a}\int_0^{r(x)} \Psi(\tau_t x, a) \d t=
\Psi(x, r(x))+ \frac{1}{a}\int_0^a \Psi(Tx, s)\d s-\frac{1}{a}\int_0^{a} \Psi(x, t) \d t
.\]
Inserting the righthand side into \eqref{owen}, since $T$ preserves the measure $\mu_B$, $\int_B\Psi(Tx, s)\d \mu_B(x) 
=\int_B \Psi(x, s)\d \mu_B(x)$ 
and we have \eqref{esperance}.

To prove $(iv)$, by Theorem \ref{t:Ambrose} there exists a
cross--section so we can apply part $(iii)$. If $\I(\Psi)>0$ then using $(iii)$ there exists $c>0$ and
$A\subseteq B$ of positive measure, such that $\Psi(x,
r(x))>c$ for all $x\in A$. Since the flow is recurrent, a.e.~$x$
enters $A$ infinitely often. By the cocycle property plus the fact
that $\Psi$ is nondecreasing, therefore, $\Psi(x,t)\to+\infty$ as $t\to+\infty$.
\end{proof}

We remark that for integrable, nondecreasing $\Phi$, part $(iii)$ can
be proved by an 
approximation argument, taking in \eqref{canttakeit}
a limit as $a\to 0$; the exact ``wraparound'' proof just presented handles  this more general case, where  oscillations and  $\I(\Psi)= +\infty$ are 
allowed.

\subsection{  Ergodic theorems for cocycles over flows.}
Here we prove cocycle versions of the Birkhoff and Hopf ergodic
theorems; these results will be applied in the proof of Theorem \ref{t:logaveragetheorem}.
For transformations, the cocycle theorems are a direct consequence of
 the usual  ergodic theorems as  every cocycle is generated by a
 function. For flows, however, we shall need a different argument.

Let $(X,\mu,\tau_t)$ be a conservative, ergodic, measure--preserving flow.
The standard formulation of the a.s.~ergodic theorems state:
for $\psi:X\to \r$ in $L^1(X, \mu)$,
for $\mu-$a.e.~$x\in X$, 

\noindent
${(1)}$ (Birkhoff Ergodic Theorem)
if $\mu(X)=1$:
\begin{equation}\label{birkhoffth}
\lim_{T\to\infty} \frac{1}{T} \int_0^T \psi(\tau_t x) \ \d t 
= \e(\psi)\equiv \int_X \psi \d\mu.\end{equation} 
\noindent
${(2)}$ (Hopf Ratio Ergodic Theorem) For  $\mu(X)$ finite or infinite, $\psi,\varphi\in L^1(X,\mu)$ with $\int _X\psi\d\mu\neq 0$, then:
\begin{equation}\label{hopfth}
\lim_{T\to\infty} \frac{ \int_0^T \varphi(\tau_t x) \ \d t }
{ \int_0^T \psi(\tau_t x) \ \d t }
=  \frac{\int_X\varphi\d\mu} {\int _X\psi\d \mu}.\end{equation}

\begin{rem}\label{r:HopfLog} For those not so familiar with the Hopf
  theorem we mention that these two 
  theorems are essentially equivalent: for $X$ a probability space,
  taking $\psi\equiv 1$ in \eqref{hopfth} gives
  \eqref{birkhoffth}. Conversely, \eqref{hopfth} can be proved for
  bounded functions from \eqref{birkhoffth} by inducing on 
successively larger sets of finite measure; see \cite{Fisher92}, and
see  \cite{Zweimueller04} for the general $L^1$ case. 

\end{rem}
Let $D=D_{\r}$ denote two-sided Skorokhod path space, the c\`adl\`ag functions
$f:\R\to \R$, let $D_0\subset D$
denote the elements $f$ with $f(0)= 0$, and let
$D_{0+}$ denote  the subset of nondecreasing elements of $D_0$. Then there is a 
bijective correspondence from $D_{0+}$ to the Borel measures
on $\R$, defined by
 \begin{equation}\label{measure to function}
\begin{array}{ll} 
&\rho\bigl((0,t]\bigr)= f(t)\text{ \;\;\; for } t\geq 0  \\
& \rho\bigl((t,0]\bigr)=- f(t)\,\,\text{ for } t< 0,
\end{array}.
\end{equation}
and extending additively from these intervals to the Borel sets;
\eqref{measure to function}  defines, furthermore,  a bijection
between the elements of $D_0$ which are of local bounded variation and the charges
(signed measures) on $\R$.
We recall that 
a function of local bounded  variation can be written
 as $f= g-h$
where $g,h$ are  nondecreasing functions
(Theorem 4,
\S 5.2 of \cite{Royden68}); there is a way to
make this decomposition  canonical:

\begin{defi}Given a function $f\in D_0$ of local
bounded  variation, define a charge $\rho=\rho_f$
by 
\eqref{measure to function}.
Letting $\rho= \rho^+-\rho^-$ be its Hahn decomposition  (p. 236 of \cite{Royden68}), so  $\rho^+,\rho^- $ are
mutually singular measures, we define 
 $f^+, f^-\in D_{0+}$ by reversing  equation \eqref{measure to
   function} for  $\rho^+,\rho^- $.
 Then we call $f= f^+-f^-$ the Hahn decomposition of $f$.
\end{defi}

\begin{lem}\label{l:Hahn decomp} Let  $\Psi$ be a real--valued cocycle over a measurable flow $(X,\mu,
  \tau_t).$ Assume that for a.e.~$x$, 
$\Psi(x,t) $ is 
 a function of local bounded variation  in $t$. Then the 
Hahn decomposition
$\Psi(x,t)= \Psi^+(x,t) -\Psi^-(x,t)$ defines a pair of cocycles $\Psi^+(x,t),\Psi^-(x,t)$.
\end{lem}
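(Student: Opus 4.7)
The plan is to reformulate the cocycle identity in measure-theoretic language via the bijection \eqref{measure to function}, and then exploit uniqueness of the Hahn decomposition together with its naturality under translation.

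First, I would observe that for a.e.~$x\in X$, the path $\Psi(x,\cdot)\in D_0$ is of local bounded variation, so by \eqref{measure to function} it determines a unique locally finite charge $\rho_x$ on $\R$. The cocycle identity, in the equivalent form $\Psi(\tau_s x, t) = \Psi(x, s+t) - \Psi(x, s)$, translates directly into $\rho_{\tau_s x}((a,b]) = \rho_x((a+s, b+s])$ on half-open intervals, and then via a monotone class argument into
\[
\rho_{\tau_s x}(A) = \rho_x(A+s)
\]
for every Borel $A\subseteq\R$, where $A+s=\{a+s: a\in A\}$. In other words, the measurable family $x\mapsto \rho_x$ is equivariant: translation by $-s$ on $\R$ carries $\rho_x$ to $\rho_{\tau_s x}$.

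Second, I would apply Hahn's decomposition theorem to obtain a Borel set $P_x\subseteq\R$ with $\rho_x^+(\cdot)=\rho_x(\cdot\cap P_x)$ and $\rho_x^-(\cdot)=-\rho_x(\cdot\cap P_x^c)$. The translate $P_x - s$ is then immediately a Hahn positive set for $\rho_{\tau_s x}$, and uniqueness of the decomposition up to $|\rho_{\tau_s x}|$-null sets forces
\[
\rho_{\tau_s x}^\pm(A) = \rho_x^\pm(A+s).
\]
Translating back via \eqref{measure to function} yields, for $t>0$,
\[
\Psi^\pm(\tau_s x, t) = \rho_{\tau_s x}^\pm((0,t]) = \rho_x^\pm((s, s+t]) = \Psi^\pm(x, s+t) - \Psi^\pm(x, s),
\]
and the symmetric computation on $(t, 0]$ handles $t<0$; this is the cocycle identity for $\Psi^+$ and $\Psi^-$.

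Finally, joint measurability of $\Psi^\pm$ on $X\times\R$ needs to be verified. Since $\Psi(x,\cdot)$ is c\`adl\`ag, the positive variation over $[s,t]$ equals the supremum of $\sum_i (\Psi(x, t_{i+1})-\Psi(x,t_i))^+$ taken only over partitions with rational vertices, a countable supremum of jointly measurable functions; hence $\Psi^+$, and then $\Psi^-=\Psi^+-\Psi$, is measurable. The only (rather mild) obstacle is the first, equivariance step linking the cocycle identity to translation of charges; once that is in place the naturality of the Hahn decomposition does the rest.
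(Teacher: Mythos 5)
Your argument is correct and is essentially the paper's proof, spelled out: both pass via \eqref{measure to function} to the signed measures $\rho_x$, record the translation equivariance $\rho_{\tau_s x}(A)=\rho_x(A+s)$ forced by the cocycle identity, and transfer it to $\rho_x^{\pm}$ by uniqueness of the Jordan/Hahn decomposition before translating back. The one thing you add that the paper's one-line proof leaves implicit is the verification that $\Psi^{\pm}$ are jointly measurable on $X\times\R$ (which the paper's definition of a flow cocycle requires), and your rational-vertex computation of the positive variation handles that correctly given the assumed joint measurability and c\`adl\`ag regularity of $\Psi$.
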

\begin{proof}
  The cocycle property implies
$\rho_{\tau_s x}(A)=
  \rho_x(A+s)$; 
$\rho_x^+,\rho_x^-$ satisfy this same equation, and  reversing the logic,
$\Psi^+(x,t)$ and $\Psi^-(x,t)$ defined from these measures 
are cocycles, with $\Psi= \Psi^+-\Psi^-$. 
\end{proof}
Here is our extension of the ergodic theorems to cocycles:
\begin{theo}\label{t:CocycleBirkHopf}
Let  $ \tau_t$ be a conservative, ergodic, measure--preserving flow
on a $\sigma$--finite measure space
$(X,  \mu)$. Let $\Phi,\Psi$ be real--valued cocycles, measurable,  of local bounded variation in $t$.
Assume $\I(\Phi)$, $\I(\Psi)<\infty$ and  $\I(\Phi)\neq 0.$ 
Then, 
for $\mu-$a.e.~$x\in X$,
\smallskip
\item{(i)} (Cocycle Birkhoff  Ergodic Theorem)
if $\mu(X)=1$:
\begin{equation}\label{birkof}
\lim_{T\to\infty} \frac{1}{T} \Psi(x,T)= \I(\Psi).\end{equation}
\item{(ii)} (Cocycle Hopf Ergodic Theorem) For $0<\mu(X)\leq +\infty$:
\begin{equation}\label{hopf}\lim_{T\to\infty} \frac{ \Psi(x,T)}{ \Phi(x,T)}= \frac{\I(\Psi)}{\I(\Phi)}.\end{equation}
\end{theo}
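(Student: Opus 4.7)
My plan is first to use the Hahn decomposition of Lemma \ref{l:Hahn decomp} to reduce both \eqref{birkof} and \eqref{hopf} to the case of nonnegative, nondecreasing cocycles; then to invoke the Ambrose--Kakutani Theorem \ref{t:Ambrose} to cut the flow by a cross-section $B$ of finite measure and apply the classical discrete ergodic theorems on the base $(B,\mu_B,T)$; and finally to interpolate between successive return times in order to pass from discrete to continuous time.

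Concretely, write $\Psi=\Psi^+-\Psi^-$ and $\Phi=\Phi^+-\Phi^-$ via Lemma \ref{l:Hahn decomp}, so each of the four cocycles is nondecreasing and nonnegative in $t$; reading the hypothesis ``$\I(\cdot)$ finite, of local bounded variation'' as saying that the Hahn parts are themselves $\I$-integrable, it suffices by linearity of $\I$ and of limits to establish the theorem with $\Psi,\Phi$ nondecreasing. In that case, pick a cross-section $B$ of finite measure $\mu_B$ with first-return map $T$ and return-time function $r$, set $\psi(x)\equiv\Psi(x,r(x))$ and $\varphi(x)\equiv\Phi(x,r(x))$, and note via Proposition \ref{p:expectedvalue}$(iii)$ that $\int_B\psi\,\dd\mu_B=\I(\Psi)$ and $\int_B\varphi\,\dd\mu_B=\I(\Phi)$, so both functions lie in $L^1(B,\mu_B)$. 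Writing $t_n(x)=\sum_{k=0}^{n-1}r(T^kx)$ for the iterated return times, the cocycle identity telescopes to give $\Psi(x,t_n(x))=\sum_{k=0}^{n-1}\psi(T^kx)$ and similarly for $\Phi$, so the classical Birkhoff theorem (for (i), since $\mu_B$ is finite) or Hopf ratio theorem (for (ii)) applied to the conservative ergodic base map delivers \eqref{birkof} and \eqref{hopf} along the subsequence $\{t_n(x)\}_{n\geq 0}$.

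To interpolate, given $T>0$ pick $n$ with $t_n(x)\leq T<t_{n+1}(x)$. Monotonicity of $\Psi$ in $t$ sandwiches
\[
\Psi(x,t_n(x))\;\leq\;\Psi(x,T)\;\leq\;\Psi(x,t_n(x))+\psi(T^nx),
\]
and similarly for $\Phi$. Since $\psi\in L^1(B,\mu_B)$, the elementary consequence $\psi(T^nx)/n\to 0$ of Birkhoff holds a.s., and Proposition \ref{p:expectedvalue}$(iv)$ together with $\I(\Phi)\neq 0$ (replacing $\Phi$ by $-\Phi$ if needed) ensures $\Phi(x,t_n(x))\to\infty$; this yields \eqref{hopf}. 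Statement \eqref{birkof} follows by the same sandwich, using additionally $t_n(x)/T\to 1$ (the gap $T-t_n(x)$ is bounded by $r(T^nx)=o(n)$ while $t_n(x)\sim n\e(r)$). Finally the a.s.~conclusion on $B$ extends to $\mu$-a.e.~$x\in X$ by the cocycle identity and flow invariance: a.e.~orbit meets $B$, and a finite time shift does not affect the limits.

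The main obstacle is the interpolation: for a cocycle that is merely of local bounded variation, $\Psi(x,T)-\Psi(x,t_n(x))$ need not be controlled by the single discrete increment $\psi(T^nx)$, and only after passing via Hahn to monotone summands does the sandwich above become valid. A secondary technical point is ensuring that $\I(\Psi^\pm)$ and $\I(\Phi^\pm)$ are separately finite; this has to be built into the integrability interpretation of the local-bounded-variation hypothesis rather than derived from $\I(\Psi)<\infty$ alone.
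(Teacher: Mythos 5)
Your proof is correct in essentials, but it takes a genuinely different route from the paper's. The paper avoids cross-sections entirely: it introduces the difference quotients $\Psi_a(x,t)=\frac{1}{a}(\Psi(x,t+a)-\Psi(x,t))$, observes by Proposition \ref{p:expectedvalue}(i) that $\Psi_a(\cdot,0)$ is an $L^1$ \emph{function} with $\int_X \Psi_a(x,0)\,\dd\mu=\I(\Psi)$, applies the ordinary \emph{continuous-time} Birkhoff and Hopf theorems to $\Psi_{\pm a}(\cdot,0)$ and $\Phi_{\pm a}(\cdot,0)$, and recovers $\frac{1}{T}\Psi(x,T)$ by the sandwich \eqref{e:Birkestimate}. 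You instead invoke Ambrose--Kakutani, reduce to the \emph{discrete} Birkhoff/Hopf theorems on the finite-measure base $(B,\mu_B,T)$ via $\Psi(x,t_n(x))=\sum_{k<n}\Psi(T^kx,r(T^kx))$, and interpolate between return times. Both arguments rely on the Hahn decomposition of Lemma \ref{l:Hahn decomp} to reduce to monotone summands (your interpolation $\Psi(x,t_n)\le\Psi(x,T)\le\Psi(x,t_n)+\psi(T^n x)$ needs monotonicity exactly as the paper's sandwich does), and both therefore implicitly assume that $\I(\Psi^\pm),\I(\Phi^\pm)$ are separately finite, a point you are commendably explicit about. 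What the cross-section route buys is that it reduces to standard discrete-time theorems and makes Proposition \ref{p:expectedvalue}(iii) the central lemma; what the paper's difference-quotient route buys is that it never needs to construct a cross-section, handles the continuous-time interpolation in one display, and organizes the four-pair bookkeeping $(\Psi^\pm,\Phi^\pm)$ slightly more cleanly (including the degenerate case where some $\I(\Psi^\pm)$ or $\I(\Phi^\pm)$ vanishes, which you should still spell out in your part (ii) recombination). One small gap in your write-up: for the ratio statement you assert $\Phi(x,t_n(x))\to\infty$, but what you actually need is $\psi(T^nx)/\Phi(x,t_n(x))\to 0$; this follows since $\Phi(x,t_n(x))/n\to\int_B\varphi\,\dd\mu_B/\mu_B(B)>0$ by discrete Birkhoff, paired with $\psi(T^nx)/n\to 0$, so it is easily repaired.
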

\begin{proof}
 We begin with $(i)$. If $\Psi$ were differentiable along orbits, we could prove this simply by applying the
usual ergodic theorem to the derivative. However as in Example \ref{e:Cross--sectionTimes} above, that may not be the case.
 The idea of the proof will be to, instead, replace the derivative by a difference quotient, and apply the usual Birkhoff 
 Ergodic Theorem for flows  to that function.

Thus, for some fixed $a>0$ we define
$$\Psi_a(x,t)\equiv \frac{1}{a}\bigl(\Psi(x, t+a)-\Psi(x,t)\bigr),
$$
$$\Psi_{-a}(x,t)\equiv \frac{1}{a}\bigl(\Psi(x,t)-\Psi(x, t-a)\bigr)
.$$

Now $\Psi_a(x,0)= \frac{1}{a}\Psi(x,a)$ and $\Psi_{-a}(x,0)= \frac{1}{-a}\Psi(x,-a)$,
so by $(i)$ of Proposition \ref{p:expectedvalue},
\begin{equation}
  \label{eq:integral}
 \int_X\Psi_a(x,0)\d \mu(x)= \I(\Psi)= \int_X\Psi_{-a}(x,0)\d \mu(x),
\end{equation}
$+\infty$ being allowed here. We have therefore,
by the Birkhoff theorem for flows: for a.e.~$x$,
\begin{equation}\label{e:ergthm}
\lim_{T\to\infty}\frac{1}{T}
\int_{0}^{T}\Psi_a(\tau_tx,0)\ \d t=\int_X\Psi_a(x,0)\d\mu(x)= \I(\Psi)
\end{equation}
and similarly for $\Psi_{-a}$.

Now we compare this to $\frac{1}{T}\Psi(x,T)$. For all $s\in\r$, since $\Psi_a(x,t)
=\Psi_a(\tau_t(x),0)$,
we have
\begin{eqnarray}\label{e:cocycleergtheorema}
\frac{1}{a}\int_s^{s+a}\Psi(x,t)\ \d t= 
\int_0^{s}\Psi_a(\tau_t(x),0)\ \d t+
\frac{1}{a}
\int_0^{a}\Psi(x,t)\ \d t \; \; 
 \;{\text{and}}\\
\label{e:cocycleergtheorem-a}
\frac{1}{a}\int_{s-a}^s\Psi(x,t)\ \d t= 
\int_0^{s}\Psi_{-a}(\tau_t(x),0)\ \d t+
\frac{1}{a}
\int_{-a}^0\Psi(x,t)\ \d t.
\end{eqnarray}

Since by Lemma \ref{l:Hahn decomp} 
$\Psi(x,t)$ has a Hahn decomposition, we 
can assume without loss of generality that the cocycle $\Psi$ is nondecreasing. We have, taking $s= T$ successively in  \eqref{e:cocycleergtheorema} and \eqref{e:cocycleergtheorem-a}:
\[
\Psi(x,T)\leq
\frac{1}{a}
\int_{T}^{T+a}\Psi(x,t)\ \d t=
\int_{0}^{T}\Psi_a(\tau_t(x),0)\ \d t
+
\frac{1}{a}
\int_{0}^{a}\Psi(x,t)\ \d t,
\]
$$
\int_{0}^{T}\Psi_{-a}(\tau_t(x),0)\ \d t
+
\frac{1}{a}
\int_{-a}^{0}\Psi(x,t)\ \d t=
\frac{1}{a}
\int_{T-a}^{T}\Psi(x,t)\ \d t\leq \Psi(x,T)
.$$
So
\begin{equation}\label{e:Birkestimate}
\Psi(x,-a)+ \int_{0}^{T}\Psi_{-a}(\tau_t(x),0)\ \d t
\leq 
\Psi(x,T)\leq
\int_{0}^{T}\Psi_a(\tau_t(x),0)\ \d t
+
\Psi(x,a).
\end{equation}

For $x$ and $a$ fixed,  dividing by $T$, and using \eqref{e:ergthm},

$$
\I(\Psi)\leq \lim_{T\to\infty} \frac{1}{T} \Psi(x,T)\leq \I(\Psi)
.$$

\smallskip

Next we prove $(ii)$. Given Hahn decompositions 
$\Psi= \Psi^+-\Psi^-$ and $\Phi= \Phi^+-\Phi^-$, we first prove the theorem under the 
assumption that 
all 
four cocycles have integral $>0$. First we consider the pair 
$\Psi^+, \Phi^+$, assuming to simplify the notation that 
$\Psi=\Psi^+$ and $\Phi= \Phi^+$. From \eqref{e:Birkestimate} we have

\begin{equation}\label{e:Hopfestimate}
\frac{\Psi(x,-a)+ \int_{0}^{T}\Psi_{-a}(\tau_t(x),0)\ \d t}
{\Phi(x,a)+ \int_{0}^{T}\Phi_{a}(\tau_t(x),0)\ \d t}
\leq 
\frac{\Psi(x,T)}{\Phi(x,T)}\leq
\frac{\int_{0}^{T}\Psi_a(\tau_t(x),0)\ \d t+\Psi(x,a)}
{\int_{0}^{T}\Phi_{-a}(\tau_t(x),0)\ \d t+\Phi(x,-a)}.
\end{equation}
From \eqref{hopfth}, and using \eqref{eq:integral},
we have that 
\[\lim_{T\to\infty}\frac{ \int_{0}^{T}\Psi_{-a} (\tau_t(x),0)\ \d t}
{\int_{0}^{T}\Phi_{a}(\tau_t(x),0)\ \d t}= \frac{\I(\Psi)}{\I(\Phi)},
\] and similarly for the upper bound; 
noting that 
$\int_{0}^{T}\Psi_{a}(\tau_t(x),0)\ \d t =
\frac{1}{a}\int_{0}^{T}\Psi(x, a+t)\ \d t \to\infty$ as $T\to\infty$ for a.e.~$x$, by $(iv)$ of Proposition
\ref{p:expectedvalue},
and similarly for $\Psi_{-a}$,
 indeed \eqref{hopf} holds true. 

\smallskip

The remaining pairs $(\Psi^-, \Phi^+)$,
$(\Psi^-, \Phi^-)$, and $(\Psi^+, \Phi^-)$ are handled in the same way.
Next we put this information together.
Now, since all four functions are eventually nonzero, 
we have 
\[
\lim_{T\to\infty}
\frac{\Psi(x,T) }{\Phi^+(x,T)}=\lim_{T\to\infty}
\frac{\Psi^+(x,T) }{\Phi^+(x,T)}- 
\lim_{T\to\infty}
\frac{\Psi^-(x,T)}{\Phi^+(x,T)}=
\frac{\I(\Psi^+)}{\I(\Phi^+)}-\frac{\I(\Psi^-)}{\I(\Phi^+)}= \frac{\I(\Psi)}{\I(\Phi^+)},
\]
which implies \eqref{hopf}  for $(\Psi, \Phi^+)$ hence also for the reciprocal pair $(\Phi^+, \Psi);$
similarly, we have this for the pair $(\Phi^-, \Psi)$ and thus 
\eqref{hopf} holds  for $(\Phi, \Psi) $ and hence for $(\Psi,
\Phi).$  Finally if some of the integrals are zero (but with $\I(\Phi)$ eventually nonzero),
the argument is yet simpler, so we are done.
\end{proof}

We illustrate the cocycle Birkhoff  theorem  with a case where  the usual theorem does not directly apply:

\begin{prop}\label{c:returntimes}
Let $(X,\mu,\tau_t)$ be an ergodic flow on a probability space and 
$B\subseteq X$ a measurable, finite measure cross--section with measure $\mu_B$.  For the cocycle $N_B(x,t)$
of Example \ref{e:Cross--sectionTimes}, then for $\mu_B$--a.e.~$x\in B$, and also for $\mu$--a.e.~$x\in X$,
  $\lim_{T\to\infty} \frac{1}{T} N_B(x,T)= 1/\e(r)$.
\end{prop}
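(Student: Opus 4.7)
The plan is to apply the Cocycle Birkhoff Ergodic Theorem (part (i) of Theorem \ref{t:CocycleBirkHopf}) to the cocycle $N_B$ introduced in Example \ref{e:Cross--sectionTimes}. The cocycle is integer-valued and nondecreasing in $t$, hence of local bounded variation, and its measurability follows from that of the cross-section $B$; thus the hypotheses of the theorem are met provided $\I(N_B) < \infty$.

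The core computation is of $\I(N_B)$, which I would carry out using part (iii) of Proposition \ref{p:expectedvalue}. Since $N_B$ is nonnegative,
\[
\I(N_B) = \int_B N_B(x, r(x))\, \dd\mu_B(x).
\]
For any $x\in B$ the return time $r(x) = \min\{s>0:\tau_s x\in B\}$ is by definition the only time in $(0, r(x)]$ at which the orbit of $x$ meets $B$, so $N_B(x,r(x)) = 1$ and hence $\I(N_B) = \mu_B(B) < \infty$. To match this with $1/\e(r)$, I would apply Proposition \ref{p:expectedvalue}(iii) a second time, now to the cocycle $\Psi(x,t)=t$ (generated by the constant function $1$, with $\I(\Psi) = \mu(X) = 1$), which yields $\int_B r\, \dd\mu_B = 1$. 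Interpreting $\e(r)$ as expectation against the normalized probability $\mu_B/\mu_B(B)$, this gives $\e(r) = 1/\mu_B(B)$, so $\I(N_B) = 1/\e(r)$. Theorem \ref{t:CocycleBirkHopf}(i) then delivers $\lim_{T\to\infty} N_B(x,T)/T = 1/\e(r)$ for $\mu$-a.e.~$x\in X$.

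The one subtle point, and what I expect to be the main obstacle, is passing from $\mu$-a.e.~to $\mu_B$-a.e.~in $B$, since a cross-section is typically a $\mu$-null set. Let $G = \{y\in X : N_B(y,T)/T \to 1/\e(r)\}$. I would show $G$ is $\tau_t$-invariant using the cocycle identity plus the triviality $N_B(x,s)/T \to 0$ for fixed $s$: indeed
\[
\frac{N_B(x,T)}{T} = \frac{N_B(x,s)}{T} + \frac{T-s}{T}\cdot\frac{N_B(\tau_s x, T-s)}{T-s},
\]
so $x\in G$ iff $\tau_s x\in G$. Since $\mu(G^c) = 0$ and locally $\dd\mu = \dd\mu_B\times \dd t$ on a tube $B_{[0,\varepsilon)}$, Fubini produces for $\mu_B$-a.e.~$x\in B$ some $s\in[0,\varepsilon)$ with $\tau_s x\in G$; flow-invariance then forces $x\in G$, completing the proof.
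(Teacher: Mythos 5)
Your proof is correct and follows essentially the same route as the paper: apply the Cocycle Birkhoff Theorem to $N_B$, compute $\I(N_B)=\mu_B(B)$ via Proposition \ref{p:expectedvalue}(iii), identify this with $1/\e(r)$, and transfer from $\mu$-a.e.\ to $\mu_B$-a.e.\ using constancy of the limit along flow orbits. The only differences are cosmetic: you derive $\int_B r\,\dd\mu_B=1$ by a second application of Proposition \ref{p:expectedvalue}(iii) to the cocycle $t$ instead of citing the flow-space total mass directly, and you spell out the flow-invariance-plus-Fubini argument that the paper compresses into the closing parenthetical.
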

\begin{proof}
  From Theorem \ref{t:CocycleBirkHopf} we know that for $\Psi(x,t)= N_B(x,t)$, then 
$\lim_{T\to\infty} \frac{1}{T} \Psi(x,T)= \I(\Psi)$ for $\mu$--a.e.~$x$. 
Now using $(iii)$ of Proposition \ref{p:expectedvalue},
$\I(\Psi)= \int_B\Psi(x, r(x))\d\mu_B(x) = \int_B1\d\mu_B(x) =\mu_B(B)$. On the other hand,
$\e(r)=  \int_B r(x)\d\frac{\mu_B}{\mu_B(B)}(x)= 1/\mu_B(B)$, finishing the proof.
(The statement for a.e.~$x$ in the cross--section then follows  since ergodic averages are constant along flow orbits).
\end{proof}

\section{The scaling and increment flows and their duals}\label{s:dual flows}
In this section we consider certain 
topological  and Borel measurable flows on two--sided Skorokhod space
 $D= D_\R$  endowed with
 the  $J_1$--topology;  see \cite{FisherTalet11a}
for background and references.

\smallskip

We write
$D_{\geq}\subset D$ for  the nondecreasing functions which go to
$+\infty$ at $+\infty$ and $-\infty$ at $-\infty$, and $D_> $ for the subset of $D_{\geq}$  of  increasing functions.
Recall that  the {\em generalized inverse} of $f\in D$ is
$
\wh f(t)= \inf\{s:\, f(s)>t\}.
$ We write $\Cal I (f)= \wh f$, and  speak of the path $\wh f$ as
being {\em
  dual} to $f$.

Note that $\Cal I:D_{\geq}\to D_{\geq}$. 
Let $D_{0\geq }= D_0\cap D_{\geq}$
and $D_{0>}= D_0\cap D_>$.

We note that:
\begin{lem}\label{l:involution}The map $\Cal I: D\to D$ is a Borel measurable
  involution. It maps $D_{\geq} $ to $D_{\geq}$. The image  $\wh D_{0>}\equiv \Cal I(D_{0>})$ is the collection of
continuous elements of $D_{0\geq}$ such that $0$ is a point of right
increase;  jump points of $f$ become flat spots for 
$\wh f$.  
  \ \ \qed \end{lem}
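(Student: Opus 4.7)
The plan is to separate the statement into four assertions --- (i) $\Cal I: D \to D$ is Borel measurable, (ii) $\Cal I(D_\geq) \subseteq D_\geq$, (iii) $\Cal I\circ \Cal I = \mathrm{id}$ on $D_\geq$ (the genuine content of the ``involution'' claim), and (iv) $\Cal I(D_{0>}) = \wh D_{0>}$ with the stated description --- and prove each in turn. For (i) I would use that the $J_1$-Borel $\sigma$-algebra on $D$ is generated by the rational-point evaluations $f\mapsto f(r)$, $r\in\Q$; fixing $t,a\in\R$, right-continuity of c\`adl\`ag paths gives
\[
\{f : \wh f(t) < a\} \;=\; \bigcup_{r \in \Q,\ r < a}\{f : f(r) > t\},
\]
a countable union of Borel sets, so $f\mapsto \wh f(t)$ is Borel measurable. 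For (ii), if $f\in D_\geq$ is nondecreasing then the level set $\{s:f(s)>t\}$ shrinks with $t$, so $\wh f$ is nondecreasing; right-continuity follows since $s > \wh f(t)$ forces $f(s)>t$, hence $f(s)>t_n$ for any $t_n\downarrow t$ close enough; and the correct limits at $\pm\infty$ are inherited from $f$.

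For (iii) I would prove $\wh{\wh f}(u) = f(u)$ for $f\in D_\geq$ by sandwiching. For the upper bound: given $\varepsilon>0$, set $t=f(u)+\varepsilon$; by right-continuity of $f$ at $u$ there is $\delta>0$ with $f(u+\delta) < t$, so no $s\leq u+\delta$ satisfies $f(s)>t$, forcing $\wh f(t) \geq u+\delta > u$. Hence $t\in\{t':\wh f(t')>u\}$ and therefore $\wh{\wh f}(u) \leq f(u)+\varepsilon$; let $\varepsilon\downarrow 0$. For the lower bound: if $t<f(u)$ then $u\in\{s:f(s)>t\}$, so $\wh f(t)\leq u$, whence $\{t:\wh f(t)>u\}\subseteq [f(u),\infty)$ and the infimum is at least $f(u)$.

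For (iv) the same level-set analysis yields the dictionary: a jump of $f$ at $s_0$ from $a$ to $b$ produces a flat spot of $\wh f$ at value $s_0$ on $[a,b)$, while a flat spot of $f$ on $[s_1,s_2]$ at height $c$ produces a jump of $\wh f$ at $c$ from $s_1$ to $s_2$. Thus for $f\in D_{0>}$ strictly increasing with $f(0)=0$, $\wh f$ has no jumps, i.e., is continuous; $\wh f(0)=\inf\{s:f(s)>0\}=0$ by right-continuity of $f$ at $0$; and for any $t>0$, right-continuity supplies $\delta>0$ with $f(\delta)<t$, hence $\wh f(t)\geq\delta>0$, making $0$ a point of right increase. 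Conversely, given $g\in D_{0\geq}$ continuous with $0$ a right-increase point, $\wh g$ has no flat spots (so is strictly increasing) and $\wh g(0)=0$; hence $\wh g\in D_{0>}$ and $g=\wh{\wh g}=\Cal I(\wh g)\in \Cal I(D_{0>})$ by (iii). The main obstacle is the boundary bookkeeping in this $f\leftrightarrow \wh f$ dictionary: one has to be careful about whether level sets are open or closed at their left endpoints, and about strict versus non-strict inequalities at jump values and flat-spot heights. Once (iii) is formalized via the two-sided sandwich above, (iv) reduces to tabulating these endpoint cases, and the rest is routine.
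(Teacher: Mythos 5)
Your proof is correct. A point to note up front: the paper gives no proof of this lemma at all --- it is stated with an immediate $\qed$, as a fact the reader is expected to verify --- so there is no ``paper argument'' to compare against. Your decomposition into measurability, stability of $D_\geq$, involutivity on $D_\geq$, and the characterisation of the image is a sensible and complete way to do it. The measurability argument via $\{f:\wh f(t)<a\}=\bigcup_{r\in\Q,\,r<a}\{f:f(r)>t\}$ (using right-continuity of $f$ to pass to rational $r$) is correct and is exactly the kind of thing the paper leans on elsewhere (the coordinate-evaluation generation of the $J_1$ Borel $\sigma$-algebra is invoked in the proof of Proposition 4.1). The two-sided sandwich for $\wh{\wh f}=f$ on $D_\geq$ is the right mechanism, and the jump/flat-spot dictionary gives (iv) cleanly.

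Two small points worth tightening. First, where you claim $\wh f(0)=0$ for $f\in D_{0>}$ ``by right-continuity of $f$ at $0$'': the correct reason is strict monotonicity, not right-continuity. Right-continuity at $0$ only gives $f(s)\to 0$ as $s\downarrow 0$, which by itself is compatible with $f\equiv 0$ on a right-neighbourhood of $0$; it is $f\in D_>$ (strictly increasing) together with $f(0)=0$ that forces $f(s)>0$ for every $s>0$, hence $\wh f(0)=\inf\{s:f(s)>0\}=0$. (Right-continuity \emph{is} what you need in the very next sentence, to produce $\delta>0$ with $f(\delta)<t$.) Second, in part (ii) it is worth noting that $\wh f$ is nondecreasing and right-continuous for \emph{any} $f\in D$ (the level sets $\{s:f(s)>t\}$ shrink as $t$ increases, and increase to $\{s:f(s)>t\}$ along any $t_n\downarrow t$, giving right-continuity of the infimum without using monotonicity of $f$); what requires $f\in D_\geq$ is the finiteness of $\wh f(t)$ and the limit behaviour at $\pm\infty$, which is also the only setting in which the involution claim genuinely makes sense. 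Neither point affects the correctness of your argument.
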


\subsection{Two pairs of flows}
Choosing $\beta>0$, the {\em scaling flow} $\tau_t$ of index $\beta $ on  $D$ is defined by
\[
(\tau_t f)(x)= \frac{f(e^t x)}{e^{\beta t}}.\] The {\em increment flow} $\eta_t$ is defined by
\[
(\eta_t f)(x)= f(x+t)- f(t)
.\]

\begin{prop}\label{p:firstcommreln}
 The scaling flow  is  $J_1$--continuous,
i.e.~it is a jointly continuous function from $D\times \R$ to $D$;
the increment flow  is (jointly,  Borel) measurable.  The pair of
flows satisfies the following commutation relation: for all
$s,t\in\R$, 
\begin{equation}\label{commutation}
\tau_t\circ \eta_s= \eta_{e^{-t}s}\circ \tau_t.
\end{equation}
\end{prop}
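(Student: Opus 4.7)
My plan is to handle the three claims in order of increasing subtlety. The commutation relation \eqref{commutation} is a direct computation: unwinding,
\[
(\tau_t\circ\eta_s f)(x) = \frac{(\eta_s f)(e^t x)}{e^{\beta t}} = \frac{f(e^t x + s) - f(s)}{e^{\beta t}},
\]
while
\[
(\eta_{e^{-t}s}\circ\tau_t f)(x) = (\tau_t f)(x + e^{-t}s) - (\tau_t f)(e^{-t}s) = \frac{f(e^t x + s) - f(s)}{e^{\beta t}},
\]
so both sides agree.

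For joint $J_1$-continuity of the scaling flow, I would use the standard characterization of $J_1$-convergence on $D_\R$: $f_n\to f$ iff there exist increasing homeomorphisms $\lambda_n$ of $\R$ with $\lambda_n\to\mathrm{id}$ uniformly on compacts and $f_n\circ\lambda_n\to f$ uniformly on compacts. Given $(f_n,t_n)\to(f,t)$, I choose such $\lambda_n$ and set $\mu_n(x)=e^{-t_n}\lambda_n(e^t x)$. Each $\mu_n$ is again an increasing homeomorphism of $\R$ with $\mu_n\to\mathrm{id}$ uniformly on compacts, and substitution gives
\[
(\tau_{t_n}f_n)(\mu_n(x))=\frac{f_n(\lambda_n(e^t x))}{e^{\beta t_n}}\longrightarrow \frac{f(e^t x)}{e^{\beta t}}=(\tau_t f)(x),
\]
uniformly for $x$ in any compact subset of $\R$, since then $e^t x$ stays in a compact set where the convergence $f_n\circ\lambda_n\to f$ is uniform. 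Hence $\tau_{t_n}f_n\to\tau_t f$ in $J_1$.

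For Borel measurability of the increment flow, I would invoke the fact that the Borel $\sigma$-algebra on $D_\R$ with the $J_1$ topology is generated by the evaluation maps $f\mapsto f(q)$ over any countable dense set of times, so it suffices to show that for each $q\in\Q$ the scalar map $(f,t)\mapsto (\eta_t f)(q)=f(q+t)-f(t)$ is Borel measurable. This in turn reduces to the joint Borel measurability of the evaluation $(f,t)\mapsto f(t)$ on $D_\R\times\R$, a standard fact: by right-continuity of c\`adl\`ag paths, the evaluation is the pointwise limit of the jointly $J_1$-continuous maps $(f,t)\mapsto h^{-1}\int_t^{t+h}f(s)\,\dd s$ as $h\downarrow 0$, hence is Borel. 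The main obstacle I expect is bookkeeping in the time-change construction: while pointwise convergence at continuity points of $f$ is immediate, I must verify uniform-on-compacts convergence with care under the dilation $x\mapsto e^t x$ and check that the composed $\mu_n$ are genuine increasing homeomorphisms of $\R$ converging uniformly to $\mathrm{id}$. The measurability of the evaluation, clean as a black box, is precisely where the failure of $\eta_t$ to be continuous resides: jump times of $f$ become the set where $t\mapsto f(t)$ is discontinuous.
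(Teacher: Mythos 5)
Your proof is correct, and it is more self-contained than the paper's. The commutation computation is of course the same (the paper simply declares it ``easy to check''). Where you diverge is in establishing continuity and measurability: the paper defers the continuity of $\tau_t$ to \cite{FisherTalet11a} and \cite{FisherTalet11b}, and for the measurability of $\eta_t$ it invokes the identification of the $J_1$--Borel $\sigma$--algebra on $D$ with the restriction of the Kolmogorov $\sigma$--algebra on $\r^\r$ and then asserts that $\eta_t$ is Kolmogorov--Borel on $\r^\r$. You instead give a direct time--change argument for joint $J_1$--continuity of $\tau_t$ (the choice $\mu_n(x)=e^{-t_n}\lambda_n(e^t x)$ is exactly right; note the mixed use of $e^t$ inside $\lambda_n$ and $e^{-t_n}$ outside, which makes the cancellation and the uniform convergence both come out cleanly), and a direct argument for the Borel measurability of $\eta_t$ on $D_\R$ itself, approximating the evaluation $(f,t)\mapsto f(t)$ by the jointly continuous running averages $h^{-1}\int_t^{t+h}f$ and letting $h\downarrow 0$. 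Your route has the advantage of working entirely within $D_\R$: the paper's passage through $\r^\r$ is slightly delicate, since the variable--time evaluation $(f,t)\mapsto f(t)$ on all of $\r^\r$ with the product $\sigma$--algebra is not Borel (product--measurable sets depend only on countably many coordinates, and $f(t)$ ranges over uncountably many as $t$ varies); it is only after restricting to $D$, where c\`adl\`ag regularity ties $f(t)$ to a countable dense set of coordinates, that the measurability becomes available — which is precisely the ingredient your averaging argument supplies directly. One small caution: the joint continuity of $(f,t)\mapsto \int_t^{t+h}f$ deserves a sentence, namely that $J_1$--convergence gives convergence at all but countably many points together with eventual uniform boundedness on compacts, so dominated convergence applies; otherwise the argument is complete.
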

\begin{proof} It is easy to check \eqref{commutation}; for 
the continuity and 
measurability of $\tau_t$ see \cite{FisherTalet11a} and \cite{FisherTalet11b}.
The $\sigma$--algebras generated by the $J_1$-- and uniform--on--compact
topologies on $D$ are the same. Moreover this agrees with the restriction to $D$
of the Kolmogorov $\sigma$--algebra, that generated by the 
product topology on $\r^\r$, i.e.~by the coordinate projections; the
reason is that  
since a path in $D$ has left and right limits, the uniform neighborhood of a path is 
determined by a coordinate neighborhood at a countable dense set of points.
One checks that the increment flow $\eta_t$ on $\r^\r$ is Borel measurable for the 
Kolmogorov $\sigma$--algebra, a fortiori for its restriction to  $D$.
\end{proof}

(Regarding equality of the $\sigma$--algebras and further references  see also 
Theorem 11.5.2 of \cite{Whitt02}.) 
 
\smallskip

Fixing now $\alpha>0$, we let $\tau_t$ denote the scaling flow of index
$1/\alpha$ acting on $D_{0>}$ and  $\wh \tau_t$  the scaling flow of index 
$\alpha$ on $\wh D_{0>}$; $\eta _t$ is the increment flow on $D_{0>}$, while
$\wh\eta_t$ on $\wh D_{0>}$ is defined by duality:
\[
\wh \eta_t \wh f= \wh {(\eta_t  f)}.
\]
The increment flow  on the dual space 
$\wh D_{0>}$ is denoted $\bbar\eta_t$.  We  call $\wh\eta_t$ 
the {\em increment subflow}, as it flows
along a singular subset of times of $\bbar\eta_t$, skipping over
flat intervals of the path $\wh f$.

\begin{prop}\label{p:commreln} 
\noindent 
$(i)$ We have these topological isomorphisms of flows:
\[\wh \tau_{t/\alpha}\circ\Cal I= \Cal I\circ \tau_t \text{ and }
\wh \eta_t\circ\Cal I= \Cal I\circ \eta_t
.\]

\noindent 

$(ii)$ 
$\wh \tau_{ t}$ is continuous, $\wh\eta_t$  is Borel  measurable, and they
satisfy
 the commutation relation
\[
\wh\tau_t\circ \wh\eta_s= \wh\eta_{e^{-\alpha t}s}\circ \wh\tau_t\;\;\;\;\;\;\;\forall s,t.
\]
\end{prop}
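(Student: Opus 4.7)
My plan is to prove part (i) by a direct computation with generalized inverses, and then deduce part (ii) by transporting the already-established commutation relation (\ref{commutation}) through the conjugacy given in (i).

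For the first identity in (i), I would fix $f\in D_{0>}$ and compute $\Cal I(\tau_t f)$ from the definition of the generalized inverse: for $y\geq 0$,
\[
\Cal I(\tau_t f)(y)=\inf\{s:\,f(e^t s)/e^{t/\alpha}>y\}=\inf\{s:\,f(e^t s)>e^{t/\alpha}y\}.
\]
Changing variables $u=e^t s$ pulls out a factor $e^{-t}$, giving
$\Cal I(\tau_t f)(y)=e^{-t}\wh f(e^{t/\alpha}y)$,
and this is precisely $\wh\tau_{t/\alpha}(\wh f)(y)=\wh f(e^{t/\alpha}y)/e^{\alpha\cdot t/\alpha}$. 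The same calculation for $y<0$ (or by continuity of $\wh f$ when $f\in D_{0>}$, using Lemma \ref{l:involution}) finishes the first identity. The second identity, $\wh\eta_t\circ\Cal I=\Cal I\circ \eta_t$, is nothing more than the definition $\wh\eta_t\wh f=\wh{(\eta_t f)}$; I would just note that $\eta_t$ preserves $D_{0>}$ (since $\eta_t f$ is again strictly increasing with $\eta_t f(0)=0$) so the definition makes sense and agrees with the stated intertwining.

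For part (ii), continuity of $\wh\tau_t$ is a direct instance of the $J_1$--continuity of the scaling flow established in Proposition \ref{p:firstcommreln} (applied with index $\alpha$), and Borel measurability of $\wh\eta_t$ follows either from the same $\sigma$--algebra argument as for $\eta_t$ in Proposition \ref{p:firstcommreln}, or by writing $\wh\eta_t=\Cal I\circ\eta_t\circ\Cal I$ and invoking Borel measurability of $\Cal I$ from Lemma \ref{l:involution}.

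For the commutation relation, I would start from (\ref{commutation}), $\tau_t\circ\eta_s=\eta_{e^{-t}s}\circ\tau_t$, and conjugate by $\Cal I$. Applying $\Cal I$ on the left and using the two identities of part (i) to push $\Cal I$ through each flow in turn yields
\[
\wh\tau_{t/\alpha}\circ\wh\eta_s\circ\Cal I=\wh\eta_{e^{-t}s}\circ\wh\tau_{t/\alpha}\circ\Cal I.
\]
Since $\Cal I$ restricted to $D_{0>}\leftrightarrow\wh D_{0>}$ is a bijection (Lemma \ref{l:involution}), I can cancel it on the right, and reparametrizing the scaling time by $t'=t/\alpha$ (so $e^{-t}=e^{-\alpha t'}$) gives exactly $\wh\tau_{t'}\circ\wh\eta_s=\wh\eta_{e^{-\alpha t'}s}\circ\wh\tau_{t'}$.

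I do not expect a serious obstacle: the only points requiring care are checking that $\eta_t$ preserves $D_{0>}$, checking that $\Cal I$ really is a bijection between the relevant subspaces (so that the cancellation after conjugation is legitimate), and doing the $y<0$ case for the scaling--inverse identity. The bookkeeping for the change of variable in the $\inf\{\,\cdot\,\}$ defining $\Cal I$ is the most error-prone step, but it is mechanical.
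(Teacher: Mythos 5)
Your proposal is correct and follows essentially the same route as the paper: part (i) is verified by the same direct computation with the generalized inverse (your change of variables $u=e^t s$ is exactly the step the paper compresses into one equality), and part (ii) is obtained, as in the paper, by combining part (i) with the commutation relation (\ref{commutation}) and reparametrizing the scaling time — you conjugate the operator identity abstractly and cancel $\Cal I$, while the paper chains the same substitutions applied to a fixed $\wh f$, but these are the same argument.
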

\begin{proof}
To prove $(i)$, we write
\[
\wh {\tau_t f} (x) = \inf\{u: \tau_t f(u)>x\}=  \inf\{u: f(e^t\, u)>x\,e^{t/\alpha}\}=\frac{1}{e^t} \wh f( x\, e^{t/\alpha}) = \wh \tau_{t/\alpha} \wh f (x).
\]
By Lemma \ref{l:involution}, $\wh \eta_s $ is
a Borel measurable flow. For the commutation relation of  $\wh\tau_t, \wh \eta_s$ we use $(i)$
 together with \eqref{commutation}, as for all $f\in
\wh D_{0>}$,
\[
\wh\tau_t( \wh\eta_s\wh f)=\wh\tau_t(\wh{\eta_s f})=
\wh{(\tau_{\alpha t}{\eta_s f})}=
\wh{(\eta_{e^{-\alpha t}s}\;\tau_{\alpha t} f)}
= \wh\eta_{e^{-\alpha t}s}(\wh{\tau_{\alpha t}f})
=
 \wh\eta_{e^{-\alpha t}s}(\wh\tau_{ t}\wh f).\]
\end{proof}
 The relationship between $\eta_t$ and
$\bbar \eta_t$ is best expressed in terms of the {\em completed graph}
$\Gamma_f$ of $f$, see also \cite{FisherTalet11b}: 
\[
\Gamma_f=\{(x,y)\in \R\times \R:\, f(x^-)\leq y\leq f(x)\equiv f(x^+)\},
\]
\noindent
where $f(x^-)$ (resp.~$f(x^+)$) stands for the limit from the left
(resp.~right) at $x$.  
Defining the {\em dual graph} $\wh \Gamma_f= \{(y,x):\, (x,y)\in
\Gamma_f\}$, the completed graph of the generalized inverse is
$\Gamma_{\wh f}=\wh{\Gamma}_f.$

\begin{lem}\label{p:dualincrement}
The increment flow $(D_{0>}, \eta_t)$ and the dual increment flow
$(\wh D_{0>}, \bbar \eta_t)$ are related as follows.
For $f\in D_{0>}$ we have:
\[
\wh\Gamma_{\eta_t f}=\Gamma_{\bbar \eta_{f(t)} \wh f}
\]
\end{lem}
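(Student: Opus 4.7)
The plan is to show that both sides of the identity are translates of a single reference graph by the vector $(-f(t),-t)$, using the correspondence between the increment flow and translation of completed graphs, together with the duality relation $\Gamma_{\wh g}=\wh\Gamma_{g}$ noted just before the lemma.

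First, I would observe that for any $g\in D_{0>}$ and any $s$, the completed graph of $\eta_s g$ is simply the translate of $\Gamma_g$ by $(-s,-g(s))$. Indeed, from $(\eta_s g)(x)=g(x+s)-g(s)$ and the left-limit $(\eta_s g)(x^-)=g((x+s)^-)-g(s)$, the condition $(x,y)\in\Gamma_{\eta_s g}$ is equivalent to $(x+s,\,y+g(s))\in\Gamma_g$. Applying this with $g=f$ and $s=t$ gives
\[
\Gamma_{\eta_t f}=\Gamma_f-(t,f(t)),
\]
and applying it with $g=\wh f$ and $s=f(t)$ gives
\[
\Gamma_{\bbar\eta_{f(t)}\wh f}=\Gamma_{\wh f}-\bigl(f(t),\,\wh f(f(t))\bigr).
\]

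Second, I would dualize the first identity using $\Gamma_{\wh f}=\wh\Gamma_f$, which is the content of the sentence preceding the lemma. The dual graph operation simply swaps the two coordinates, so translation by $(t,f(t))$ in $\Gamma_f$ dualizes to translation by $(f(t),t)$ in $\wh\Gamma_f=\Gamma_{\wh f}$. Hence
\[
\wh\Gamma_{\eta_t f}=\Gamma_{\wh f}-(f(t),t).
\]

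Third, comparing the two computed expressions, the identity reduces to checking $\wh f(f(t))=t$. Since $f\in D_{0>}$ is strictly increasing and right-continuous, for every $u>t$ one has $f(u)>f(t)$ strictly, whereas for $u\leq t$ one has $f(u)\leq f(t)$; therefore $\wh f(f(t))=\inf\{u:\,f(u)>f(t)\}=t$. This closes the argument. The only delicate point is to be careful with the definition of $\Gamma$ at the jump times of $f$ (equivalently, at the flat intervals of $\wh f$): the completed-graph convention includes the entire vertical segment $[f(x^-),f(x)]$ and its dual horizontal segment, so the translation identities above hold verbatim at every point, jump or not. Once this bookkeeping is in place, no further computation is required.
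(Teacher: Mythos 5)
Your proof is correct and follows essentially the same route as the paper's: both arguments rest on the observation that the increment flow translates the completed graph by $-(s,g(s))$, the coordinate-swap behaviour of dualization, and the identity $\wh f(f(t))=t$ valid for strictly increasing $f$. The only difference is organizational — you apply the translation formula to both sides and compare, whereas the paper performs a single chain of equalities from $\wh\Gamma_{\eta_t f}$ to $\Gamma_{\bbar\eta_{f(t)}\wh f}$ — and you supply a bit more detail verifying the translation identity and the evaluation $\wh f(f(t))=t$.
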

\begin{proof} Since $f\in D_{0>}$, every $t$ is a point of right increase of
$f$, so $\wh f( f(t))= t$.  Now,  $\eta_t$ acts on the completed graph via:
\[
\Gamma_{\eta_t f}= \Gamma_f-\bigl(t, f(t)\bigr), \;\;\;\; {\text{so}}
\]
\[
\wh\Gamma_{\eta_t f}= \wh{\bigl(\Gamma_f-(t,
  f(t)\bigr)}=\wh\Gamma_f-(f(t),t)= \Gamma_{\wh f}-\bigl(f(t),\wh
f(f(t))\bigr)=\Gamma_{\bbar \eta_{f(t)} \wh f}.\] \end{proof}
\begin{cor}\label{c:returntime}
Let $B\subseteq D_{0>}$ satisfies that  
the return--time function $r(f) = \min\{t>0:\, \eta_t(f)\in B\}$ exists
for all $f\in B$ (and so is everywhere
finite and strictly positive). 
 Then $\wh B$ satisfies this for the increment subflow  $\wh\eta_t$ with
return--time function $\wh r(\wh f)= r(f)$, and also for the dual 
increment flow $\bbar \eta_t$, with return--time function
$\bbar r(\wh f)= f(r(f))= f(\wh r(\wh f))$.
\end{cor}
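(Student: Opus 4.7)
Recall from Lemma~\ref{l:involution} that $\Cal I$ is an involution between $D_{0>}$ and $\wh D_{0>}$, so we may identify $\wh B = \Cal I(B)$ and $f\in B \Leftrightarrow \wh f\in \wh B$. The first assertion is then immediate from Proposition~\ref{p:commreln}$(i)$: since $\wh\eta_t\circ\Cal I = \Cal I\circ \eta_t$, we have $\wh\eta_t\wh f = \wh{(\eta_tf)}\in \wh B$ iff $\eta_tf\in B$, whence $\wh r(\wh f)=r(f)$ and in particular $\wh r$ is everywhere finite and strictly positive.

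For the dual increment flow $\bbar\eta_t$, the key tool is Lemma~\ref{p:dualincrement}, which I would rewrite as the identity $\bbar\eta_{f(t)}\wh f = \wh{(\eta_tf)}$ for every $t\in\R$. Thus at each parameter of the form $s=f(t)$ with $t>0$, one has $\bbar\eta_s\wh f\in \wh B$ iff $\eta_tf\in B$. Because $f\in D_{0>}$ is strictly increasing with $f(0)=0$, the map $t\mapsto f(t)$ is order-preserving from $(0,\infty)$ onto $f\bigl((0,\infty)\bigr)$, so on this set of parameters the minimum over $s$ and over $t$ correspond: this would give $\bbar r(\wh f)= f(r(f))$, provided no hits of $\wh B$ occur at other values of $s$.

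The main obstacle, and the only substantive step, is therefore to rule out any $\bbar\eta_s\wh f\in \wh B$ for $s>0$ outside the image $f\bigl((0,\infty)\bigr)$. Such an $s$ must lie in a ``jump gap'' $[f(t^{-}),f(t))$ for some jump point $t>0$ of $f$, and this whole half-open interval is then a flat interval of $\wh f$ on which $\wh f\equiv t$. I would then compute directly:
\[
\bbar\eta_s\wh f(x) \;=\; \wh f(s+x)-\wh f(s)\;=\; t-t\;=\;0 \qquad \text{for all } x\in[0,\, f(t)-s),
\]
so $0$ is \emph{not} a point of right increase of $\bbar\eta_s\wh f$. By the characterization of $\wh D_{0>}$ in Lemma~\ref{l:involution}, this forces $\bbar\eta_s\wh f\notin \wh D_{0>}$, and in particular $\bbar\eta_s\wh f\notin \wh B$.

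Combining the two previous paragraphs, the hitting set $\{s>0:\bbar\eta_s\wh f\in\wh B\}$ equals $\{f(t):t>0,\ \eta_tf\in B\}$, and by the monotonicity of $f$ its minimum is $f(r(f))$. Hence $\bbar r$ is everywhere finite and strictly positive on $\wh B$, and $\bbar r(\wh f)=f(r(f))=f(\wh r(\wh f))$, as claimed.
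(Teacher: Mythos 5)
Your proof is correct, and it follows the paper's approach via Lemma~\ref{p:dualincrement}, but you make explicit the one step the paper glosses over. The paper concludes simply that $b=f(r(f))$ ``is the least such time'' for $\bbar\eta_t$, without arguing why no return to $\wh B$ can occur at parameters $s>0$ outside the image $f\bigl((0,\infty)\bigr)$. You correctly identify this as the substantive point and close it: such an $s$ lies in a flat interval $[f(t^-),f(t))$ of $\wh f$ arising from a jump of $f$, on which $\wh f\equiv t$, so $\bbar\eta_s\wh f$ vanishes identically on a right neighborhood of $0$; by the characterization of $\wh D_{0>}$ in Lemma~\ref{l:involution} (continuous, with $0$ a point of right increase), this forces $\bbar\eta_s\wh f\notin\wh D_{0>}\supseteq\wh B$. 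Together with the observation that $t\mapsto f(t)$ is order-preserving, this gives $\bbar r(\wh f)=f(r(f))$ rigorously. This is a worthwhile tightening of the paper's terse argument rather than a different route.
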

\begin{proof} For $\wh \eta_t$ this follows from the isomorphism
  with $\eta_t$. Indeed,
\[
\wh r(\wh f)= \min\{ t>0: \wh \eta_t \wh f \in \wh B\}= \min\{ t>0: \wh {\eta_t f} \in \wh B\}=r(f).
\]

For $\bbar\eta_t$, say
 for some $f\in B$,  $a= r(f)$. 
Then
$\eta_{a} f= g\in B$. From Lemma \ref{p:dualincrement}, 
\[
\wh
\Gamma_{ \eta_{a}  f}= \wh \Gamma_{g}= \Gamma_{\bbar \eta_{b} \wh f}\] for 
$b= f(a)= f(r(f))$. This is the least such time and so is the return time for the flow $\bbar \eta_t$.
\end{proof}

\section{Measures for the  increment flow}\label{s:measuredualincrement}
We now study invariant measures for the increment flow, beginning with
the dual flow of \S \ref{s:dual flows}.
The starting point will be the general framework of ergodic self--similar processes with  stationary increments.
Then in \S \ref{ss:incrementML} we 
specialize to completely asymmetric stable processes of index $\alpha\in (0,1)$ 
and their duals, the  Mittag--Leffler processes. In
\S\ref{ss:renewalflow} we treat the renewal flow, and in \S
\ref{ss:integergaps} we examine the special case of integer gaps.

\begin{defi}
By an {\em ergodic self--similar process $X(t)$ of index $\beta>0$} and with paths in $D$ 
we mean that we have 
a probability measure $\nu$ on $D= D_\R$ (resp.~$D_{\R^+}$)  a {\em two--sided} (resp.~{\em
  one--sided}) process which is 
invariant and ergodic for the scaling flow of index 
$\beta$.
We say the process has {\em stationary increments} iff this same 
measure is invariant for the increment flow $\{\eta_t\}_{t\in\r}$
(resp.~{\em semi}flow $\{\eta_t\}_{t\geq 0}$) in the case of $D_\R$ (resp.~$D_{\R^+}$).
\end{defi}

\begin{rem} 
The definitions of self--similar process, or a process with stationary increments, 
can equivalently be formulated for finite cylinder sets since these
uniquely determine the measure on $D$. A one--sided process which is $\{\eta_t\}_{t\geq 0}$--invariant
has a unique extension to 
a two--sided $\{\eta_t\}_{t\in\r}$--invariant process: one first extends via $\eta_T$--invariance for fixed $T<0$ 
to the Borel $\sigma$--algebra on $D_\r$ over times in the interval $[T, \infty)$, then notes that the 
nested union as $T\to -\infty$ of these generates the full $\sigma$--algebra of $D_{\r}$. If the one--sided
process is self--similar, then so is  the two--sided process. Note that self--similarity forces $X(0)= 0$ a.s.
\end{rem}
Now assume we are given a two--sided process which is self--similar with stationary increments, 
with paths in $D_{0>}$; we denote by $\nu$ the
probability measure on
path space  $D_{0>}$, which is 
invariant and  ergodic  for the flows $\tau_t$ and $\eta_t$. 
Via Proposition \ref{p:commreln} the pushed--forward measure $\wh \nu= \Cal I^*(\nu)$ on the dual space $\wh D_{0>}$ is  invariant for  $\wh\tau_t$ 
and for the increment subflow $\wh\eta_t$.

 We next define from this  a measure $\bbar\nu$
 which will be invariant for the dual increment flow
$\bbar\eta_t$ itself, and which may be infinite.

\smallskip

To carry this out, we first define functions $\Psi: D_{0>}\times \R\to \R$ and 
$\Phi, \wt\Phi, \wh\Psi: \wh D_{0>}\times \R\to \R$ 
by
\begin{eqnarray*}
\Psi(f,t)= f(t),\;\;\;\;& &\;\; \Phi(\wh f,t)= \wh f(t),\\
\wt \Phi(\wh f, t)= \wh f(f(t)),\;\;\;\;& &\;\; \wh\Psi(\wh f,t)= f(t).\end{eqnarray*}

\begin{prop}\label{p:is a cocycle} The functions $\Psi$ and  $\Phi$ are cocycles over the flows $\eta_t, \bbar \eta_t$ respectively, and  $\wt \Phi$ and $\wh \Psi$
are cocycles over $ \wh\eta_t$.
\end{prop}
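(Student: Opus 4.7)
The plan is to verify the cocycle identity $F(x, s+t) = F(x, s) + F(\tau_s x, t)$ directly from the definitions in each of the four cases. The underlying fact that makes everything work is Lemma \ref{p:dualincrement}: as an element of $\wh D_{0>}$, $\wh \eta_s \wh f$ coincides with $\bbar \eta_{f(s)} \wh f$, and under the involution $\Cal I$ it is identified with $\eta_s f$; equivalently, the ``primal'' path associated to $\wh \eta_s \wh f$ is $\eta_s f$. Measurability in each case is immediate from the Borel measurability of $\Cal I$ (Lemma \ref{l:involution}), the coordinate evaluations on $D$, and the three increment flows themselves (Proposition \ref{p:commreln}).

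For $\Psi$ over $\eta_t$, I compute
\[
\Psi(f, s) + \Psi(\eta_s f, t) = f(s) + \bigl(f(s+t) - f(s)\bigr) = f(s+t) = \Psi(f, s+t),
\]
using $(\eta_s f)(t) = f(s+t) - f(s)$. The argument for $\Phi(\wh f, t) = \wh f(t)$ over $\bbar \eta_t$ is word-for-word identical, since $\bbar \eta_t$ is by definition the increment flow on the dual path space, acting by the same formula $(\bbar\eta_s \wh f)(t) = \wh f(s+t) - \wh f(s)$.

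For $\wh \Psi(\wh f, t) = f(t)$ over $\wh \eta_t$, the key point is that the primal path corresponding to $\wh \eta_s \wh f$ is $\eta_s f$, so
\[
\wh \Psi(\wh f, s) + \wh \Psi(\wh \eta_s \wh f, t) = f(s) + (\eta_s f)(t) = f(s) + f(s+t) - f(s) = f(s+t) = \wh \Psi(\wh f, s+t).
\]
For $\wt \Phi(\wh f, t) = \wh f(f(t))$ over $\wh \eta_t$, either I invoke Lemma \ref{p:dualincrement} in the form $\wh \eta_s \wh f = \bbar \eta_{f(s)} \wh f$ to expand
\[
\wt \Phi(\wh \eta_s \wh f, t) = \wh f\bigl(f(s) + (\eta_s f)(t)\bigr) - \wh f(f(s)) = \wh f(f(s+t)) - \wh f(f(s)),
\]
so that the two terms telescope to $\wh f(f(s+t)) = \wt \Phi(\wh f, s+t)$; or, more cheaply, I observe that since $f \in D_{0>}$ is strictly increasing we have $\wh f \circ f = \mathrm{id}$, reducing $\wt \Phi(\wh f, t)$ to $t$, a trivially additive function of time.

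The proof is entirely mechanical; the one subtlety, and the only thing to be careful about, is bookkeeping on the dual side, namely that $\wh \eta_s$ advances time by $s$ in the primal parametrization but by $f(s)$ in the dual parametrization. Once that translation is in hand via Lemma \ref{p:dualincrement}, each verification is a one-line computation.
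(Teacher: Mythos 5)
Your proof is correct and takes essentially the same approach as the paper's: direct verification of the cocycle identity from the definitions, with the two key observations being that the dual of $\wh\eta_s \wh f$ is $\eta_s f$ (so the ``primal'' bookkeeping does the work for $\wh\Psi$), and that $\wt\Phi(\wh f,t)=t$ since every $t$ is a point of increase of $f$, which is your ``cheap'' option. The paper's proof is terser, declaring $\Phi,\Psi,\wh\Psi$ immediate and addressing only $\wt\Phi$ via that reduction, but the content is the same.
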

\begin{proof}
That $\Phi$, $\Psi$ and $\wh \Psi$ are cocycles follows from the definitions. We only say a word about $\wt \Phi$: since for  $f\in D_{0>}$  every $t$ is a point of increase,
 $\wt\Phi(\wh f,t)= t$.  And so 
\[
\wt\Phi(\wh f,t+s) =t+s= \wt\Phi(\wh f,t)+ \wt \Phi(\wh{\eta_t f},s)
=
\wt\Phi(\wh f,t)+ \wt \Phi(\wh\eta_t \wh f,s)
\]
so this is a cocycle over $\wh \eta_t$.
\end{proof}

Next we define the promised invariant measure for $\bbar\eta_t$.

First, since the flow $\eta_t$ preserves the probability  measure $\nu$, it is conservative
(by the Poincar\'e recurrence theorem); it is futhermore assumed ergodic, 
 and so 
 we can apply the 
  Ambrose--Kakutani theorem,  choosing  a finite measure cross--section $(B,\nu_B,T)$, 
with first return map $T$ and return--time function $r:\, B\to  (0,+\infty)$.
Setting $\wh T(\wh f)\equiv \wh{(T f)}$ and $\wh \nu_{\wh B}= \Cal I^*(\nu_B)$, 
by the isomorphism of $\eta_t$ with $\wh\eta_t$ we have that
$(\wh B,\wh\nu_B,\wh T)$ is a cross--section for the flow 
$(\wh D_{0>}, \wh\eta_t, \wh \nu)$,  with return time $\wh r(\wh f)= r(f)$.
We then form a second special flow over 
$(\wh B,\wh\nu_B,\wh T)$, choosing a different return--time function.

\begin{prop}\label{p:specialflowetahat}
The special flow over $(\wh B,\wh\nu_B,\wh T)$  with return time $\bbar r(\wh f)= 
\wh \Psi(\wh f,\wh r(\wh f))
$
is a special flow representation for 
$\bbar\eta_t$; indeed, for $\wh\nu_B$--a.e.~$\wh f\in\wh B$,
the return time for $\bbar\eta_t$ to 
$\wh B$ is $\bbar r(\wh f)$.
\end{prop}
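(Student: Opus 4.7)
The plan is to identify $\wh B$ as a measurable cross-section for $\bbar\eta_t$, to compute its return-time function and its first-return map, and then to invoke the special flow construction of \S\ref{ss:specialflows} (i.e.~the Ambrose--Kakutani correspondence of Theorem \ref{t:Ambrose}) to deduce the representation. The whole proof is essentially bookkeeping that transports the cross-section $B\subset D_{0>}$ for the increment flow $\eta_t$ across the three flows in play via the involution $\mathcal I$ of Lemma \ref{l:involution} and the dual-graph identity of Lemma \ref{p:dualincrement}.

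First I would apply Corollary \ref{c:returntime} to $B$: it yields directly that $\wh B$ is a cross-section for $\bbar\eta_t$ with return-time function $\bbar r(\wh f)=f(r(f))$. By the definition $\wh\Psi(\wh f,t)=f(t)$ and the identity $\wh r(\wh f)=r(f)$ already noted just above the proposition, this gives
\[
\bbar r(\wh f)=f(r(f))=f(\wh r(\wh f))=\wh\Psi(\wh f,\wh r(\wh f)),
\]
which is the return-time formula claimed in the statement. Measurability of $\wh B$ as a cross-section is inherited from that of $B$ via the Borel involution $\mathcal I$, and $\wh\nu_B=\mathcal I^*(\nu_B)$ is the associated finite cross-section measure.

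Second, I would identify the first-return map of $\bbar\eta_t$ to $\wh B$ with $\wh T$. By Lemma \ref{p:dualincrement}, $\wh{(\eta_t f)}=\bbar\eta_{f(t)}\wh f$, so flowing $\wh f$ by $\bbar\eta_{\bbar r(\wh f)}$ produces
\[
\bbar\eta_{f(r(f))}\wh f=\wh{(\eta_{r(f)} f)}=\wh{(Tf)}=\wh T\wh f\in\wh B,
\]
and by the discreteness of visits this is the first return. Thus the cross-section data $(\wh B,\wh\nu_B,\wh T)$ together with return-time $\bbar r$ are exactly those produced by the Ambrose--Kakutani procedure applied to $(\wh D_{0>},\bbar\eta_t)$ with cross-section $\wh B$.

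Finally, the canonical map $(\wh f,s)\mapsto \bbar\eta_s\wh f$ sends the special flow space $\{(\wh f,s):\wh f\in\wh B,\ 0\le s\le\bbar r(\wh f)\}$ (with the identification $(\wh f,\bbar r(\wh f))\sim(\wh T\wh f,0)$ justified by the computation above) bijectively onto the $\bbar\eta_t$-orbit of $\wh B$, intertwining the vertical unit-speed translation with $\bbar\eta_t$; this is the desired special flow representation, and it simultaneously defines $\bbar\nu$ as the induced measure of \S\ref{ss:specialflows}. I do not see a substantial analytic obstacle here: the only delicate point is keeping straight the three flows $\eta_t$, $\wh\eta_t$ and $\bbar\eta_t$ and their return times, and that is precisely what Lemma \ref{p:dualincrement} and Corollary \ref{c:returntime} are designed to handle.
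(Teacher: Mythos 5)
Your proof is correct and follows the same route as the paper, which simply invokes Corollary \ref{c:returntime} and leaves the remaining bookkeeping implicit. You have merely unpacked the application of that corollary, verified the return-time formula via $\wh\Psi(\wh f,t)=f(t)$ and $\wh r(\wh f)=r(f)$, and confirmed (using Lemma \ref{p:dualincrement}) that the first-return map is $\wh T$, all of which is exactly what the paper's one-line proof tacitly relies on.
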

\begin{proof}
But this is now immediate from Corollary \ref{c:returntime}.
\end{proof}

\noindent
We transport the special flow measure  to a measure written $\bbar
\nu$  on the dual increment flow
$(\wh D_{0>},\bbar\eta_t)$. The next step will be to calculate its
total mass; for that we first show:

\begin{prop}\label{p:expectedvalofcocycles}
The integrals of the cocycles 
are $\I(\Psi)=\I(\wh\Psi)=\int_{D_{0>}} f(1)\, \dd\nu(f)$ and $\I(\Phi)=\I(\wt\Phi)=1$.
\end{prop}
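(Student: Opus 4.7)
The plan is to compute the four integrals directly, using Definition \ref{d: expectedvalue}, the isomorphism $\Cal I$, and the special flow representation from Proposition \ref{p:specialflowetahat} together with part $(iii)$ of Proposition \ref{p:expectedvalue}. Three of the four computations are essentially one-line consequences of the definitions; the remaining one, $\I(\Phi)=1$, is the substantive step and depends on the precise form of the return-time function chosen in Proposition \ref{p:specialflowetahat}.

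For $\I(\Psi)$ I would take $t=1$ in the defining expression $\I(\Psi)=\frac{1}{t}\int_{D_{0>}} f(t)\,\dd\nu(f)$; independence of $t$ is guaranteed by Proposition \ref{p:expectedvalue}$(i)$, giving $\I(\Psi)=\int_{D_{0>}} f(1)\,\dd\nu(f)$. For $\I(\wh\Psi)$ I would perform the change of variables $\wh f=\Cal I(f)$: since $\wh\nu=\Cal I^*(\nu)$ and $\wh\Psi(\wh f,t)=f(t)=\Psi(f,t)$, pushing the integral back to $D_{0>}$ yields $\I(\wh\Psi)=\I(\Psi)$. For $\I(\wt\Phi)$ I would invoke the fact that every real $t$ is a point of right-increase of $f\in D_{0>}$ (since $f$ is strictly increasing), so $\wh f(f(t))=t$ identically; hence $\wt\Phi(\wh f,t)\equiv t$ and, as $\wh\nu$ is a probability measure, $\I(\wt\Phi)= \frac{1}{t}\cdot t\cdot \wh\nu(\wh D_{0>})=1$.

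The main step is $\I(\Phi)=1$. By Proposition \ref{p:specialflowetahat}, $(\wh B,\wh\nu_B,\wh T)$ is a cross-section for $(\wh D_{0>},\bbar\nu,\bbar\eta_t)$ with return-time function $\bbar r(\wh f)=\wh\Psi(\wh f,\wh r(\wh f))=f(r(f))$. The cocycle $\Phi(\wh f,t)=\wh f(t)$ is nonnegative in the sense of Definition \ref{d: expectedvalue} because $\wh f\in D_{0\geq}$, so Proposition \ref{p:expectedvalue}$(iii)$ gives
\[
\I(\Phi)=\int_{\wh B}\Phi\bigl(\wh f,\bbar r(\wh f)\bigr)\,\dd\wh\nu_B(\wh f)=\int_{\wh B}\wh f\bigl(f(r(f))\bigr)\,\dd\wh\nu_B(\wh f).
\]
Invoking once more the identity $\wh f\circ f=\mathrm{id}$ at points of right-increase collapses the integrand to $r(f)=\wh r(\wh f)$. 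Since $\wh\nu_B=\Cal I^*(\nu_B)$ and the cross-section mass formula \eqref{expreturn} reads $\nu(D_{0>})=\int_B r\,\dd\nu_B$, and since $\nu$ is a probability measure, we conclude
\[
\I(\Phi)=\int_{\wh B}\wh r(\wh f)\,\dd\wh\nu_B(\wh f)=\int_B r(f)\,\dd\nu_B(f)=\nu(D_{0>})=1.
\]

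The only point requiring any care is the repeated use of $\wh f\circ f=\mathrm{id}$, which is what makes the return time $\bbar r$ for the \emph{dual} increment flow and the cross-section measure $\wh\nu_B$ interlock with $\Phi$ in just the right way; everything else is bookkeeping through the isomorphism $\Cal I$ and the identity $\wh r(\wh f)=r(f)$ from Corollary \ref{c:returntime}.
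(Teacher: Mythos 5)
Your proof is correct and follows essentially the same route as the paper: both treat $\I(\Psi)$ and $\I(\wh\Psi)$ by setting $t=1$ and pushing through $\Cal I$, and both handle the substantive case $\I(\Phi)=1$ by passing to the cross-section $\wh B$ via Proposition \ref{p:expectedvalue}$(iii)$ and using $\bbar r(\wh f)=f(\wh r(\wh f))$ with $\wh f\circ f=\mathrm{id}$ to collapse $\wh f(\bbar r(\wh f))$ to $\wh r(\wh f)$. The only difference is cosmetic: you compute $\I(\wt\Phi)$ directly from $\wt\Phi(\wh f,t)\equiv t$ rather than routing through the cross-section as the paper does, and you evaluate $\int_{\wh B}\wh r\,\dd\wh\nu_{\wh B}$ by transporting back to $(B,\nu_B,r)$ instead of invoking the mass formula for $(\wh D_{0>},\wh\nu,\wh\eta_t)$ directly, which amounts to the same thing.
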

\begin{proof}
The integral of the cocycle $\Psi$ is, by $(i)$ of Proposition \ref{p:expectedvalue},  \[
\I(\Psi)=\int_{D_{0>}} \Psi(f,1)\,\d \nu(f)= \int_{D_{0>}} f(1)\,\d \nu(f).\]
Similarly, 
\[
\I(\wh\Psi)= \int_{\wh D_{0>}} \wh\Psi(\wh f,1)\,\d \wh \nu(\wh f)= \int_{\wh D_{0>}} f(1)\,\d \wh \nu(\wh f)= \int_{D_{0>}} f(1)\,\d \nu(f).\]

To evaluate the integral of $\Phi$, we choose, as in Proposition \ref{p:specialflowetahat}, a finite measure cross--section $\wh B$ for $\wh \eta_t$
with return--time function $\wh r$. This is also a cross--section for $\bbar \eta_t$, with return time $\bbar r(\wh f)$ and with equal base measure $\bbar \nu_{\wh B}=\wh \nu_{\wh B}$.
We have, using Proposition \ref{p:expectedvalue} $(iii)$,
\[
\I(\Phi)= \int_{\wh D_{0>}} \Phi(\wh f,1)\,\d \bbar \nu(\wh f)=
\int_{\wh B} \Phi(\wh f,\bbar r(\wh  f))\,\d \bbar \nu_{\wh B}(\wh f)=
\int_{\wh B} \wh f(\bbar r(\wh  f))\,\d \bbar \nu_{\wh B}(\wh f)=
\int_{\wh B} \wh f(\bbar r(\wh  f)) \,\d \wh \nu_{\wh B}(\wh f).\]
Now by Corollary \ref{c:returntime}, $\bbar r(\wh  f)= f(\wh r(\wh f))$,
so $\wh f(\bbar r(\wh  f))= \wh f(f(\wh r(\wh f)))=\wh r(\wh f)$ and  this is
\[
\int_{\wh B}\wh r(\wh f) \,\d \wh \nu_{\wh B}(\wh f)
=\wh\nu(\wh D_{0>})=1.
\]
And lastly, 
\[
\I(\wt \Phi)= \int_{\wh B} \wt \Phi(\wh f, \wh r(\wh f))\,\d \wh \nu_{\wh B}(\wh f)=
\int_{\wh B}\wh r(\wh f) \,\d \wh \nu_{\wh B}(\wh f)
=1.
\]
\end{proof}

\begin{cor} \label{c:totalmass}
The dual increment flow
 $(\wh D_{0>},\bbar\nu,\bbar\eta_t) $ is conservative, and it is ergodic iff 
the increment flow $(D_{0>},\nu,\eta_t)$ is;
the total mass for the invariant measure $\bbar \nu$ is
$\bbar\nu(\wh D_{0>})= \int_{D_{0>}} f(1)\, \dd\nu(f)$.
\end{cor}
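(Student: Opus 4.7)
The strategy is to exploit the special flow representation of $\bbar\eta_t$ given by Proposition \ref{p:specialflowetahat}, from which all three assertions follow by combining earlier results. By that proposition, $\bbar\eta_t$ is realized as a special flow over the base $(\wh B,\wh\nu_{\wh B},\wh T)$ with return-time function $\bbar r(\wh f)=\wh\Psi(\wh f,\wh r(\wh f))$, and $\bbar\nu$ is the transported special flow measure.

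For the total mass, formula \eqref{expreturn} yields
\[
\bbar\nu(\wh D_{0>}) \;=\; \int_{\wh B}\bbar r(\wh f)\,\dd\wh\nu_{\wh B}(\wh f) \;=\; \int_{\wh B}\wh\Psi(\wh f,\wh r(\wh f))\,\dd\wh\nu_{\wh B}(\wh f).
\]
The cocycle $\wh\Psi(\wh f,t)=f(t)$ is nonnegative a.s.\ on $\wh D_{0>}$ since $f\in D_{0>}$ is increasing with $f(0)=0$, so Proposition \ref{p:expectedvalue}(iii), applied to $\wh\Psi$ over $\wh\eta_t$ with cross-section $(\wh B,\wh\nu_{\wh B},\wh T)$ whose $\wh\eta_t$-return time is $\wh r$, identifies the right-hand side with $\I(\wh\Psi)$. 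By Proposition \ref{p:expectedvalofcocycles} this equals $\int_{D_{0>}}f(1)\,\dd\nu(f)$.

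For conservativity and the ergodicity equivalence I plan to invoke the principle recalled after \eqref{expreturn}: a special flow is conservative ergodic iff its base map is. The base map $(\wh B,\wh\nu_{\wh B},\wh T)$ is measure-theoretically isomorphic to the first return map $(B,\nu_B,T)$ of $\eta_t$ to $B$, via the involution $\Cal I$ of Lemma \ref{l:involution}; this follows from the relation $\wh\eta_t\circ\Cal I=\Cal I\circ\eta_t$ of Proposition \ref{p:commreln} together with the definition $\wh T(\wh f)=\wh{(Tf)}$. Since $\eta_t$ preserves the probability measure $\nu$, the map $T$ acts on a finite measure space and is therefore conservative by Poincar\'e recurrence; the same is true for $\wh T$, and hence $\bbar\eta_t$ is conservative. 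The standard equivalence that a conservative flow is ergodic iff its first return map to any cross-section is ergodic then gives the chain $\eta_t$ ergodic $\iff T$ ergodic $\iff \wh T$ ergodic (by the isomorphism) $\iff \bbar\eta_t$ ergodic, completing the proof.

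I do not expect a genuine obstacle; the one point requiring care is that $\I(\wh\Psi)$ may well be infinite (e.g.\ whenever $\int f(1)\,\dd\nu=+\infty$), so the nonnegative version of Proposition \ref{p:expectedvalue}(iii) must be invoked rather than the integrable version. Since $\wh\Psi\geq 0$ a.s.\ this causes no problem and both the finite- and infinite-mass cases are handled uniformly.
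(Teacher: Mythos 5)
Your proposal is correct and follows essentially the same route as the paper: realize $\bbar\eta_t$ as a special flow over $(\wh B,\wh\nu_{\wh B},\wh T)$ with return time $\bbar r(\wh f)=\wh\Psi(\wh f,\wh r(\wh f))$, compute the mass via \eqref{expreturn} and Propositions \ref{p:expectedvalue}(iii) and \ref{p:expectedvalofcocycles}, and transfer conservativity/ergodicity between the flow and its finite-measure cross-section map. The only difference is that you spell out the isomorphism chain $\eta_t\leftrightarrow T\leftrightarrow \wh T\leftrightarrow\bbar\eta_t$ and the nonnegativity of $\wh\Psi$ (needed since $\I(\wh\Psi)$ may be infinite) more explicitly than the paper does, which is a modest improvement in clarity rather than a different argument.
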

\begin{proof}
Again,   being conservative is automatic   when the 
space has finite measure; hence the cross--section map is recurrent. 
Ergodicity and recurrence  then pass from the cross--section map to the flow.

The total mass of the special flow is 
\[
\int_{\wh B} \bbar r(\wh f) \,\d \wh \nu_{\wh B}(\wh f)= 
\int_{\wh B} \wh\Phi(\wh f, \wh r(\wh f) )\,\d \wh \nu_{\wh B}(\wh f)= 
\I(\wh \Phi)= \I(\Psi)= \int_{D_{0>}} f(1)\, \d\nu(f)
\]
by $(iii)$ of Proposition \ref{p:expectedvalue} together with
Proposition \ref{p:expectedvalofcocycles}.
\end{proof}

\subsection{A measure for the increment flow of the Mittag--Leffler process}\label{ss:incrementML}
Here $Z$ denotes a positive and completely asymmetric two--sided
$\alpha$--stable process ($\alpha\in (0,1)$); its law $\nu$ lives on
 $D_{0>}$ and is invariant for $\tau_t$   the scaling flow of index
 $1/\alpha$. Its generalized inverse $\wh Z$, with
law $\wh \nu$ invariant for the scaling flow $\wh\tau_t$ of index
$\alpha$, lives on 
$\wh D_{0>}$, while the measure $\bbar \nu$ lives on $D_{0\geq}$.   We
write  $\bbar \eta_t$, $\wh \eta_t$ for the increment flow and subflow respectively.

\begin{prop}\label{p:nextcommreln}
The pairs of flows $\tau_t, \eta_t$ on $(D_{0>}, \nu)$, and 
$\wh \tau_t, \wh \eta_t$ on $(\wh D_{0>}, \wh \nu)$ are Bernoulli
flows of infinite entropy. They satisfy the  commutation relations and
properties in Propositions  \ref{p:firstcommreln} and
\ref{p:commreln}. 
The pair $\wh \tau_{ t},\bbar\eta_t$ satisfies the commutation
relation of Proposition  \ref{p:firstcommreln}; 
$(D_{0\geq},\bbar\nu,\bbar\eta_t)$ is a
conservative ergodic flow,
with $\bbar \nu$ a $\sigma$--finite, infinite measure.
\end{prop}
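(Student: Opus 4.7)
The commutation relations for $(\tau_t,\eta_t)$ and $(\wh\tau_t,\wh\eta_t)$ are Propositions 4.1 and 4.3, and the commutation relation for $(\wh\tau_t,\bbar\eta_t)$ is just Proposition 4.1 re-applied with scaling index $\alpha$ on the ambient space $\wh D_{0\geq}$, since the pointwise calculation there uses no particular measure. The continuity and measurability of the individual flows are already in place. The remaining content of the proposition is therefore: the Bernoulli-of-infinite-entropy property of the two probability-preserving pairs of flows, and the conservative, ergodic, $\sigma$-finite, infinite-mass properties of $(\wh D_{0\geq},\bbar\nu,\bbar\eta_t)$.

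For the Bernoulli claim I would pass through the Lamperti correspondence. Setting $Y(u)=e^{-u/\alpha}Z(e^u)$ produces a strictly stationary process on $\R$ whose time shift is isomorphic to the scaling flow $\tau_t$ on $(D_{0>},\nu)$. For a completely asymmetric $\alpha$-stable subordinator with $\alpha\in(0,1)$ the L\'evy measure $c\,x^{-\alpha-1}\dd x$ on $(0,\infty)$ has infinite total mass, so after the Lamperti change of time the driving jump mechanism is a stationary Poisson point process on $\R\times(0,\infty)$ of $\sigma$-finite infinite intensity; such Poisson shifts are Bernoulli flows of infinite entropy by the classical theory of Ornstein and Weiss. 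The increment flow $\eta_t$ is the translation action on the same infinite-activity L\'evy process, and is Bernoulli of infinite entropy by the same argument applied directly, without any time change. Proposition 4.3(i) then provides a measure-preserving Borel isomorphism $\Cal I:(D_{0>},\nu)\to(\wh D_{0>},\wh\nu)$ intertwining the two pairs of flows up to the rescaling $\tau_t\leftrightarrow\wh\tau_{t/\alpha}$, so Bernoullicity and infinite entropy pass to the dual pair verbatim.

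Turning to $\bbar\nu$: Corollary 5.1 delivers conservativity unconditionally, the iff-statement for ergodicity, and the total mass formula $\bbar\nu(\wh D_{0\geq})=\int_{D_{0>}}f(1)\dd\nu(f)=\e(Z(1))$. Combined with the ergodicity of $\eta_t$ on $(D_{0>},\nu)$ just obtained, this gives ergodicity of $\bbar\eta_t$, and the total mass is $+\infty$ because a positive $\alpha$-stable law with $\alpha\in(0,1)$ has no finite first moment. The $\sigma$-finiteness is immediate from the special-flow picture of Proposition 5.2: $\bbar\nu$ locally decomposes as $\dd\wh\nu_{\wh B}\otimes\dd t$ over the finite-measure base $\wh B$ with a.s.~finite measurable height $\bbar r$, and the sets $\{(\wh f,s): \wh f\in \wh B,\; \bbar r(\wh f)\leq n,\; 0\leq s\leq \bbar r(\wh f)\}$ have finite $\bbar\nu$-mass and exhaust $\wh D_{0\geq}$ modulo a null set under the flow.

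The only genuinely nontrivial step is the Bernoulli-with-infinite-entropy claim for the scaling flow; the Lamperti reduction to a stationary Poisson-driven process is the conceptual heart of the argument, and I expect this to be the main obstacle if one wishes to give a self-contained verification rather than quote the Ornstein--Weiss machinery. Everything else is either cited from earlier in the paper or a formal consequence of the special-flow representation already built in Section 5.
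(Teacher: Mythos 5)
Your proposal is essentially sound, but it departs from the paper's route on the one genuinely nontrivial point, so it is worth comparing. For the Bernoulli-with-infinite-entropy property of the scaling flow $\tau_t$, the paper simply cites Lemma~3.3 of \cite{FisherTalet11a}; for the increment flow $\eta_t$, the paper gives a short, elementary argument that bypasses Poisson-shift machinery entirely: the discrete increments $X_n = Z(n+1)-Z(n)$ are i.i.d.\ with law $G_{\alpha,1}$, so $Z\mapsto (X_n)_{n\in\Z}$ exhibits the time-one map $\eta_1$ as having a Bernoulli factor of infinite entropy; the same holds for each time-$1/k$ map, and the increasing sequence of $\sigma$-algebras so generated separates points of $D$, so $\eta_1$ is an inverse limit of Bernoulli maps and hence Bernoulli (Ornstein). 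The dual flows then inherit everything via $\Cal I$. Your route through the Lamperti correspondence and Ornstein--Weiss Poisson shifts is heavier machinery but gives a unified treatment of $\tau_t$ and $\eta_t$ at one stroke, and does not need to appeal to the earlier paper for $\tau_t$. The remainder of your argument (conservativity, ergodicity, $\sigma$-finiteness and infinite mass of $\bbar\nu$) tracks the paper: the common cross-section for $\wh\eta_t$ and $\bbar\eta_t$ is used to pass ergodicity, and the total mass is $\e(Z(1))=+\infty$ from Corollary~\ref{c:totalmass}.

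One caution about your Lamperti step: the formula $Y(u)=e^{-u/\alpha}Z(e^u)$ only sees the restriction of $Z$ to $\R^+$, since $e^u>0$. The process here is two-sided, and $\tau_t$ acts on all of $\R$. To make the argument watertight you should either observe that $Z|_{(-\infty,0]}$ and $Z|_{[0,\infty)}$ are independent and the scaling flow on $D_{0>}$ splits as a direct product of the two one-sided scaling flows (a product of Bernoulli flows of infinite entropy is still one), or work directly with the jump point process of the two-sided process on $\R\times(0,\infty)$ and check that the scaling action on it, suitably logarithmically reparametrized in the time coordinate, lands in the Ornstein--Weiss framework. This is a repairable gap rather than a wrong idea, but as written the argument overlooks half the path. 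Also, a trivial bookkeeping slip: the commutation relations you want to invoke are Propositions~\ref{p:firstcommreln} and \ref{p:commreln} (Propositions 4.1 and 4.2 in the paper's numbering), not 4.1 and 4.3.
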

\begin{proof}
That $\tau_t$ is a Bernoulli flow of infinite entropy on the
Lebesgue space $(D,\nu)$
is proved in Lemma 3.3 of \cite{FisherTalet11a}.
By Proposition \ref{p:commreln}, the flows $(D, \wh\nu,\wh \tau_{t/\alpha })$ and $(D, \nu,\tau_t)$ are isomorphic, so the same holds for $\wh \tau_t$. 

Next we consider the increment flows.
Since the stable process $Z$ has stationary  increments, $\eta_t$ preserves $\nu$. Since these increments are, 
moreover,  independent, $(X_n)_{n\in \Z}$ with 
$X_n\equiv Z(n+1)-Z(n)$ is an i.i.d.~sequence, with law $G_{\alpha,1}$.
This is an infinite entropy Bernoulli shift; its path space is $\Pi= \Pi_{-\infty}^{+\infty} \r$
with infinite product measure $\mu_\alpha$ of distribution $G_{\alpha,1}$ on each coordinate and left shift map $\sigma$.
Now $X_n= (\eta_n(Z))(1)$, so $Z\mapsto (X_n)_{n\in \Z}$ defines a 
measure--preserving map  from $(D, \nu, \eta_1)$ to $(\Pi, \mu_\alpha, \sigma)$.
That is, the time--one map $\eta_1$ has a homomorphic image which is a Bernoulli 
shift of infinite entropy. The same is true for each time $1/k$ map for 
$n=1,2,\dots$.  These increasing $\sigma$--algebras of Borel sets separate 
points of $D$, hence generate the Borel $\sigma$--algebra, so $\eta_1$ is itself indeed a Bernoulli shift and 
$(D,\nu,\eta_t)$ is a Bernoulli flow.

Now for $\alpha\in(0,1)$ and $Z$ positive and completely asymmetric by Propositions \ref{p:commreln} and 
\ref{p:nextcommreln}
the flow $(D, \nu, \eta_t)$ is isomorphic to 
$(D, \wh\nu, \wh\eta_t)$, hence this second flow
 is also  infinite entropy Bernoulli. 

As we saw above, the fact that the flows $(D, \wh\nu, \wh\eta_t)$ and $(D, \bbar\nu, \bbar\eta_t)$  
share a common cross--section 
allows the ergodicity of  $\wh\eta_t$ to pass over to recurrence and ergodicity for the infinite measure flow $\bbar\eta_t$.
Lastly, 
from Corollary \ref{c:totalmass}, the total mass is
$\bbar\nu(\wh D_{0>})= \e(Z(1))= +\infty$, as $0<\alpha<1$.
\end{proof}

\subsection{A measure for the renewal flow}\label{ss:renewalflow}

 We fix a probability measure $\mu_F$ on $ \r^+$   
with distribution function $F$ and place the infinite product measure
 $\mu= \otimes_{-\infty}^\infty\mu_F$ on $B\equiv \Pi_{-\infty}^\infty
 \r^+$. The left shift map $\sigma$ acts on the Lebesgue space $(B, \mu)$, 
with points $x= (\dots x_{-1}x_0 x_1\dots)$. Then 
$(B, \mu,\sigma)$ is  a Bernoulli shift (in the generalized sense: the
``state
space'' is  the support of $\mu_F$  and may be uncountable).
We define the {\em renewal flow} $(X, \rho,h_t )$ to be  the special flow 
over $(B, \mu, \sigma )$ with return--time function $r(\xx)=
x_0$, see \S \ref{ss:specialflows}.

Next we  represent
the renewal flow 
 as an increment flow on  the  nondecreasing paths
$D_{0\geq}$; this will make
 precise the relationship between the renewal flow and renewal
process. For this  
we define a map 
$\zeta: X\to  D_{0\geq}$ by
$\zeta(x)\equiv N_B(x,\cdot)$, where $N_B$ counts the number of
returns of the flow $h_s$ to
the cross--section as in Example \ref{e:Cross--sectionTimes}, equation \eqref{TGW}.
We write $\bbar \mu$ for the pushed--forward measure $\zeta^*(\rho),$ and $\wh \mu$ for the measure 
pushed forward from $\mu$ on the cross--section $B$.
We denote by $\bbar\eta_t$  the increment flow on $D_{0\geq}$  (extending the previous definition 
from $\wh D_{0>}\subseteq D_{0\geq}$). 

We have:
\begin{prop}\label{p:renewal flow} 
The renewal flow $(X,\rho,h_t)$ is
  conservative ergodic.
The map $\zeta$ gives a flow  isomorphism from 
the renewal flow $(X,\rho,h_t)$ to the increment flow $(D_{0\geq},\bbar \mu,\bbar\eta_t)$. The total mass of the  flow is
$\rho(X)=\int_{\R^+} s\, \dd F(s)$.
\end{prop}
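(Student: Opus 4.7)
The plan is to verify in order the three assertions: conservative ergodicity of the renewal flow, the total mass formula, and the flow isomorphism given by $\zeta$. The first two are short; the real work lies in showing that $\zeta$ is an almost-everywhere bijection that intertwines the flows.

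For conservativity and ergodicity I will observe that $(B,\mu,\sigma)$ is an infinite product of copies of the probability measure $\mu_F$ under the left shift, hence a (generalized) Bernoulli shift, and therefore conservative and ergodic. By the equivalence recalled in \S\ref{ss:specialflows}, the special flow $(X,\rho,h_t)$ inherits both properties. For the total mass I will apply \eqref{expreturn}: since $r(x)=x_0$ depends only on the zeroth coordinate,
\[
\rho(X)=\int_B r(x)\,\d\mu(x)=\int_{\R^+} s\,\d F(s).
\]

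For the isomorphism I plan to check four properties of $\zeta$. First, since $N_B(y,\cdot)$ is nondecreasing, integer-valued, c\`adl\`ag and zero at $0$, the map $\zeta$ takes values in $D_{0\geq}$, and joint measurability follows from the joint measurability of the cocycle $N_B$ (Example \ref{e:Cross--sectionTimes}). Second, the cocycle identity for $N_B$ yields the intertwining relation
\[
\zeta(h_t y)(s)=N_B(h_t y,s)=N_B(y,t+s)-N_B(y,t)=(\bbar\eta_t\zeta(y))(s),
\]
so $\zeta\circ h_t=\bbar\eta_t\circ\zeta$ for every $t\in\R$. Third, the measure-preserving property $\bbar\mu=\zeta^*(\rho)$ holds by the very definition of $\bbar\mu$.

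The main obstacle, as expected, will be almost-everywhere injectivity of $\zeta$. For $y=(x,s)\in X$ with $0\leq s<x_0$, the jumps of $\zeta(y)$ occur precisely at the hitting times of $B$ by the orbit through $y$, namely at $x_0-s,\;x_0-s+x_1,\dots$ in positive time and at $-s,\;-s-x_{-1},\dots$ in negative time. Since $\mu_F$ is supported on $(0,+\infty)$, off a $\rho$-null set every gap $x_i$ is strictly positive, so the jump set of $\zeta(y)$ is strictly ordered; $s$ is then recovered as the unique nonnegative number strictly less than the first positive jump, and the consecutive differences of jump times reconstruct the full biinfinite sequence $(x_i)_{i\in\Z}$. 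Hence $y$ is determined by $\zeta(y)$ on a set of full measure, completing the isomorphism.
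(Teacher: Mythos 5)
Your proposal is correct and follows the same route as the paper's proof, which is considerably terser (it simply asserts that $\zeta$ is a bijection and a conjugacy and invokes \eqref{expreturn} for the mass); you fill in the details, most usefully the reconstruction argument for injectivity. One small slip in phrasing: $s$ is not ``the unique nonnegative number strictly less than the first positive jump'' (that set is an interval, not a point); rather $s$ is the negative of the largest nonpositive jump time of $\zeta(y)$, after which $x_0$ is the first positive jump time plus $s$ and the remaining $x_i$ are consecutive differences of jump times, as you go on to say.
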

\begin{proof} Since the base map is conservative ergodic (it is a
  Bernoulli shift), so is the flow.
The map $\zeta $ is a bijection; that it is   a conjugacy, i.e. $\zeta\circ h_t= \bbar\eta_t\circ \zeta$, follows from the definition of the increment flow.  From \eqref{expreturn}, the total mass for the flow $(X,\rho,h_t)$ is $\int_B x_0\, \d \mu(\xx)=\int_{\R^+} s\, \d F(s)
.$\end{proof}
Note that we have defined two different invariant measures for the
same flow, the 
increment flow $\bbar\eta_t$  on $ D_{0\geq}$: 
$\bbar \nu$ for the two--sided Mittag--Leffler process and $\bbar \mu$ for the  renewal process.

\subsection{The case of integer gaps}\label{ss:integergaps}
For the  special case of a renewal
process with integer gaps
of distribution function $F$, we describe in this section
an alternate construction of the renewal flow as
the {\em suspension flow} over a map called the {\em renewal
transformation} defined by $F$, that is,  the special flow over that transformation with constant
return time one.
We begin by presenting 
 several equivalent
models for this map.

The gap
distribution  $\mu_F $ lives  on $\N^*\subseteq \R^+$. For a
first model of our map, we define 
$\mu=\otimes_{-\infty}^\infty\mu_F$ on $\Sigma=
\Pi_{-\infty}^{+\infty}\N^*$, so    $(\Sigma, \mu,\sigma)$ is a
Bernoulli shift with this countable alphabet. 
With $\xx= (\dots x_{-1}x_0 x_1, \dots)\in \Sigma$, and $B_k=\{\xx: x_0= k\},$ 
we  define the 
 tower transformation 
 $(\wh\Sigma, \wh\mu, T)$ with base $(\Sigma, \sigma, \mu)$ and  return time $k$ 
 over $B_k$. Thus,
 $\wh\Sigma= \{(\xx, j): \, 0\leq j\leq k-1\}\subseteq \Sigma\times\N$ with map $T((\xx, j))= (\xx, j+1)$ if 
$j<k-1$ and $T((\xx, j))= (\sigma(\xx), 0)$ if $j=k-1$. The tower
measure $\wh\mu$ is the restriction of the product of $\mu$ with
counting measure on $\N$ to $\wh\Sigma$; note that the
$\wh\mu$--measure of the tower base is $1$.
We call $(\wh\Sigma, \wh\mu, T)$ the {\em tower
  model} of the renewal transformation.

For a second  model which we term the {\em event process}, 
 let $\psi= \chi_B$ be the indicator
function of the tower base $B=\Sigma$, and given a point $(\xx, k)$ in
$\wh\Sigma$,    let $(Y_i)_{i\in \Z}\in \Pi_Y\equiv \Pi_{-\infty}^{+\infty}\{0,1\}$
be the sequence $Y_i=Y_i(x,k)= \chi_B\circ T^i((\xx, k))$. This defines a function from
$\wh\Sigma$ to $\Pi_Y$ which conjugates $T$ to the shift map $\sigma$.
We write $\mu_Y$ for the 
push-forward of $\wh\mu$, and  note that the $\mu_Y$--measure of $[Y_0=1]$  is $1$.

For a third model,
we define $\Psi((\xx,k),n) $
to be the cocycle over the tower model generated by $\psi= \chi_B$ (see \eqref{e:cocyclesum}). 
Writing $\wt N_n(\xx,k)\equiv \Psi((\xx,k),n)$, 
let $\wh\mu_{\wt N}$ denote the pushed--forward measure 
on $\Pi_{\wt N}\equiv\Pi_{-\infty}^{+\infty}\Z$, via the map which sends $(\xx,
k)$ to $\wt N_n=\wt N_n(\xx,k) $. Then
the {\em increment shift} $(\Theta(\wt N))_n= \wt N_{n+1}- \wt N_1$
preserves $\wh\mu_{\wt N}$.

The fourth model is
 a countable state Markov shift,  the {\em
  renewal shift}. The state space is $\N^*$; for the   probability vector 
$p=
(p_k)_{k\geq 1}$ on $\N^*$ with  $p_k=\mu_F (k)$,
we define the
 transition matrix 
\[P=\left[\begin{matrix}
p_1& p_2&p_3&p_4\dots\\
1 &  0&0&0\dots\\
0 &  1&0&0\dots\\
0 &  0&1&0\dots\\
\vdots& & & \\
\end{matrix}\right]\]

The shift--invariant Markov measure $\wh\mu_P$ on $\Pi =\Pi_{-\infty}^{+\infty} \N^*$ is then defined in the
usual way from the transition matrix plus an invariant row vector
$\pi\equiv(\pi_1,\pi_2,\dots)$.  As $\pi P=\pi$,
 choosing  $\pi_1=1$ 
determines this vector uniquely: $\pi_k= \sum_{i\geq k}^\infty p_i.$

We have:
\begin{prop}

These four models of renewal transformation with integer gap
distribution $F$ are measure
theoretically and topologically isomorphic:

\item{(i)} the  tower $(\wh\Sigma, \wh\mu, T)$ over the
  Bernoulli shift 
$(\Sigma, \mu,\sigma)$, with heights equal to the gaps;
\item{(ii)} 
the left shift map on the event process, $(\Pi_Y, \mu_Y,\sigma)$;
\item{(iii)} the increment shift $\Theta$ on  $(\Pi_{\wt N},
  \wh\mu_{\wt N})$;
\item{(iv)} 
the renewal shift $(\Pi,  \wh\mu_P,\sigma)$ with transition matrix $P$ where  $p_k=\mu_F(k)$,
and with  initial vector 
$\pi\equiv(\pi_1,\pi_2,\dots)$ given above.
The total mass of the renewal transformation equals 
$$\sum_{k=1}^\infty \pi_k= \sum_{k=1}^\infty \sum_{i=k}^\infty p_i=
\sum_{k=1}^\infty kp_k= \int_0^{\infty}s \, \dd F(s)
.$$

The renewal flow is measure theoretically and topologically isomorphic to the
suspension flow over the renewal transformation,
and its  total mass  is that of the transformation.
\end{prop}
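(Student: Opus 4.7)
The plan is to construct explicit intertwining maps between the four models, and then to identify the renewal flow with the suspension of the common transformation, using that integer gaps force the return--time function of the special flow to be integer--valued. I would begin with the isomorphism (i) $\leftrightarrow$ (iv). Define the ``countdown'' function $\phi:\wh\Sigma\to\N^*$ by $\phi(\xx,j)=x_0-j$, and map $(\xx,j)\mapsto (s_n)_{n\in\Z}$ with $s_n=\phi(T^n(\xx,j))$. This is a continuous bijection onto $\Pi$ intertwining $T$ with the left shift: $\phi(T(\xx,j))=\phi(\xx,j)-1$ whenever $\phi(\xx,j)>1$, while $\phi(T(\xx,j))=x_1$ when $\phi(\xx,j)=1$, which is precisely the transition rule encoded by $P$, and the independence of successive gaps $x_i$ under $\mu$ gives the Markov property.

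To identify the pushed-forward measure with $\wh\mu_P$, I would verify the marginal $\wh\mu[s_0=k]=\pi_k$: the event $\{s_0=k\}$ decomposes over tower coordinates as the disjoint union of $\{x_0=i,\ j=i-k\}$ for $i\geq k$, each of $\wh\mu$--mass $p_i$, giving $\sum_{i\geq k}p_i=\pi_k$; comparison with the invariance relation $\pi_{k+1}=\pi_k-\pi_1 p_k$ (solved by the stated formula with $\pi_1=1$) confirms the match. Fubini then gives $\sum_k\pi_k=\sum_i i\, p_i=\int_0^\infty s\, \dd F(s)$, which is also the total $\wh\mu$--mass since the fiber over $\{x_0=k\}$ contributes $k$ copies of $\mu[x_0=k]=p_k$.

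Next, for (i) $\leftrightarrow$ (ii), the event variable $Y_n=\chi_B\circ T^n(\xx,j)$ equals $1$ iff $T^n(\xx,j)\in B$, iff $\phi(T^{n-1}(\xx,j))=1$, i.e.\ iff $s_{n-1}=1$; thus $(Y_n)$ is a continuous factor of $(s_n)$, with inverse $s_n=\min\{k\geq 1:Y_{n+k}=1\}$. Under the normalization $\pi_1=1$ the set $\{Y_0=1\}$ has $\mu_Y$--mass $1$, as required. For (ii) $\leftrightarrow$ (iii), the cumulative count of \eqref{eq:N_n} is a bijection $(Y_n)\leftrightarrow(\wt N_n)$ that conjugates the left shift on $\Pi_Y$ to $\Theta$: by the cocycle property, $\wt N_n\circ T=\wt N_{n+1}-\wt N_1=\Theta(\wt N)_n$.

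Finally, since gaps lie in $\N^*$, the return--time function $r(\xx)=x_0$ of the renewal flow in Proposition \ref{p:renewal flow} is integer--valued, so the flow space $X$ stratifies at integer heights into layers whose union is exactly the tower $\wh\Sigma$ of model (i), with the time--one map $h_1$ equal to $T$: indeed, $h_1(\xx,j)=(\xx,j+1)$ if $j<x_0-1$, and $h_1(\xx,x_0-1)$ crosses the identification to $(\sigma\xx,0)$. Hence $(X,\rho,h_t)$ is the suspension of $(\wh\Sigma,\wh\mu,T)$ with constant return time $1$, and its total mass agrees with $\sum_k\pi_k=\int s\, \dd F(s)$. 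The main obstacle is less any single estimate than the bookkeeping that keeps base, fiber coordinate, shift, event sequence, and cocycle simultaneously aligned across the four models; I would streamline this by treating (i) as the hub and relating each other model to it in turn.
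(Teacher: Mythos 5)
Your argument is correct and follows the same hub-and-spoke structure as the paper (the tower model as hub, the suspension argument for the flow, the cocycle identity for (ii)$\leftrightarrow$(iii), and the Fubini computation $\sum_k\pi_k=\sum_i i\,p_i$). The one genuine point of divergence is the tower--Markov-shift isomorphism. The paper proceeds at the level of measurable sets: it sends $\wt B_k=\{x_0=1,\,x_1=k+1\}$ to $B_k\times\{0\}$ and then argues that measures and return times match so the dynamics is conjugated. (As an aside, the stated indexing there, $x_1=k+1$, appears off by one: under the transition matrix $P$ that set has $\wh\mu_P$-measure $p_{k+1}$ and return time $k+1$ to $\{x_0=1\}$, whereas $B_k\times\{0\}$ has $\mu$-measure $p_k$ and return time $k$; with $x_1=k$ everything lines up.) You instead write down an explicit pointwise conjugacy via the countdown function $\phi(\xx,j)=x_0-j$, check the transition rule directly, and read the marginal $\wh\mu[s_0=k]=\sum_{i\geq k}p_i=\pi_k$ off the fiber decomposition. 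That buys you transparency: the intertwining is a one-line computation, the match of measures reduces to a marginal-plus-Markov-property check rather than a return-time bookkeeping argument, and the off-by-one pitfall simply does not arise (your convention $Y_n=1\iff s_{n-1}=1$ is a harmless shift of the paper's $Y_n=1\iff s_n=1$, and both yield the same invariant vector $\pi$). The paper's set-level phrasing is a bit more economical but less self-verifying. Everything else in your proposal --- the cocycle identity $\wt N_n\circ T=\wt N_{n+1}-\wt N_1$, the observation that integer-valued $r$ stratifies the special flow into a height-$1$ suspension over $(\wh\Sigma,\wh\mu,T)$, and the mass computation --- coincides with the paper's argument.
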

\begin{proof}

The renewal flow was
constructed above as a special flow over the Bernoulli  shift 
on $\Pi_{-\infty}^{+\infty}\R^+$ with independent product measure 
of the distribution $F$ and with return--time
function 
$r(\xx)=
x_0=k$. Since $F$ is supported on the integers, we can replace
$\Pi_{-\infty}^{+\infty}\R^+$ by
 $\Pi_{-\infty}^{+\infty}\N^*$, which is the  Bernoulli  shift space
 for the base of the tower model. Since the return time for the tower is the
same as for the flow, 
the flow is the supension flow of this map.
Now the mass of the suspension flow is on the one hand given in Proposition
\ref{p:renewal flow}, as
$\int_0^{\infty}s \, \d F(s)
$; on the other hand it  equals that of its base map (the tower model).

The renewal transformation $(\wh\Sigma, \wh\mu, T)$ is naturally isomorphic to 
the event process shift $(\Pi_Y, \mu_Y, \sigma)$, since the
correspondence 
$(\xx, k)\mapsto Y_i(\xx,k)$ is one--to-one, as cylinder sets of the event process
determine the 
return--time partition of the base which in turn defines the Bernoulli shift.

 For each $(\xx,k)$,
$\wt N_0=0$ and 
$\wt N_n= \Sigma_{i=0}^{n-1} Y_i$ for $n>0$,
$\wt N_n= \Sigma_{i=n}^{-1} Y_i$ for $n<0$.
This defines a map from  $\Pi_Y$ to $\Pi_{\wt N}$, which conjugates the 
 shift to the
increment shift and which
is a bijection as 
$ Y_n= \wt N_{n+1}- \wt N_n$ gives the inverse.

We define a map from the  renewal shift to the tower map, sending 
first the set $\wt B_k\equiv\{ x\in
\Pi:\, x_0= 1 \text{ and } x_1=k+1\}$ to $B_k\times \{0\}$; these  have the same
measure, $p_k$. Then $\{x: x_0=1\}= \cup_{k\geq 1} \wt B_k$ maps to the base
$\cup_{k\geq 1} B_k$, and both $\wt B_k$ and $B_k$ have the same
return time $k$  to $\wt B$,  $B$, hence the dynamics is conjugated.   The independence of the gaps (the Markov
property) is registered in the
Bernoulli property of the base map, so the measures correspond.
Choice of $\pi_1=1$ corresponds to the tower base having measure $1$.

We mention that the correspondence from the renewal shift to the event
process sends, e.g., the sequence $(\dots 154321321\dots)$ to $(\dots
100001001\dots)$, also clearly a bijection.

Now we have seen in the first paragraph that the 
mass of the tower model equals $\int_0^{\infty}s \, \d F(s)
$. On the other hand, it can be calculated 
by  adding up the 
measure of the column over each $B_k$, giving
$\wh\mu(\wh\Sigma)= \sum_{k=1}^\infty k\mu(B_k)= \sum_{k=1}^\infty
kp_k$. Calculated for the renewal shift,
the total mass is that of
its invariant vector: $\sum_{k=1}^\infty \pi_k= \sum_{k=1}^\infty \sum_{i=k}^\infty p_i=
\sum_{k=1}^\infty kp_k$. 
\end{proof}

\section{Order--two ergodic theorems }\label{s:log average ergodic}

\subsection{Order--two ergodic theorems for self--similar processes}\label{ss:logaverageErgodicThms}

We begin in the context of  a self--similar process, dual to a self--similar process with stationary increments.

\begin{theo}\label{t:logaverageself--sim}
For $\alpha>0$, let  $\nu$ be a probability measure on $ D_{0>}$, invariant and ergodic for the scaling flow $\tau_t$ of index $1/\alpha$ and also for the increment flow $\eta_t$. Let 
$\wh \tau_t$ denote the dual scaling flow of index $\alpha$ on $(\wh D_{0>}, \wh\nu)$, and 
$(\wh D_{0>},\wh \nu, \wh \eta_t)$  the increment subflow. We write
$\bbar\nu$ for the $\bbar \eta_t$- invariant measure  defined in the
first part of \S \ref{s:measuredualincrement}.
Let  $\Phi(\wh f,t) $
be a cocycle for the increment flow $\bbar\eta_t$which  is jointly
measurable, of local bounded variation in $t$, with $\I(\Phi)$
finite. Then for $\bbar \nu-$a.e.~$\wh f$
we have:
\begin{equation}\label{first}
\lim_{T\to\infty} \frac{1}{\log T} \int_1^T \frac{\Phi(\wh f,t)}{t^\alpha} \frac{\dd t}{t}=  c_0\;\I(\Phi),\end{equation}
with  $c_0\equiv  \int_{\wh D_{0>}}\wh f(1)\dd\wh\nu(\wh f)$. 
\end{theo}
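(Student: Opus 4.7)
The approach is to play two ergodic theorems against each other: the ordinary Birkhoff theorem, applied to the probability-measure-preserving scaling flow $\wh\tau_s$ to pin down the log average of the reference cocycle $\Phi_0(\wh f,t)\equiv \wh f(t)$, and the cocycle Hopf ratio ergodic theorem (Theorem \ref{t:CocycleBirkHopf}(ii)), applied to the conservative ergodic infinite-measure flow $\bbar\eta_t$ to compare $\Phi$ against $\Phi_0$.

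First I would handle the particular cocycle $\Phi_0(\wh f,t)= \wh f(t)$. Using $(\wh\tau_s \wh f)(1)= \wh f(e^s)/e^{\alpha s}$ together with the change of variable $t=e^s$, the Birkhoff theorem applied to $(\wh D_{0>},\wh\nu,\wh\tau_s)$ with $L^1$-observable $\varphi(\wh g)=\wh g(1)$ yields, for $\wh\nu$-a.e.~$\wh f$,
\begin{equation}\label{eq:stepone}
\lim_{T\to\infty}\frac{1}{\log T}\int_1^T \frac{\wh f(t)}{t^\alpha}\,\frac{\dd t}{t}= c_0.
\end{equation}
Next, invoking Theorem \ref{t:CocycleBirkHopf}(ii) on the conservative ergodic flow $(\wh D_{0>},\bbar\nu,\bbar\eta_t)$ (Corollary \ref{c:totalmass}, using that $(D_{0>},\nu,\eta_t)$ is ergodic by hypothesis) with numerator $\Phi$ and denominator $\Phi_0$, both of local bounded variation in $t$, with $\I(\Phi)$ finite by assumption and $\I(\Phi_0)=1$ by Proposition \ref{p:expectedvalofcocycles}, gives
\[
\lim_{T\to\infty}\frac{\Phi(\wh f,T)}{\wh f(T)}= \I(\Phi)\qquad \bbar\nu\text{-a.s.},
\]
the needed divergence $\wh f(T)\to +\infty$ being supplied by Proposition \ref{p:expectedvalue}(iv) since $\I(\Phi_0)=1>0$.

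To conclude, I would combine the two: fix $\varepsilon>0$ and, for $\bbar\nu$-a.e.~$\wh f$, pick $T_0=T_0(\wh f)$ so large that $|\Phi(\wh f,t)-\I(\Phi)\wh f(t)|\leq \varepsilon\,\wh f(t)$ for $t\geq T_0$, which is legal because $\wh f(t)>0$ for $t>0$ (as $0$ is a right point of increase for $\wh f\in\wh D_{0>}$). Splitting $\int_1^T=\int_1^{T_0}+\int_{T_0}^T$, the first piece contributes nothing in the log average, and the second satisfies
\[
\biggl|\frac{1}{\log T}\int_{T_0}^T \frac{\Phi(\wh f,t)}{t^\alpha}\frac{\dd t}{t}- \I(\Phi)\cdot\frac{1}{\log T}\int_{T_0}^T \frac{\wh f(t)}{t^\alpha}\frac{\dd t}{t}\biggr|\leq \frac{\varepsilon}{\log T}\int_{T_0}^T \frac{\wh f(t)}{t^\alpha}\frac{\dd t}{t}.
\]
By \eqref{eq:stepone} the two integrals on the right each tend to $c_0$, so passing to $\limsup_{T\to\infty}$ and then letting $\varepsilon\downarrow 0$ produces \eqref{first}. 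The main obstacle has already been shouldered in \S\ref{s:cocycles}, namely the Hahn-decomposition argument needed to push Hopf's theorem from functions to general local-BV cocycles that need not be generated by any single observable; once that machine is available, the present theorem reduces to a pointwise ratio limit combined with a routine log-density truncation.
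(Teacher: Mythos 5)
Your approach is structurally the same as the paper's: the Birkhoff ergodic theorem on the probability-preserving scaling flow handles the reference cocycle $\Phi_0(\wh f,t)=\wh f(t)$, and the cocycle Hopf ratio theorem (Theorem \ref{t:CocycleBirkHopf}(ii)) on the conservative ergodic infinite-measure flow $(\wh D_{0>},\bbar\nu,\bbar\eta_t)$ compares a general integrable local-BV cocycle $\Phi$ against $\Phi_0$. Your truncation-and-split step is the explicit version of what the paper compresses into the remark that, by the Hopf theorem, it suffices to verify the statement for one cocycle with nonzero integral.

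There is, however, a genuine gap, and it sits exactly where the paper devotes its closing paragraph. Your first step establishes $\frac{1}{\log T}\int_1^T \wh f(t)\,t^{-\alpha-1}\,\dd t\to c_0$ only for $\wh\nu$-a.e.~$\wh f$, but the Hopf ratio limit holds for $\bbar\nu$-a.e.~$\wh f$, and your concluding combination needs both statements for the \emph{same} $\wh f$. These two measures are not interchangeable: $\wh\nu$ is concentrated on $\wh D_{0>}$, the set of paths for which $0$ is a right point of increase, whereas a $\bbar\nu$-typical path lies outside $\wh D_{0>}$ (the Ambrose--Kakutani cross-section, which is contained in $\wh D_{0>}$, carries finite $\bbar\nu$-mass while the flow space carries infinite mass, and a generic translate $\bbar\eta_s\wh g$ has $0$ sitting in a flat stretch of $\wh g$). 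Your own parenthetical, that "$\wh f(t)>0$ for $t>0$ as $0$ is a right point of increase for $\wh f\in\wh D_{0>}$", applies a $\wh\nu$-typical feature to what you have just declared to be a $\bbar\nu$-typical $\wh f$; this is precisely the slip. What is needed, and what the paper supplies, is a transfer argument: one checks that the property "the log average of $\wh f(t)/t^\alpha$ equals $c_0$" is invariant along $\bbar\eta_s$-orbits (for fixed $s$, $\wh f(t+s)-\wh f(s)$ and $\wh f(t)$ have the same log average since $\wh f(s)/t^\alpha\to 0$ and $t^\alpha\sim(t\pm s)^\alpha$), and then uses that the finite-measure flow $(\wh D_{0>},\wh\nu,\wh\eta_t)$ and the infinite-measure flow $(D_{0\geq},\bbar\nu,\bbar\eta_t)$ share a common cross-section with the same base measure, so that a $\wh\nu$-full, orbit-invariant set is automatically $\bbar\nu$-full. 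Once that transfer is added your argument is complete.
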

\begin{proof}
By Corollary
\ref{c:totalmass}, since $\wh\eta_t$ is ergodic, 
$(\wh D_{0>},\bbar \nu, \bbar \eta_t)$ is conservative ergodic, with
total  mass $\bbar \nu(\wh D_{0>} )= \int_{D_{0>}} f(1) \d\nu(f)$.
Hence from the Hopf Theorem \eqref{hopf}, it is enough to 
prove \eqref{first} for some specific cocycle $\Phi$ over $\bbar \eta_t$ with  $\I(\Phi)
\neq 0.$ We choose 
$$\Phi(\wh f,t)\equiv \wh f(t).$$ Using Propositions \ref{p:is a cocycle}  and \ref{p:expectedvalofcocycles},  $\Phi$ is a cocycle over $\bbar \eta_t$, with integral
$\I(\Phi)
=1$.

\smallskip

First we prove \eqref{first}  for $\wh\nu$--a.e.~$\wh f$, i.e.~with
respect to this finite measure.  For our choice of $\Phi$, after a
change of variables, we have:
$$
\frac{1}{R} \int_0^R \frac{\wh f(e^u)}{e^{\alpha u}} \, \d u= \frac{1}{R} \int_0^R (\wh\tau_u \wh f) (1) \, \d u=
\frac{1}{R} \int_0^R \varphi  (\wh\tau_u \wh f) \, \d u,
$$
\noindent 
where $\varphi$ is the observable $\varphi(\wh f)\equiv \wh f(1);$
this is in $L^1(\wh D_{0>},\wh\nu)$ since
$$
\e(\varphi)= \int_{\wh D_{0>}}\varphi(\wh f) \, \d\wh\nu(\wh f)=
\int_{\wh D_{0>}} \wh f(1) \, \d\wh\nu(\wh f)=c_0
.$$ Accordingly, by the  Birkhoff Ergodic Theorem applied to $\wh\tau_t$, we have \eqref{first} for $\wh\nu$--a.e.~$\wh f$.

\smallskip

We shall prove that \eqref{first} also holds for $\bbar\nu$--a.e.~$\wh
f$. To this end, we claim that if \eqref{first} holds for some $\wh
f$, then it is true for all other points in the $\bbar \eta_t$-orbit
of $\wh f$. Indeed, for fixed $s\in\r$,  it is easily checked that
$\wh f(t)/(t-s)^{\alpha}$ and  $\wh
f(t)/t^{\alpha}$ share the same log average,  which is by hypothesis
$c_0\cdot \I(\Phi)$, hence
equivalently $\bbar\eta_s \wh f( t)/t^{\alpha}$ and $\wh
f(t)/t^{\alpha}$. 

Since the two flows
$(\wh D_{0>},\wh\nu,\wh\eta_t) $, $(\wh D_{0>},\bbar\nu,\bbar\eta_t) $
share a common cross--section, 
this yields \eqref{first} for $\bbar \nu$--a.e.~$\wh f$ and finishes the proof of Theorem \ref{t:logaverageself--sim}.\end{proof}

\subsection{The increment and renewal flows as stable horocycle flows}\label{ss:stable manifolds}

\smallskip

We have seen in Proposition \ref{p:commreln} that the scaling and increment 
flows obey the same commutation relation as  the geodesic and stable horocycle flows 
$g_t, h_t$ of a surface of constant negative curvature; here, making
use of the results of the last section, weakening
the definition of stable manifold allows us to make this analogy precise, whilst providing
tools needed for the proof 
of Theorem \ref{t:logaveragetheorem}.

\begin{defi}
Given a flow $g_t$ on a metric space $(X,d)$, preserving a measure
$\nu$, we shall call a
{\em stable horocycle flow} any flow $h_t$ 
whose orbit of   $\nu$--a.e.~$x\in X$ belongs to the $g_t$--stable manifold
$W^s(x)$. 
We say $h_t$ is a {\em Ces\`aro--average stable 
horocycle flow}
if the orbit of $\nu$--a.e.~$x\in X$ belongs to $W^s_{CES}(x)$ (as defined in
 Theorem 
\ref{t:main2}).
\end{defi}

Parts $(i)$ and $ (iv)$ of the next proposition show the increment flow of the
Mittag--Leffler process is a  
Ces\`aro--average horocycle flow for the scaling flow,
while parts $(ii), (iv)$ show something similar for the renewal flow. Part 
$(iii)$ is a related statement   which we use in  the proof
of Theorem \ref{t:logaveragetheorem}:
\begin{prop}\label{p:stable manifolds}(Increment and renewal flows as
  Ces\`aro--average horocycle flows for $\wh \tau_t$)
\item{(i)} The orbit 
$\{\bbar\eta_r \wh Z\}_{r\in\r}$ is in $W^s_{CES}(\wh Z)$ as for $\wh \nu$--a.e.~path $\wh Z$ then 
for any $r\in \r$ fixed, 
\[
\lim_{T\to\infty}\frac{1}{T} \int_0^T |{\wh\tau}_t {\wh Z} (1) - {\wh \tau}_t (\bbar\eta_r {\wh Z})(1)| \; \dd t = 0.
\]
\item{(ii)}  With respect to the joining $\wh \mu$ of Theorem \ref{t:main2}, for a.e.~pair 
$(\wh Z, \bbar N)$, 
the orbit $\{\bbar\eta_r (h\circ \bbar N)\}_{r\in\r}$ is a subset of  $W^s_{CES}(\wh Z)$ since for $\wh \mu$--a.e.~path $h\circ \bbar N$, then
for any $r\in \r$ fixed, 
\[
\lim_{T\to\infty}\frac{1}{T} \int_0^T |{\wh\tau}_t {\wh Z} (1) - {\wh \tau}_t (\bbar\eta_r (h\circ \bbar N))(1)| \; \dd t = 0.
\]
\item{(iii)} Moreover, for $\bbar
\mu$--a.e.~path $\bbar N$, we have for each $r\in\r$ fixed:
\[
\lim_{T\to\infty} \frac{1}{\log T} \int_1^T |\bbar\eta_r(\bbar N) (t) -  \bbar N(t)| \; \frac{\dd t}{\wh a(t)\; t}= 0.\]
\item{(iv)} All the statements remain true with $|\cdot|$ replaced by
  $||\cdot||_{[0,1]}^\infty$. 

\end{prop}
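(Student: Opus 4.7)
My plan is to treat the four parts together, reducing each to a Birkhoff ergodic theorem argument for the scaling flow $\wh\tau_t$ on the probability space $(\wh D_{0>},\wh\nu)$, combined with $\wh\nu$--a.s.~continuity of Mittag--Leffler paths. The unifying tool is the commutation relation $\wh\tau_t\circ\bbar\eta_r=\bbar\eta_{e^{-t}r}\circ\wh\tau_t$ derived in \S\ref{s:dual flows}, which for each fixed $t$ converts the increment of $\wh\tau_t\wh Z$ by $r$ into an increment by the vanishingly small amount $e^{-t}r$.

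For part $(i)$, applying this commutation yields
\[
\wh\tau_t\wh Z(1)-\wh\tau_t(\bbar\eta_r\wh Z)(1)\;=\;W_t(1)-W_t(1+e^{-t}r)+W_t(e^{-t}r),
\]
where $W_t\equiv\wh\tau_t\wh Z$. Since $e^{-t}r\to 0$ and $\wh\nu$--a.s.~paths are continuous at $0$ and $1$, this integrand vanishes along each trajectory as $t\to\infty$; to upgrade to Cesaro convergence, I fix $\varepsilon>0$ and define the measurable set
\[
B_{\delta,\varepsilon}\;=\;\bigl\{\wh g\in\wh D_{0>}:\;\sup_{s\in\Q\cap[-\delta,\delta]}|\wh g(1)-\wh g(1+s)+\wh g(s)|>\varepsilon\bigr\}.
\]
A.s.~continuity gives $\wh\nu(B_{\delta,\varepsilon})\to 0$ as $\delta\to 0$. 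Once $t$ is large enough that $|e^{-t}r|<\delta$, the integrand is bounded by $\varepsilon$ off $\{W_t\in B_{\delta,\varepsilon}\}$ and by $3\,\wh\tau_t\wh Z(1+|r|)$ on this set, the latter being in $L^1(\wh\nu)$ by self-similarity. Applying Birkhoff's theorem to the dominator and to the indicator of $B_{\delta,\varepsilon}$, then letting $\delta\to 0$ and $\varepsilon\to 0$, yields $(i)$.

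Part $(ii)$ follows from $(i)$ by the triangle inequality: the new cross term, after the same commutation, is bounded by $2\,\|\wh\tau_t\wh Z-\wh\tau_t(h\circ\bbar N)\|^\infty_{[0,1+|r|]}$, and a logarithmic change of variables shows its Cesaro-in-$t$ average is controlled by Theorem \ref{t:main2} at the cost of a bounded factor of order $(1+|r|)^\alpha$. For part $(iii)$, I would change variables $u=\log t$ and extend the uniform approximation \eqref{hN} to $[0,1+|r|]$ by the same scaling trick, reducing $(\bbar N(e^u+r)-\bbar N(e^u))/\wh a(e^u)$ to $\wh\tau_u\wh Z(1+re^{-u})-\wh\tau_u\wh Z(1)$ up to an error whose Cesaro-in-$u$ average vanishes (using the bound $\bbar N(e^u+r)/\wh a(e^u)\le C\,\wh\tau_u\wh Z(1+|r|)+o(1)$ for uniform integrability). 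The commutation rewrites this as $\wh\tau_u(\bbar\eta_r\wh Z)(1)-\wh\tau_u\wh Z(1)+\wh\tau_u\wh Z(re^{-u})$: the first difference is handled by $(i)$, the last piece equals $\wh Z(r)/e^{\alpha u}\to 0$, and the constant $\bbar N(r)/\wh a(t)$ contributes negligibly because $\int_1^\infty dt/(\wh a(t)\,t)<\infty$ (regular variation of positive index).

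Part $(iv)$ runs through unchanged with $|\cdot|$ replaced by $\|\cdot\|^\infty_{[0,1]}$: the continuity set $B^{\sup}_{\delta,\varepsilon}$, defined analogously but with an inner supremum over $x\in[0,1]$, still has $\wh\nu$-measure tending to $0$ because $\wh\nu$--a.s.~paths are uniformly continuous on any compact interval; the dominating function becomes $3\,\wh\tau_t\wh Z(2+|r|)$; and Theorem \ref{t:main2} is already formulated in sup norm, so the comparison step in $(ii)$--$(iii)$ needs no change. The main technical obstacle, common to all four parts, is the uniform integrability needed to pass from pointwise-in-$t$ vanishing of the integrand (for each fixed continuous realization) to vanishing of the $L^1$ Cesaro average; this is what forces the splitting by $B_{\delta,\varepsilon}$ and the Birkhoff-plus-domination step that underlies $(i)$.
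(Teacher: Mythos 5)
Your proposal is correct but takes a genuinely different route from the paper's. For part $(i)$, the paper bounds the integrand by $e^{-\alpha t}\bigl(\wh Z(e^t+r)-\wh Z(e^t)\bigr)+e^{-\alpha t}\wh Z(r)$ and then, after a logarithmic change of variables, observes that the log averages $\frac{1}{\log T}\int_{T_0}^T \wh Z(t+r)\,t^{-(\alpha+1)}\,\dd t$ and $\frac{1}{\log T}\int_{T_0}^T \wh Z(t)\,t^{-(\alpha+1)}\,\dd t$ both converge to $\e(\wh Z(1))$ (the second by Birkhoff applied to $\wh\tau_t$, the first by a change of variables together with $t^\alpha\sim(t+r)^\alpha$), so the difference has log average zero — a pure linearity-of-limits argument needing only monotonicity of $\wh Z$ and finiteness of $\e(\wh Z(1))$. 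You instead split the time axis according to whether $\wh\tau_t\wh Z$ lies in a ``bad'' set $B_{\delta,\varepsilon}$, and then invoke $\wh\nu$-a.s.~continuity of Mittag--Leffler paths (so that $\wh\nu(B_{\delta,\varepsilon})\to 0$ as $\delta\to 0$) plus a Birkhoff-and-domination step. Your route requires path continuity, which the paper never uses. Similarly, for $(iii)$ the paper simply combines Corollary \ref{c:ChungErd} with the regular variation $\wh a(t-r)\sim\wh a(t)$, whereas you pass back through the log-density {\em asip} estimate \eqref{hN} over an enlarged interval and then through part $(i)$; this works but needs the uniform-integrability bound \eqref{furst} from the proof of Corollary \ref{c:ChungErd} to handle the log-density-zero exceptional set, which you allude to but do not spell out.

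Two small imprecisions are worth flagging. First, the sentence ``this integrand vanishes along each trajectory as $t\to\infty$'' is neither justified nor, most likely, true: while $e^{-t}r\to 0$, the rescaled path $W_t=\wh\tau_t\wh Z$ itself changes with $t$, and the $\limsup_{t\to\infty}\bigl(W_t(1+e^{-t}r)-W_t(1)\bigr)$ need not be zero a.s.\ — if it were, the $B_{\delta,\varepsilon}$ machinery would be redundant. Fortunately your subsequent argument does not rely on pointwise vanishing, only on the smallness in Ces\`aro density of the bad times, so this is a misstatement rather than a gap. Second, ``applying Birkhoff's theorem to the dominator and to the indicator'' separately does not control $\frac{1}{T}\int_0^T\chi_{B_{\delta,\varepsilon}}(\wh\tau_t\wh Z)\, g(\wh\tau_t\wh Z)\,\dd t$, since the Ces\`aro average of a product is not the product of the averages; you should apply Birkhoff to the single observable $\chi_{B_{\delta,\varepsilon}}\cdot g$ (whose space average $\int_{B_{\delta,\varepsilon}}g\,\dd\wh\nu\to 0$ as $\delta\to 0$ by absolute continuity of the integral of $g\in L^1$), or else use Cauchy--Schwarz with Birkhoff for $\chi_{B_{\delta,\varepsilon}}$ and for $g^2\in L^1(\wh\nu)$, which holds since $\wh Z(1)$ has moments of all orders. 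With either fix your argument goes through.
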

\begin{proof}
We know that $\Phi(\wh Z, t)\equiv \wh Z(t)$ defines a
cocycle over the increment flow $\bbar\eta_t$.  Now since $\wh Z$ is
nondecreasing, for fixed $r$ and large $t$ we have
 \[|\wh \tau_t (\bbar\eta_r \wh Z)(1)-\wh\tau_t \wh Z (1)|= e^{-\alpha
 t}|\wh Z
 (e^t+r)-\wh Z(r)-\wh Z(e^t)|\leq e^{-\alpha t}(\wh Z
 (e^t+r)-\wh Z(e^t) )+ e^{-\alpha t} \, \wh Z (r).\]
So by a logarithmic change of variables, proving $(i)$ amounts to
checking that for large $T_0$ 
\[
\lim_{T\to\infty}\frac{1}{\log T} \int_{T_0}^T  (\wh Z(t+r) -\wh Z
(t))\; \frac{\d t}{t^{\alpha +1}}=0, \;\;\;\;\wh\nu-{\text{a.s.}}\]
\noindent
Following the reasoning at the end of the proof of Theorem
\ref{t:logaverageself--sim}, this holds true.
 
\smallskip
For part ($ii)$, as above, we have that
for fixed $r$ and for large $t$:
\[
|(\bbar\eta_r  (h\circ \bbar N))(t)- \wh Z(t) | \leq 
|h\circ \bbar N(t+r) - \wh Z(t+r)|+ \wh Z(t+r)-\wh Z(t) +h\circ \bbar N (r).
\]
Using \eqref{main} and then following exactly the strategy used in
the proof of $(i)$,  one shows $(ii)$. 

\smallskip

The proof of part $(iii)$ follows the same
pattern as that of $(ii)$: from Corollary \ref{c:ChungErd}, the log average of $\bbar N(t)/\wh
a(t)$ equals $\e(\wh Z(1))$ and,  by regular variation $\wh
a(t-r)\sim \wh a(t)$ for $t$ large.

Lastly, as in the proof of Theorem \ref{t:main2}, these  results also
hold for $||\cdot||_{[0,1]}^\infty$: since $h(\cdot)$, $\bbar N$ and $\wh Z$ are nondecreasing, 
the same proofs go through.
\end{proof}

\subsection{Order--two ergodic theorems for the Mittag--Leffler and
  renewal flows and the renewal transformation: proofs of Theorem \ref{t:logaveragetheorem} and
  Corollary \ref{c:logaveragerentf}}\label{ss:proofofthm}

\begin{proof}[Proof of Thm.~\ref{t:logaveragetheorem}] Part $(i)$ follows
  directly from Thm.~\ref{t:logaverageself--sim}, since from
  Prop.~\ref{p:nextcommreln} the two--sided  Mittag--Leffler process is
  dual to an ergodic self--similar process with stationary ergodic increments. 
 
\smallskip

For part $(ii)$, 
part of the strategy for the proof  of Thm.~\ref{t:logaveragetheorem} still applies. First, by the Hopf theorem, it is sufficient to prove the statement for a specific cocycle. We choose
$\Phi(\bbar N, t)= \bbar N(t)$. For $\wh\mu-$a.e.~$\bbar N$,  statement \eqref{e:logAv} holds
by Corollary \ref{c:ChungErd}.
Part $(iii)$ of Prop.~\ref{p:stable manifolds} then shows the same statement is true for any path in the 
$\bbar\eta_t$- orbit of $\bbar N$. Now since the flows $\bbar\eta_t$ and $\wh\eta_t$ 
share a common cross--section, having the statement for a.e.~point $\bbar N$ with respect to the finite measure $\wh\mu$
implies this for the infinite measure $\bbar\mu$ as well.
We are done with the proof of \eqref{e:logAv} and hence of Thm.~\ref{t:logaveragetheorem}.
\end{proof}

Next we move to discrete time and  the:
\begin{proof}[Proof of Corollary \ref{c:logaveragerentf}]  
By the Markov property,
the return times to the set $A$ under the shift map are 
 i.i.d. Therefore, for $F$  the   distribution function of the return times, 
$N_n$ is a renewal process; indeed $N_n= \bbar N(n)$
where $\bbar N(t)$ is the continuous-time renewal process for the gap distribution $\mu_F$.

We  prove
  that $(a)$ is equivalent to $(b)$. That $F$ is in the domain of
  attraction of $G_{\alpha}$ is equivalent to $1-F$ regularly varying of index 
$-\alpha$. This, in turn, is equivalent to saying that the renewal function
$U(t)=\sum_{k\geq 0} F^{k\star}(t)=\e(\bbar N(t)+1)$, and hence the return sequence
$(\bbar a_n)$, is regularly varying of index  $\alpha$;
see p.~361 of 
\cite{BinghamGoldieTeugels87}. 

\smallskip

Proof of $(i)$:  Assuming $(b)$,  the function $\varphi=\chi_{A}$
generates a cocycle $\Psi(x,n)$ by equation 
\eqref{e:cocyclesum}.
Applying  
$(ii)$ of
Theorem \ref{t:logaveragetheorem} to $\Psi(x,t)$, 
$(i)$ of the corollary holds for $\Psi(x,n)$. By the  Hopf Ratio
Ergodic Theorem this then passes  to any
other cocycle  over the transformation.

For an alternative argument, the process $Y_i$ is a factor of the Markov
shift, and  is the event process
model of a renewal transformation. The renewal shift model is a
countable state Markov chain, so the argument just given applies to
prove 
the order--two ergodic theorem  for the renewal
transformation. Now via the Ratio
Ergodic Theorem this fact lifts to any conservative ergodic  map which
factors onto it, in  particular,  to the Markov shift. (More generally
this passes on to any {\em similar} transformation; see Corollary 6 of \cite{Fisher92}).

The convergence of the Chung-Erd\"os averages of part $(ii)$ are
equivalent to the log averages of part $(i)$ in this case, as mentioned in Proposition 1 of \cite{AaronsonDenkerFisher92}.

\medskip

We now prove $(iii)$. Setting $\wh a_n\equiv\frac{1}{1-F(n)}$, Feller
\cite{Feller49} (Thm.~7) proved
that $N_n/\wh a_n$ converges in law to a Mittag--Leffler distribution ${\cal M}_{\alpha}$. Again, from p.~361 of \cite{BinghamGoldieTeugels87}, we know
that $\bbar a_n\sim {\mathtt c}\, \wh a_n$, where
${\mathtt c}=(\Gamma(1-\alpha)\Gamma(1+\alpha))^{-1}=\e(\wh Z(1))$;
see  (7.2) on p.~113 of \cite{Feller49} for the value of $\e(\wh
Z(1))$. 

Thus
$N_n/\bbar a_n$ converges to a rescaled Mittag--Leffler
distribution of index $\alpha$, of mean  1.

\smallskip

 Now, $F$
belongs to the domain of attraction of $G_{\alpha}$ so $S(n)/a(n)
\stackrel{law}{\longrightarrow} G_{\alpha}$, where 
$S(\cdot)$ is the polygonal interpolation extension of $(\bbar S_n)$. Setting $N\equiv S^{-1}$
and $\wh a\equiv a^{-1}$, $\forall x>0$ we have $\P(S(n)\leq x
a(n))=\P(N(xa(n))\geq n)$ and $\P(N(y)/\wh a(y) \geq \wh a(y/x)/\wh
  a(y))\to G_{\alpha}(x)$ as $y\to \infty$, with $y\equiv x
a(n)$. As $\wh a$ is regularly
varying of index  $\alpha$, $\forall x\in (0,b], b>0$, $\wh a(y/x) / \wh a(y)$ converges uniformly
to $x^{-\alpha}$; see  p.~22 of
\cite{BinghamGoldieTeugels87}. So $N(n)/\wh a(n)
\stackrel{law}{\longrightarrow} {\cal M}_{\alpha}$, thus $\wh a
(n)\sim \wh a_n$ which yields $\bbar a_n \sim {\mathtt c} \wh a (n)$.
\end{proof}

\begin{rem}Both the  renewal process
$N_n$ and the cocycle $\Psi(x,n)$, which defines the 
 increment shift model of the renewal
transformation, $\wt N_n$, occured in the proof, but 
these are not quite the same:
they differ by a time shift of the event process. 
Indeed, defining $\Psi^\circ(x,n)= \Psi(\sigma x, n)$, then
 $\Psi^\circ$ is also a
cocycle over the renewal transformation, and  from
\eqref{eq:N_n}, $N_n= \Psi^\circ(x,n)$. 
\end{rem}

\subsection{Identification of the constant $\mathtt c$: proof of Proposition \ref{prop1}}
\label{s:range}

We recall the notation regarding stable laws and processes from the
beginning of \S \ref{1}. The law $G_\alpha=G_{\alpha,
  1}$ has an  
especially nice Laplace transform:
\begin{equation}\label{c}
\e(e^{-wX})= \exp( -\check c_{\alpha} \, w^\alpha), \;\;\;\;\;  \check c_\alpha= \frac{\Gamma(3-\alpha)}{\alpha(1-\alpha)}.\end{equation}

\smallskip

Since the corresponding stable process $Z$ has increasing paths with a dense set of jump points,
then  $\wh Z$ has  a nowhere dense set $C_{\wh Z}$ of points of increase, 
with a flat stretch of $\wh Z$  (on the gaps of $C_{\wh Z}$) 
corresponding to each of the  jumps of $Z$. 

\smallskip

We recall the definition of 
Hausdorff $\varphi$--measure $H_\varphi$ (see \cite{Falconer85}, \cite{Mattila95}):
\begin{defi}
Let $\varphi\in D_{0>}(\R^+)$, with $d$ the Euclidean metric in $\r^n$.
A $\delta$-cover of $A\subseteq \r^n$ 
is a countable cover by subsets $E_i$ of diameter $|E_i|<\delta$.
Then
\[
H_\varphi(A)= \lim_{\delta\to0} \left(\inf_{\{E_i\}: \, \delta-cover\, of\, A}\sum_{i=1}^\infty \varphi(|E_i|)\right).\]
\end{defi}

We suppose the {\em gauge function} $\varphi$ is 
{\em regularly varying of index $\alpha$ at zero}; that is, for some 
$\alpha>0$,  for each $a>0$,
$\varphi(at)/\varphi(t)\stackrel{t \to 0} {\longrightarrow}a^\alpha$. 
The following scaling property, which expresses the self--similarity of the measure,  follows
from the regular variation of the gauge function, as noted in \cite{BedfordFisher92}: 
\begin{lem}\label{fi}
If $\varphi$ is regularly varying of index $\alpha$ at zero, then for every $a>0$,
\[H_\varphi(aA)= a^\alpha H_\varphi(A).
\]
\end{lem}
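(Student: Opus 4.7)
The plan is to exploit the obvious bijection between $\delta$-covers of $A$ and $a\delta$-covers of $aA$ given by $\{E_i\}\leftrightarrow\{aE_i\}$, and combine it with regular variation of $\varphi$ at zero. For $\delta>0$ and $B\subseteq\r^n$, write the usual $\delta$-approximation
\[
H_\varphi^\delta(B)=\inf\Big\{\sum_i\varphi(|E_i|)\colon \{E_i\}\text{ is a }\delta\text{-cover of }B\Big\},
\]
so that $H_\varphi(B)=\lim_{\delta\to 0}H_\varphi^\delta(B)$. Since scaling by $a>0$ is a bijection between $\delta$-covers of $A$ and $a\delta$-covers of $aA$, and $|aE_i|=a|E_i|$, we obtain
\[
H_\varphi^{a\delta}(aA)=\inf\Big\{\sum_i\varphi(a|E_i|)\colon \{E_i\}\text{ is a }\delta\text{-cover of }A\Big\}.
\]

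Next I fix $a>0$ and $\epsilon>0$, and use the hypothesis that $\varphi$ is regularly varying of index $\alpha$ at zero: there exists $\delta_0>0$ such that for every $t\in(0,\delta_0)$,
\[
(a^\alpha-\epsilon)\,\varphi(t)\leq \varphi(at)\leq (a^\alpha+\epsilon)\,\varphi(t).
\]
(Only pointwise convergence at the chosen scalar $a$ is needed, since the values $|E_i|$ in any $\delta$-cover all lie below the single threshold $\delta_0$.) Applied termwise inside the infimum above, for every $\delta<\delta_0$ this gives
\[
(a^\alpha-\epsilon)\,H_\varphi^\delta(A)\leq H_\varphi^{a\delta}(aA)\leq (a^\alpha+\epsilon)\,H_\varphi^\delta(A).
\]

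Letting $\delta\to 0$ (so that $a\delta\to 0$ as well, since $a>0$ is fixed) and using the definition of $H_\varphi$ yields
\[
(a^\alpha-\epsilon)\,H_\varphi(A)\leq H_\varphi(aA)\leq (a^\alpha+\epsilon)\,H_\varphi(A).
\]
Since $\epsilon>0$ is arbitrary, one concludes $H_\varphi(aA)=a^\alpha H_\varphi(A)$. The cases $H_\varphi(A)=0$ and $H_\varphi(A)=+\infty$ are handled by the same inequalities (choosing $\epsilon<a^\alpha$ to keep the lower constant positive). There is no real obstacle here; the only point requiring care is the bookkeeping needed to line up the bijection of covers with the scaling of diameters and the regime $t<\delta_0$ in which the regular variation estimate is available.
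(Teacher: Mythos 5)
Your proof is correct. The paper itself does not supply a proof of this lemma (it simply cites \cite{BedfordFisher92}), but the argument you give is the standard one, and you have organized it cleanly: the scaling bijection between $\delta$-covers of $A$ and $a\delta$-covers of $aA$, the pointwise regular-variation estimate $\varphi(at)/\varphi(t)\to a^\alpha$ applied termwise for $t<\delta_0$, and then the double limit $\delta\to 0$, $\epsilon\to 0$. You are also right that only pointwise (in $a$) convergence is needed, not the uniform convergence theorem, since $a$ is fixed throughout. The only cosmetic remark is that singleton covering sets, where $|E_i|=0$, fall outside the range $(0,\delta_0)$ of the regular-variation bound, but since $\varphi(0)=0$ the termwise inequalities hold trivially there, so no harm is done.
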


\begin{defi}
Given a stochastic process $X(t)$ with paths in $D$, the {\em range} of $X$ is the 
set--valued process given by taking the image of intervals, as follows:
\[
R_X(t)=\left\{\begin{array}{ll}
X([0,t]) &\mbox{for $t\geq 0$}\\
X([t,0]) &\mbox{for $t< 0$.}
\end{array}\right.\]
\end{defi}

For $\alpha\in (0,1)$ we define 
\[\sbar Z(t)= \wh c_\alpha^{1/\alpha} Z(t),\;\;\;\; \text {where}\;\;\;\; \wh c_\alpha=1/\check c_{\alpha}\;\; \text {defined\, in} \, \eqref{c},\]
\[ 
\psi(t)= t^\alpha \, (\log \log \frac{1}{t})^{1-\alpha} , \, \text{and} \, \;\; c_\alpha= \frac{\wh c_\alpha}{\tilde c_\alpha} = \frac{\alpha^{1-\alpha}(1-\alpha)^\alpha}{\Gamma(3-\alpha)},\; \text{as}\, \text{in}\, \eqref{e:Hawkesgauge}.
\]

\begin{lem}\label{p:identifyingconstant}${(i)}$ (Hawkes) For $\nu$--a.e.~$Z$, for all $t\geq 0$, 
\[ 
H_\psi(R_{\sbar Z}(t))= \tilde c_\alpha \cdot t,\;\;\;\;\;\;\;{\text with}\;\;\; 
\tilde c_\alpha= \alpha^\alpha (1-\alpha) ^{1-\alpha}.
\]
\item{(ii)} We have, for $\wh\nu$--a.e.~$\wh Z$:
\[ 
 c_\alpha H_\psi(C_{\wh Z}\cap [0, T])= \wh Z(T).
\]
\end{lem}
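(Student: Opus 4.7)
My plan is the following.

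Part (i) is a classical result of Hawkes. The rescaling $\sbar Z = \wh c_\alpha^{1/\alpha} Z$ is chosen so that $\sbar Z$ has Laplace exponent $u^\alpha$ (compare \eqref{c}), i.e., $\sbar Z$ is the ``standard'' completely asymmetric stable subordinator for which Hawkes's computation of the Hausdorff $\psi$-measure of the range gives $H_\psi(R_{\sbar Z}(t)) = \tilde c_\alpha\, t$ with $\tilde c_\alpha = \alpha^\alpha(1-\alpha)^{1-\alpha}$. I would invoke this directly with a reference.

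For part (ii), I would reduce to (i) via the duality $\wh Z = \Cal I(Z)$ together with the self-similarity of $H_\psi$. First, I identify $C_{\wh Z}\cap[0,T]$ with the range $R_Z(\wh Z(T))$. Since $\wh Z$ is constant on each interval $[Z(s^-),Z(s))$ corresponding to a jump of $Z$ and strictly grows elsewhere, the set $C_{\wh Z}$ of points of increase of $\wh Z$ is the closure of $Z([0,\infty))$. Using the definition $\wh Z(T)=\inf\{s: Z(s)>T\}$, the sets $C_{\wh Z}\cap[0,T]$ and $R_Z(\wh Z(T))$ differ by at most one point (namely the jump value $Z(\wh Z(T))>T$ when $T$ falls in a gap of the range); such a point contributes zero $H_\psi$-measure since $\psi(0^+)=0$. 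Hence
\begin{equation*}
H_\psi(C_{\wh Z}\cap[0,T])=H_\psi(R_Z(\wh Z(T))).
\end{equation*}

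Next, $R_{\sbar Z}(t) = \wh c_\alpha^{1/\alpha}\, R_Z(t)$, so Lemma \ref{fi} applied to the index-$\alpha$ gauge $\psi$ yields $H_\psi(R_{\sbar Z}(t)) = \wh c_\alpha\, H_\psi(R_Z(t))$. Combining with part (i) gives $H_\psi(R_Z(t)) = (\tilde c_\alpha/\wh c_\alpha)\, t = t/c_\alpha$ by the definition of $c_\alpha$ in \eqref{e:Hawkesgauge}. Specializing to $t=\wh Z(T)$ and using the identification above produces exactly $c_\alpha H_\psi(C_{\wh Z}\cap[0,T])=\wh Z(T)$.

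The substantive content is concentrated in (i), which I would simply quote. The main remaining obstacle is the bookkeeping in identifying $C_{\wh Z}\cap[0,T]$ with $R_Z(\wh Z(T))$: one must check that the boundary points lost in passing between the two sets contribute nothing to $H_\psi$, which is immediate from $\psi(0^+)=0$, and that Hawkes's normalization produces exactly the constant $\tilde c_\alpha$ when translated to our notation via \eqref{c}. Both verifications are routine.
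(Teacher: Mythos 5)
Your argument is correct and is essentially the paper's proof: both invoke Hawkes's Theorem~2 after checking that $\sbar Z$ has Laplace exponent $u^\alpha$, both use the scaling Lemma~\ref{fi} for $H_\psi$, and both identify $C_{\wh Z}\cap[0,T]$ with the range of $Z$ run up to $\wh Z(T)$. The only cosmetic difference is the order of operations (you rescale $R_{\sbar Z}\to R_Z$ first and then substitute $t=\wh Z(T)$, while the paper works with $\wh{\sbar Z}$ throughout and rescales at the end), and you spell out the minor point that $C_{\wh Z}\cap[0,T]$ and $R_Z(\wh Z(T))$ agree only up to an $H_\psi$-null set, which the paper passes over tacitly.
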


\begin{proof} Using \eqref{c}, $\sbar Z$ satisfies 
\[
\e(e^{-\omega \sbar Z (1)})=\e(e^{-\omega\, \wh c_{\alpha}^{1/\alpha}  Z (1)})= \exp(-\omega^{\alpha}
\, \wh c_{\alpha}\, \check c_{\alpha})=e^{-\omega^{\alpha}},\]
and then part $(i)$ is Theorem 2 of \cite{Hawkes73}.
\smallskip

To deduce $(ii)$ from this, we denote by $\wh {\sbar Z}$  the generalized inverse of $\sbar Z$ so $R_{\sbar Z}( \wh {\sbar Z}(T))
= C_{\wh {\sbar Z}}\cap [0, T]$. On the other hand, as $\wh {\sbar Z}(t)= \wh Z(t/ \wh c_{\alpha}^{1/\alpha})$, $C_{\wh {\sbar {Z}}}= \wh c_{\alpha}^{1/\alpha}\, C_{\wh {Z}}$.

\smallskip

We then  set $t= \wh {\sbar Z}(T)$ and $(i)$ says that 
\[ 
H_{\psi}(R_{\sbar Z}( \wh {\sbar Z}(T))= H_\psi(C_{\wh{\sbar Z}}\cap [0, T])=  H_\psi( \wh c_{\alpha}^{1/\alpha}\, (C_{\wh {Z}}\cap [0, T/ \wh c_{\alpha}^{1/\alpha}]))= \tilde c_\alpha \, \wh{\sbar Z}(T)=\tilde c_\alpha \, \wh Z(T/ \wh c_{\alpha}^{1/\alpha}).\]

So $\forall T>0$, since $\psi$ is regularly varying of index $\alpha$
at zero, by Lemma \ref{fi} we prove $(ii)$:
\[
H_\psi( \wh c_{\alpha}^{1/\alpha}\, (C_{\wh {Z}}\cap [0, T]))= \wh c_{\alpha} H_\psi( C_{\wh {Z}}\cap [0, T])=\tilde c_\alpha \, \wh{\sbar Z}(T).\]
\end{proof}

We are now ready for the

\begin{proof}[Proof of Proposition \ref{prop1}] Since the scaling flow for $\wh Z$ is ergodic, 
$c_\alpha$ times 
the right order--two density at $x= 0$ is, using $(ii)$ and 
Birkhoff's ergodic
 theorem,
and defining $f\in L^1(\wh D_{0>}, \wh \nu)$ by $f(\wh Z)= \wh Z(1)$, 
\[
\begin{aligned}
\lim_{T\to\infty}\frac{1}{T}\int_0^T\frac{c_\alpha H_\psi(C_{\wh
    Z}\cap [x, e^{-s}])}{e^{-\alpha s}}\d s&=
\lim_{T\to\infty}\frac{1}{T}\int_0^T\frac{\wh Z( e^{-s})}{e^{-\alpha s}}\d s=
\lim_{T\to\infty}\frac{1}{T}\int_0^T(\wh \tau_{-s}\wh Z)(1)\d s=\\
\lim_{T\to\infty}\frac{1}{T}\int_0^Tf(\wh \tau_{-s}\wh Z)\d s&=
\int_{\wh D_{0>} }f\,  \d\wh \nu=   \e(\wh Z(1))\equiv \c=\frac{1}{\Gamma(1-\alpha)\Gamma(1+\alpha)}=\frac{\sin
\pi\alpha}{\pi \alpha},
\end{aligned}
\]
\noindent 
see \cite{BinghamGoldieTeugels87}, p.361. Thus for $\wh \nu$--a.e.~$\wh Z$, the right order--two density at $x= 0$
exists and equals $c_\alpha^{-1} {\mathtt c}$.
This is not enough: we want to show this holds at $H_\psi$--a.e.~$x$ in $C_{\wh Z}$. The proof is a 
Fubini's theorem argument.
Thus, writing 
  $\wh D_1\subseteq D$ 
for  the set of all $\wh Z$ such that the right order--two density
at $0$ equals $c_\alpha^{-1} {\mathtt c}$, 
we let $D_1$ denote the corresponding set of paths $Z= \wh Z^{-1}$.
We have just seen that 
$\wh \nu(\wh D_1)=1$, so equivalently $\nu(D_1)=1$. Without loss of generality 
we can take $D_1$ to be Borel measurable, since it contains a Borel subset also of full measure (by Prop.28, Chapter 12 of \cite{Royden68}).

From Proposition \ref{p:firstcommreln}, we know $\eta$ is jointly Borel measurable.
Now considering, for $T>0$, 
\[A_T=
\{(t, Z)\in [0,T]\times D:\, \eta_t Z\in D_1\}
;\] this is a measurable set for the 
product of the Borel $\sigma$--algebras, hence is measurable in the $(m\times \nu)$-completion  where $m$ is Lebesgue measure. Since $\eta_t$ preserves $\nu$ (Proposition \ref{p:nextcommreln}),
we know that for each $t\in [0,T]$, 
$\nu(\{Z:\, (t, Z)\in A_T\})=\nu(\eta_t^{-1}(D_1))=1$. By Fubini's theorem therefore, for $\nu$--a.e.~$Z$, 
$m(\{t:\, (t, Z)\in A_T\})=T$. 

Now  for $\nu$--a.e.~$Z$, 
$m$ pushes forward by $Z$ to the restriction  of $H_\psi$ to $C_{\wh Z}\cap
[0,T]$. Hence in conclusion for $\wh \nu$--a.e.~$\wh Z$,  
we know that for 
$H_\psi$--a.e.~$x$ in $C_{\wh Z}\cap [0,T]$ for any $T>0$, the right order--two density at $x$
exists and equals $c_\alpha^{-1} {\mathtt c}$. (Taking countable intersections over $T=1,2,\dots$, we can replace 
$C_{\wh Z}\cap [0,T]$ by  $C_{\wh Z} $).
\end{proof}

\begin{rem}\label{r:IntegerHausdorff meas}
The constant $\c$ is also 
the order--two density at $+\infty$ (rather than at $0$) both of $C_{\wh Z}$ and of the  
integer fractal set $\Cal O$ of renewal events. 
We  can think of this second limit as defining a  finitely additive Hausdorff measure:
 taking a long interval $[0,T]$, we cover the points in 
$\Cal O\cap [0,T]$ with intervals of length $1$ and then sum them up 
with regularly varying gauge function 
$\phi(r)= 1/\wh a(1/r)$;  the integer Hausdorff $\phi$--measure 
of $\Cal O$ is then equal to  $\c$.
\end{rem}

\enddocument